\documentclass[12pt]{amsart}

\usepackage{amsfonts}
\usepackage{amscd}
\usepackage{amssymb}

\allowdisplaybreaks
\usepackage{graphicx}
\usepackage{xcolor}
\usepackage{comment}
\usepackage{array}
\usepackage{tikz-cd}
\usepackage{cite}
\usepackage{enumerate,enumitem,hyperref,cleveref}
\hypersetup{colorlinks,linkcolor={blue},citecolor={blue},urlcolor={blue}}

\newtheorem{thm}{Theorem}[section]
\newtheorem{cor}[thm]{Corollary}
\newtheorem{lem}[thm]{Lemma}

\newtheorem{prop}[thm]{Proposition}
\newtheorem{defn}[thm]{Definitions}
\theoremstyle{remark}
\newtheorem{rem}[thm]{Remark}

\numberwithin{equation}{section}
\newtheorem*{theoremA}{{\bf Theorem A}}
\newcommand{\N}{{\bf N}}
\usepackage{color}

\title[Weighted variational inequalities for the fractional Dunkl ...]
{Weighted variational inequalities for the fractional Dunkl heat semigroup}

\author[S. Mukherjee]{Suman Mukherjee}

\address{Department of Mathematics, Indian Institute of Technology Bombay, Powai, Mumbai--400076, India.}
\email{sumanmukherjee822@gmail.com}

\keywords{Fractional Dunkl heat semigroup, Variational inequalities, Dunkl--Calder\'on--Zygmund operators, Cotlar-type inequality.}
\subjclass[2020]{Primary: 42B20, 42B25. Secondary: 35R11, 47A35}

\begin{document}

\begin{abstract} 
We investigate the convergence properties of the family of operators
$$
\mathcal{T}_{\bf N} f(x)=\sum_{j=N_1}^{N_2}
v_j\Bigl(
e^{-a_{j+1}(-\Delta_k)^s}f(x)
-
e^{-a_j(-\Delta_k)^s}f(x)
\Bigr), \qquad x\in\mathbb{R}^d,
$$
where $\{e^{-t(-\Delta_k)^s}\}_{t>0}$ denotes the fractional heat semigroup generated by the Dunkl Laplacian $\Delta_k$. Here ${\bf N}=(N_1,N_2)\in\mathbb{Z}^2$, $N_1<N_2$, the coefficients $\{v_j\}_{j\in\mathbb{Z}}$ form a bounded sequence of real numbers, and $\{a_j\}_{j\in\mathbb{Z}}$ is a monotone increasing sequence of reals. The primary objective of this work is to establish boundedness results for these differential transform operators on weighted $L^p(\mathbb{R}^d,d\mu_k)$ spaces as well as on Dunkl $\mathrm{BMO}(\mathbb{R}^d)$ spaces. We also establish analogous boundedness properties for the associated maximal operator
$$
\mathcal{T}^* f(x)=\sup_{\bf N} |\mathcal{T}_{\bf N} f(x)|
$$ and study the pointwise convergence of the corresponding series. In addition, we prove that, for compactly supported functions, the maximal differential transform operator $\mathcal{T}^*$ exhibits local behaviour comparable to that of classical singular integral operators.
\end{abstract}
\maketitle
\tableofcontents

\section{Introduction and main results} \label{Sec-intro} 

Let $(X,\Sigma,m,\boldsymbol{\tau})$ be a dynamical system, where $(X,\Sigma,m)$ is a complete nonatomic probability space and $\boldsymbol{\tau}:X\to X$ is an invertible ergodic transformation preserving the measure $m$. Consider an increasing sequence of non-negative integers $\{n_j\}_{j\in \mathbb{N}_0}$. For a measurable function $f$, define the corresponding ergodic averages by
$$
A_{j}f(x)=\frac1{n_j}\sum_{\ell=0}^{n_j-1}f(\boldsymbol{\tau}^\ell x).
$$
It is well known that no universal rate of convergence exists for ergodic averages. Discussions concerning this phenomenon may be found, for instance, in the books of Krengel and Petersen \cite{KrengelBook, PetersenBook}. Nevertheless, several quantitative estimates, including jump inequalities and upcrossing arguments, provide partial information regarding the mechanism of convergence. Observe that the series
$$
\sum_{j=1}^\infty \bigl(A_jf(x)-A_{j-1}f(x)\bigr)
$$
converges almost everywhere. Indeed, the partial sums telescope, leaving only the difference between the initial average and the final average, and the latter already converges. In contrast, absolute convergence may fail. More precisely, even for bounded functions one can have divergence of
$$
\sum_{j=1}^\infty
\left|
A_jf(x)-A_{j-1}f(x)
\right|.
$$
A sequence $\{n_j\}_{j\in \mathbb{N}_0}$ is called $\ell$-lacunary, for some $\ell>1$, if $n_{j+1}\geq \ell n_j$ for all $j\in \mathbb{N}_0$. A result of Akcoglu, Jones, and Schwartz \cite{AkcogluVIPEAA} have shown that if $\{n_j\}_{j\in \mathbb{N}_0}$ is $\ell$-lacunary  for some $\ell>1$, and $\rho<2$, then there exist bounded functions for which the $\rho$-variation
$$
\left(
\sum_{j=1}^\infty
|A_jf(x)-A_{j-1}f(x)|^\rho
\right)^{1/\rho}
$$
diverges almost everywhere. Furthermore, in the classical case $n_j=j$, the above absolute series diverges for every nonconstant function. Hence the cancellation present in the telescoping sum plays an essential role.
Motivated by this observation, it is natural to investigate weighted difference series of the form
$$
\sum_{j=1}^\infty
v_j\bigl(
A_jf(x)-A_{j-1}f(x)
\bigr),
$$
where $\{v_j\}_{j\in \mathbb{N}_0}$ is a bounded sequence of real or complex scalars. Using Rademacher functions together with the boundedness of the square function
$$
S(f)(x)
=
\left(
\sum_{j=1}^\infty
|A_jf(x)-A_{j-1}f(x)|^2
\right)^{1/2}
$$
on $L^p$, $1<p<\infty$, one obtains convergence for almost every random choice of coefficients $v_j\in\{\pm1\}$. This naturally raises the question of whether convergence continues to hold for every bounded sequence $\{v_j\}_{j\in \mathbb{N}_0}$. In this direction Jones and Rosenblatt \cite{JonesDAET}
proved the following result.
\begin{theoremA}
Let $\{n_j\}_{j\in \mathbb{N}_0}$ be an $\ell$-lacunary sequence of non-negative integers, where $\ell>1$; and let $\{v_j\}_{j\in \mathbb{N}_0}$ satisfy
$$
\|\{v_j\}\|_{\ell^\infty(\mathbb{N}_0)}\leq 1.
$$
Define
$$
\mathcal{T}_nf(x)
=
\sum_{j=1}^n
v_j\bigl(
A_jf(x)-A_{j-1}f(x)
\bigr).
$$
Then $\{\mathcal{T}_nf(x)\}_{n \in \mathbb{N}_0}$ converges almost everywhere for every $f\in L^p$, $1\leq p\leq\infty$. Moreover, if $1<p<\infty$, then $\mathcal{T}_nf$ converges in the $L^p$ norm.
\end{theoremA}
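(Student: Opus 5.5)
The plan is to transfer the problem, via Calder\'on's transference principle, to the integers $\mathbb{Z}$ with the shift and counting measure. There the averages become convolutions $A_jf=f*\chi_j$ with $\chi_j=n_j^{-1}\mathbf{1}_{[0,n_j)}$, and $\mathcal{T}_n$ becomes convolution with the kernel
$$K_n=\sum_{j=1}^n v_j(\chi_j-\chi_{j-1}).$$
It then suffices to prove two things. First, the maximal operator $\mathcal{T}^*f=\sup_n|\mathcal{T}_nf|$ is bounded on $\ell^p(\mathbb{Z})$ for $1<p<\infty$ and is of weak type $(1,1)$, with constants depending only on $\ell$. Second, $\mathcal{T}_nf$ converges a.e.\ on a dense class. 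The maximal inequalities transfer back to $X$, and the Banach principle then gives a.e.\ convergence on all of $L^p$, $1\le p<\infty$. Since $L^\infty\subset L^1$ on the probability space, this also covers $p=\infty$.

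For the dense class, take functions of the form $f=g+(h-h\circ\boldsymbol{\tau})$ with $g$ invariant and $h\in L^\infty$; these are dense by the mean ergodic theorem. Here $A_jg-A_{j-1}g=0$. For the coboundary part, $\|A_j(h-h\circ\boldsymbol{\tau})\|_\infty\le 2\|h\|_\infty/n_j$, and lacunarity makes $\sum_j n_j^{-1}<\infty$. So the series converges absolutely and uniformly.

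For the maximal bound on $\mathbb{Z}$, view $K(x)=\{K_n(x)\}_n$ as an $\ell^\infty$-valued kernel and treat $\mathcal{T}^*$ as a vector-valued singular integral, in three steps.
\begin{enumerate}
\item \emph{$\ell^2$ bound for a single $\mathcal{T}_n$.} On the Fourier side, $\widehat{\chi_j}(\xi)-\widehat{\chi_{j-1}}(\xi)$ is $O(\min(n_j|\xi|,(n_j|\xi|)^{-1}))$ up to $O(n_{j-1}/n_j)$ errors. Lacunarity then gives $\sum_j|\widehat{\chi_j}-\widehat{\chi_{j-1}}|\le C_\ell$ uniformly in $\xi$. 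Hence $|\widehat{K_n}|\le C_\ell$, uniformly in $n$ and in $\|v\|_\infty\le1$.
\item \emph{Smoothness.} $K_n$ satisfies the H\"ormander condition uniformly in $n$, indeed in the stronger form $\sum_{|x|>2|y|}\sup_n|K_n(x-y)-K_n(x)|\le C$. The reason is that $\chi_j(\cdot-y)-\chi_j$ is supported in two intervals of length $|y|$ near $0$ and $n_j$, with height $n_j^{-1}$. Summing over the lacunary $j$ with $n_j\gtrsim|x|$ gives a bounded total.
\item \emph{Passing from $\mathcal{T}_n$ to $\sup_n$.} For this I would prove a Cotlar-type inequality
$$\mathcal{T}^*f(x)\le C\bigl(M(\mathcal{T}_\infty^{(v')} f)(x)+Mf(x)\bigr),$$
where $M$ is the Hardy--Littlewood maximal function. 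Truncating at $n$ is comparable to smoothing at scale $n_n$: the tail $\sum_{j>n}$ is essentially $\chi_n * $(a full transform) plus an error controlled by $Mf$, thanks to the lacunary decay. Alternatively, one can bound $\mathcal{T}^*$ by $\ell^\infty$-valued Calder\'on--Zygmund theory once an $\ell^2(\ell^\infty)$ bound is obtained from the Cotlar step.
\end{enumerate}
Given these, standard Calder\'on--Zygmund decomposition yields weak $(1,1)$ and $\ell^p$ bounds for $1<p<\infty$.

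Finally, norm convergence for $1<p<\infty$ follows from a.e.\ convergence together with the domination $|\mathcal{T}_nf|\le\mathcal{T}^*f\in L^p$, via dominated convergence.

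The main obstacle is step (3), controlling the supremum over $n$ rather than a single $\mathcal{T}_n$. I would try first to prove the Cotlar inequality by writing $\sum_{j\le n}v_j(\chi_j-\chi_{j-1})=\chi_{n}*\bigl(\sum_j v_j\mathbf{1}_{j\le n}(\chi_j-\chi_{j-1})\bigr)+E_n$. The error $E_n$ would be estimated pointwise by $Mf$ using the summable bound $\sum_j\min(n_j/n_n,n_n/n_j)\le C_\ell$.
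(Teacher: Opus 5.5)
Your architecture is the right one. It matches what the paper does for its Dunkl analogue; the paper does not prove Theorem A itself but quotes it from Jones--Rosenblatt. The common ingredients are a uniform $L^2$ bound, uniform Calder\'on--Zygmund kernel estimates, a Cotlar-type inequality giving $L^p$ bounds for $\mathcal{T}^*$, $\ell^\infty$-valued Calder\'on--Zygmund theory for weak type $(1,1)$, and a dense class plus the maximal inequality for a.e.\ convergence. Your transference, the coboundary dense class, steps (1)--(2) and the dominated convergence step are fine.

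\textbf{The gap.} It is in step (3), where you claim that $E_n$ is bounded pointwise by $Mf$. (As written, your right-hand side contains $\chi_n*K_n$. This is $O(n_n^{-1}\mathbf{1}_{[0,2n_n)})$, so it cannot approximate $K_n$. Presumably you mean $\sum_{j>n}v_j(\chi_j-\chi_{j-1})=\chi_n*K_\infty+E_n$, so that $|\mathcal{T}_nf|\lesssim M(\mathcal{T}_\infty f)+|E_nf|$.)

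Because the $\chi_j$ are sharp cutoffs, for $j>n$ the kernel $e_j:=\chi_n*\chi_j-\chi_j$ has two pieces. One, on $[0,n_n)$, is harmless. The other is a boundary layer of height $\sim n_j^{-1}$ and width $n_n$ on $[n_j,n_j+n_n)$. Its contribution $n_j^{-1}\sum_{k\in[n_j,n_j+n_n)}|f(x+k)|$ is only $\lesssim Mf(x)$, with no factor $n_n/n_j$.

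Here is a concrete counterexample. Take $f=\sum_{j\in S}n_j\delta_{x+n_j}$, with $S$ a finite set of large even indices, and $v_j=(-1)^j$.
\begin{itemize}
\item On one hand, $Mf(x)\le \ell/(\ell-1)$.
\item On the other hand, the spike at $x+n_j$ is seen only by $e_j$, which enters $E_n$ with coefficient $\pm(v_{j+1}-v_j)$.
\item Since $e_j(n_j)=(1-n_n^{-1})/n_j$, each $j\in S$ contributes the same amount $\pm 2(1-n_n^{-1})$, so $|E_nf(x)|\approx 2\,\#S$.
\end{itemize}
The summable factor $\min(n_j/n_n,n_n/n_j)$ would require pointwise Lipschitz regularity of the kernel. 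The paper gets exactly this from the smoothness of the heat kernel in \eqref{smtness estimate last x changing}, but it fails for $n_j^{-1}\mathbf{1}_{[0,n_j)}$. These kernels satisfy only the integrated H\"ormander regularity of your step (2).

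\textbf{The remedy.} Use $M_qf$ with $q>1$ for these boundary layers. By H\"older, for an interval $I\subset[0,2n_j]$,
$$n_j^{-1}\sum_{k\in I}|f(x+k)|\lesssim (|I|/n_j)^{1/q'}M_qf(x).$$
The factor $(n_n/n_j)^{1/q'}$ is summable over $j>n$ by lacunarity. The remaining pieces of $E_n$ live on $[0,2n_n)$ with size $O(n_n^{-1})$, so $Mf$ suffices for them.

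This gives $\mathcal{T}^*f\lesssim M(\mathcal{T}_\infty f)+M_qf$, the same shape as Theorem \ref{cotlar ineq thm}. There, $\mathcal{M}_q$ enters only through the local term $A_2$, because the kernel is smooth; for sharp averages it is needed in the regularity term as well.

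You then get $\ell^p$ bounds only for $p>q$. Since $q>1$ is arbitrary, all $1<p<\infty$ follow. The weak type $(1,1)$ bound must then come from the $\ell^\infty$-valued Calder\'on--Zygmund theory, using your step (2) together with the $\ell^p(\ell^\infty)$ bound, as in the paper's proof of Theorem \ref{main thm for maximal op}\ref{T star 1}. With this change the rest of your argument goes through.
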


As these results exhibit strong connections among ergodic theory, harmonic analysis, and probability theory, they have motivated the study of the behaviour of variation operators associated with different classes of semigroups $\{{\bf T}_t\}_{t>0}$ of the form
$$
\sum_{j=1}^n
v_j\bigl(
{\bf T}_{a_j}f(x)-{\bf T}_{a_{j-1}}f(x)
\bigr),
$$
under suitable conditions on the sequence $\{a_j\}_{j \in \mathbb{N}_0}$.
Several related problems have been investigated in different settings. For instance, the $\rho$-variation of the heat semigroup in the Hermitian setting was studied in \cite{BetancorTPVOTHSITHS}. The boundedness of variation operators for the heat semigroups generated by Schr\"odinger operators was established in \cite{ChaoBODTFHSGBSO}. Weighted variation inequalities for differential operators and singular integrals were obtained in \cite{MaWVIFDOASI}. The boundedness of differential transforms for heat semigroups generated by the fractional Laplacian was considered in \cite{RenBODTFHSGBFL}, while the corresponding results for fractional heat semigroups generated by Schr\"odinger operators were proved in \cite{LiBODTFFHSGBSO}.
 \par Keeping the motivation arising from these developments in view, we now turn our attention to another framework in harmonic analysis. The theory developed by Charles F. Dunkl \cite{Dunkl} can be viewed as a nontrivial extension of classical Fourier analysis on Euclidean spaces, and over the years it has become an active area of research. A fundamental role in this theory is played by the Dunkl operators, which replace the ordinary directional derivatives appearing in the classical setting. 

\par An important second-order operator associated with this structure is the Dunkl Laplacian $\Delta_k$, defined by
$$
\Delta_k f(x)
=\Delta f(x)
+\sum_{\lambda\in R}k(\lambda)
\left(
\frac{\langle \nabla f(x),\lambda\rangle}{\langle \lambda,x\rangle}
-\frac{|\lambda|^2}{2}
\frac{f(x)-f(\sigma_\lambda x)}{\langle \lambda,x\rangle^2}
\right), \qquad x \in \mathbb{R}^d,
$$
where all symbols and conventions are introduced in Section \ref{Sec-prel}. This operator occupies a key position in the investigation of evolution equations and semigroup theory within the Dunkl setting. In particular, R\"osler considered in \cite{RoslerGHPATHEFDO} the Cauchy problem corresponding to the Dunkl heat equation
\begin{align*}
\begin{cases}
\partial_t u(x,t)-\Delta_k u(x,t)=0,
\qquad (x,t)\in \mathbb{R}^d\times (0,\infty),\\
u(x,0)=f(x).
\end{cases}
\end{align*}
It is known that the function $e^{t\Delta_k}f(x)$ provides a solution to the Dunkl heat equation, and the family $\{e^{t\Delta_k}\}_{t>0}$ forms the Dunkl heat semigroup. Motivated by the recent developments on variational inequalities associated with various semigroups, discussed earlier, the corresponding variational inequalities for this semigroup were studied in \cite{DasBODTFDHS}. More precisely, the authors considered the series
\begin{equation*}
\sum_{j\in \mathbb{Z}}
v_j
\Big(
e^{a_{j+1}\Delta_k}f(x)
-
e^{a_j\Delta_k}f(x)
\Big),
\end{equation*}
and established the boundedness of the partial sum operator on weighted $L^p(\mathbb{R}^d,d\mu_k)$ spaces and on Dunkl $\mathrm{BMO}(\mathbb{R}^d)$ spaces. They also established analogous boundedness results for the associated maximal operator and obtained pointwise convergence results for the related series.

\par Now, in analogy with the classical setting, one may also define fractional powers of the Dunkl Laplacian. Specifically, for each $s>0$, these are introduced through the Dunkl transform by
$$
\mathcal{F}_k\big((-\Delta_k)^s f\big)(\xi)
=
|\xi|^{2s}\mathcal{F}_k(f)(\xi).
$$
Consequently, when $0<s<1$, one obtains the semigroup
$\{e^{-t(-\Delta_k)^s}\}_{t\geq 0}$,
which is referred to as the fractional Dunkl heat semigroup. See Section \ref{subsec-fractional Dunkl heat} for details.
\par The fractional Dunkl heat semigroup has recently attracted considerable attention in Dunkl analysis. For instance, it has been employed in the study of sharp Dunkl multiplier theorems \cite{SumanOTBODM}, while the asymptotic behaviour of solutions associated with the fractional Dunkl heat equation has been investigated in \cite{SumanLTBOFFHEAWTDL}. Moreover, as in the classical setting, this semigroup is expected to play an important role in the study of the linear part of solutions to several fluid equations arising in mathematical physics, such as the generalized Navier--Stokes equations, the quasi-geostrophic equations, and the magnetohydrodynamic equations associated with the Dunkl operators. In probability theory, it may also be used to describe certain classes of Markov processes with jumps.

\par Motivated by the discussion at the beginning, it is therefore natural to investigate the properties of the series
\begin{equation}\label{1st equation}
\sum_{j\in \mathbb{Z}}
v_j
\Big(
e^{-a_{j+1}(-\Delta_k)^s}f(x)
-
e^{-a_j(-\Delta_k)^s}f(x)
\Big),
\end{equation}
where $0<s<1$, the sequence $\{v_j\}_{j\in\mathbb{Z}}$ is bounded and real-valued, and $\{a_j\}_{j\in\mathbb{Z}}$ is an increasing sequence of positive real numbers. To study the behaviour of the series \eqref{1st equation}, we consider the maximal operator defined by
$$\mathcal{T}^* f(x)= \sup\limits_{\N=(N_1, N_2)\in \mathbb{Z}^2} \big|\mathcal{T}_{\N} f(x)\big|,$$
where $\mathcal{T}_{\N}$'s are the partial sum operators given by 
$$\mathcal{T}_{\N}f(x)=\sum\limits_{j=N_1}^{N_2} v_j \Big(e^{-a_{j+1}(-\Delta_k)^s}f(x) - e^{-a_j(-\Delta_k)^s} f(x)\Big), \quad 0<s<1.$$
We will study the boundedness of the maximal operator in different weighted function spaces when the sequence $\{a_j\}_{j\in \mathbb{Z}}$ is a \emph{lacunary sequence}. Recall that, a sequence $\{a_j\}_{j\in \mathbb{Z}}$ is called $\ell$-lacunary, for some $\ell>1$, if $a_{j+1}\geq \ell a_j$ for all $j\in \mathbb{Z}$. Our results regarding the weighted boundedness of $\mathcal{T}^*$ are as follows.

\begin{thm}\label{main thm for maximal op}
Let $d\geq 2,\ell>1$, and $\{a_j\}_{j\in \mathbb{Z}}$ be an $\ell$-lacunary sequence. Then the following are true.
\begin{enumerate}[label=(\roman*)]
    \item \label{T star p} If $1<p<\infty$ and $w$ is a $G$-invariant weight such that $w\in A^k_p$, then for any $f\in L^p(\mathbb{R}^d, w\,d\mu_k)$ we have the strong type inequality
    $$\Big(\int_{\mathbb{R}^d}(\mathcal{T}^*f(x))^p w(x)\, d\mu_k(x)  \Big)^{1/p}\leq C\, \Big(\int_{\mathbb{R}^d}|f(x)|^p w(x)\, d\mu_k(x)  \Big)^{1/p},$$
    where $C=C(d, k, G, s, p, w, \|\{v_j\}\|_{\ell^\infty(\mathbb{Z})}, \ell)$. 

    \item \label{T star 1} If  $w$ is a $G$-invariant weight such that $w\in A^k_1$, then for any $f\in L^1(\mathbb{R}^d, w\,d\mu_k)$ we have the weak type inequality
    $$\sup\limits_{t>0}\Big(\int\limits_{\{\mathcal{T}^*f(x)>t\}} w(x)\, d\mu_k(x)  \Big)^{1/p}\leq C\, \int_{\mathbb{R}^d}|f(x)| w(x)\, d\mu_k(x)  ,$$
    where $C=C(d, k, G, s, w, \|\{v_j\}\|_{\ell^\infty(\mathbb{Z})}, \ell)$. 

    \item \label{T star l infty} For any $f\in L^\infty(\mathbb{R}^d)$ we have the boundedness
    $$\|\mathcal{T}^*f\|_{\text{BMO}_k} \leq C\, \|f\|_{L^\infty(d\mu_k)},$$
     where $C=C(d, k, G, s, \|\{v_j\}\|_{\ell^\infty(\mathbb{Z})}, \ell)$. 

     \item \label{T star BMO} For any $f\in \text{BMO}_{G,k}(\mathbb{R}^d)$ we have the boundedness
    $$\|\mathcal{T}^*f\|_{\text{BMO}_k} \leq C\, \|f\|_{\text{BMO}_{G,k}},$$
     where $C=C(d, k, G, s, \|\{v_j\}\|_{\ell^\infty(\mathbb{Z})}, \ell)$. 
    \end{enumerate}
\end{thm}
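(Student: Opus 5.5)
The strategy is the one used for differential transforms of classical and Dunkl heat semigroups. First show that each $\mathcal{T}_{\N}$ is a Dunkl--Calder\'on--Zygmund operator with constants uniform in $\N$. Then control $\mathcal{T}^*$ pointwise by maximal-type operators through a Cotlar-type inequality.

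Write $\mathcal{T}_{\N}f=f*_k K_{\N}$, where
$$K_{\N}(x,y)=\sum_{j=N_1}^{N_2}v_j\bigl(h^s_{a_{j+1}}(x,y)-h^s_{a_j}(x,y)\bigr)$$
and $h^s_t$ is the fractional Dunkl heat kernel. The argument then runs in four steps.

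\textbf{Step 1: kernel estimates.} Using the subordination formula that expresses $e^{-t(-\Delta_k)^s}$ through $e^{u\Delta_k}$, we transfer the known Dunkl heat kernel estimates (upper bounds in terms of $\mu_k(B(x,d(x,y)))$ and the orbit distance $d(x,y)$, plus H\"older regularity) to $h^s_t$. This gives
$$|h^s_t(x,y)|\lesssim \min\Bigl(\frac{1}{\mu_k(B(x,t^{1/2s}))},\ \frac{t}{\mu_k(B(x,d(x,y)))\,d(x,y)^{2s}}\Bigr),$$
together with the analogous bounds for $t\partial_t h^s_t$ and a smoothness estimate in $y$. We then write $h^s_{a_{j+1}}-h^s_{a_j}=\int_{a_j}^{a_{j+1}}\partial_t h^s_t\,dt$ and sum over $j$, using only $|v_j|\le\|v\|_\infty$. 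Lacunarity makes $\sum_j\int_{a_j}^{a_{j+1}}|\partial_t h^s_t|\,dt$ comparable to $\int_0^\infty|\partial_th^s_t|\,dt$, and the latter satisfies
$$|K_{\N}(x,y)|\lesssim \frac{1}{\mu_k(B(x,d(x,y)))}.$$
The same computation gives the H\"older-type bound, with an extra factor of $(|y-y'|/|x-y|)^{\delta}$ for $|y-y'|<d(x,y)/2$. All constants are independent of $\N$.

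\textbf{Step 2: uniform $L^2$ bound.} On the Dunkl transform side, $\mathcal{T}_{\N}$ has multiplier
$$m_{\N}(\xi)=\sum_j v_j\bigl(e^{-a_{j+1}|\xi|^{2s}}-e^{-a_j|\xi|^{2s}}\bigr).$$
Since the differences $e^{-a_j\lambda}-e^{-a_{j+1}\lambda}$ are nonnegative and telescope, $|m_{\N}|\le 2\|v\|_\infty$ uniformly. Plancherel for $\mathcal{F}_k$ then gives the uniform $L^2(d\mu_k)$ bound. By Steps 1 and 2, the $\mathcal{T}_{\N}$ are uniformly Dunkl--CZ operators. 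The weighted $L^p$ theory for $G$-invariant $A_p^k$ weights then gives uniform weighted strong and weak bounds, and the $L^\infty\to\mathrm{BMO}_k$ and $\mathrm{BMO}_{G,k}\to\mathrm{BMO}_k$ bounds, for each $\mathcal{T}_{\N}$.

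\textbf{Step 3: Cotlar-type inequality (the main obstacle).} We aim for
$$\mathcal{T}^*f(x)\lesssim \mathcal{M}_k\bigl(\mathcal{T}_{(-\infty,\infty)}f\bigr)(x)+\mathcal{M}_kf(x),$$
or the variant $\mathcal{M}_k(|\mathcal{T}_{\N_0}f|^q)^{1/q}$ with $q>1$. Here $\mathcal{M}_k$ is the Dunkl (orbit) Hardy--Littlewood maximal function and $\mathcal{T}_{(-\infty,\infty)}$ is the full sum, defined as the limit of the $\mathcal{T}_{\N}$.

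The proof proceeds as follows. Since $\mathcal{T}_{\N}=\mathcal{T}_{(N_1,\infty)}-\mathcal{T}_{(N_2+1,\infty)}$, it suffices to control the one-sided truncations. For fixed $N$, we split $f=f_1+f_2$ with $f_1=f\chi_{\mathcal{O}(B(x,a_N^{1/2s}))}$. The part $\mathcal{T}_{(N,\infty)}f_2$ behaves like a truncated singular integral. The difference $\mathcal{T}_{(-\infty,N-1)}f$ essentially reduces to $e^{-a_N(-\Delta_k)^s}f-\lim_{t\to0}e^{-t(-\Delta_k)^s}f$ plus tails with summable kernel, each bounded by $\mathcal{M}_kf$. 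We then average over a ball of radius $\sim a_N^{1/2s}$ around $x$, use the weak $(1,1)$ bound of $\mathcal{T}_{(-\infty,\infty)}$ on $f_1$, and use the kernel smoothness on $f_2$.

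The obstacle is the non-translation-invariant geometry. Kernel bounds are in terms of $d(x,y)$, and supports spread over $G$-orbits, so every localization must use orbits $\mathcal{O}(B)$ and the doubling of $\mu_k$. This is where the $G$-invariance of $w$ and $f$ is needed.

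\textbf{Step 4: conclusion.} For part \ref{T star p}, we combine the Cotlar inequality with the weighted boundedness of $\mathcal{M}_k$ (for $G$-invariant $A^k_p$ weights) and of the limit operator. For part \ref{T star 1}, we apply a Dunkl Calder\'on--Zygmund decomposition directly to $\mathcal{T}^*$, viewed as a vector-valued CZ operator into $\ell^\infty(\N)$ using the uniform kernel bounds. Parts \ref{T star l infty} and \ref{T star BMO} follow the standard argument. For a ball $B$, split $f=(f-f_{\mathcal{O}(B)})\chi_{\mathcal{O}(4B)}+(f-f_{\mathcal{O}(B)})\chi_{\mathcal{O}(4B)^c}+f_{\mathcal{O}(B)}$. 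The constant is annihilated because $\mathcal{T}_{\N}1=0$. The local piece is handled by the $L^2$ bound of $\mathcal{T}^*$. For the far piece, compare $\mathcal{T}^*f(y)$ with $\mathcal{T}^*f(x_B)$ using $|\sup_{\N}a_{\N}-\sup_{\N}b_{\N}|\le\sup_{\N}|a_{\N}-b_{\N}|$ and the uniform H\"older kernel estimate. This gives a dyadic-annuli sum bounded by $\|f\|_{\mathrm{BMO}_{G,k}}$, or by $\|f\|_\infty$ in part \ref{T star l infty}.
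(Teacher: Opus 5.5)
Your overall plan matches the paper's. You prove uniform Dunkl--Calder\'on--Zygmund bounds for $K_{\N}$ via subordination and the uniform $L^2$ bound via Plancherel. You then use a Cotlar-type inequality with weighted maximal bounds for (i), $\ell^\infty(\mathbb{Z}^2)$-valued CZ theory for (ii), and $|\sup_{\N}a_{\N}-\sup_{\N}b_{\N}|\le\sup_{\N}|a_{\N}-b_{\N}|$ for (iii)--(iv).

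The genuine error is in Step 3. There you assert that $\mathcal{T}_{(-\infty,N-1)}f$ reduces to $e^{-a_N(-\Delta_k)^s}f-\lim_{t\to0}e^{-t(-\Delta_k)^s}f$ plus tails, all bounded by $\mathcal{M}_kf$. The $v_j$ are arbitrary bounded coefficients, so $\sum_{j\le N-1}v_j\bigl(e^{-a_{j+1}(-\Delta_k)^s}f-e^{-a_j(-\Delta_k)^s}f\bigr)$ does not telescope; that only happens when the $v_j$ are constant. This small-time operator keeps every scale below $a_N^{1/2s}$ of a non-telescoping kernel, i.e.\ the whole local singularity near the orbit of the diagonal. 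So it is not pointwise dominated by $\mathcal{M}_kf$: such a bound would make it bounded on $L^\infty$, which fails e.g.\ for $v_j=(-1)^j$.

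What the Cotlar argument needs, and what is true, is a bound for $\mathcal{T}_{(-\infty,N-1)}f_2(z)$ only. Here $z$ lies in the averaging ball $B(x,c\,a_N^{1/2s})$ and $f_2=f\chi_{\mathcal{O}(B(x,a_N^{1/2s}))^c}$, so $d_G(z,y)\gtrsim d_G(x,y)\ge a_N^{1/2s}$ on the support. The mechanism is not telescoping but the time-derivative estimate of Proposition~\ref{esti for fractional Dunkl heat kernel}\ref{frac heat ii}:
\[
\sum_{j\le N-1}\bigl|\tau^k_zh_{a_{j+1},s}(-y)-\tau^k_zh_{a_j,s}(-y)\bigr|\le\int_0^{a_N}\Bigl|\frac{\partial}{\partial t}\tau^k_zh_{t,s}(-y)\Bigr|\,dt\lesssim\frac{a_N}{\mu_k\bigl(B(z,d_G(z,y))\bigr)\,d_G(z,y)^{2s}} .
\]
Summing over the annuli $d_G(x,y)\sim 2^ia_N^{1/2s}$ gives $\sum_{\sigma\in G}\mathcal{M}f(\sigma(x))$ with factors $2^{-2si}$; this is the paper's term $A_4$. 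The near part $f_1$ must go through the boundedness of the untruncated operator, never through $\mathcal{M}_kf$.

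Two further points in Step 3 need repair.

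First, the weak $(1,1)$ bound for $\mathcal{T}_{(-\infty,\infty)}$ does not control the plain average $\frac{1}{\mu_k(B)}\int_B|\mathcal{T}f_1|\,d\mu_k$. You can fix this in either of two ways:
\begin{enumerate}
\item Average $|\cdot|^\delta$ with $\delta<1$ and use Kolmogorov's inequality. This still yields $\mathcal{M}(\mathcal{T}f)$, since $\mathcal{M}(|g|^\delta)^{1/\delta}\le\mathcal{M}g$, and then no $A^k_{p/q}$ step is needed.
\item Follow the paper's term $A_2$ and use the uniform $L^q$ bound. This produces $\sum_\sigma\mathcal{M}_qf(\sigma(x))$, with $\mathcal{M}_q$ applied to $f$, not to $\mathcal{T}f$ as in your variant. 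You then choose $q$ with $w\in A^k_{p/q}$.
\end{enumerate}

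Second, working with $\mathcal{T}_{(-\infty,\infty)}$ and the infinite truncations $\mathcal{T}_{(N,\infty)}$ presupposes that the limit operator is defined, is itself a Dunkl--CZ operator, and that the tails converge. This can be arranged, for instance by taking the $L^2$-limit of the multipliers $m_{\N}$ and checking the limiting kernel. The paper avoids the issue by proving the inequality for $\mathcal{T}^*_M$ with $\mathcal{T}_{(-M,M)}$, uniformly in $M$, and then letting $M\to\infty$.

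Three smaller remarks:
\begin{itemize}
\item The paper's definition of a Dunkl--CZ kernel requires the factor $d_G(x,y)/|x-y|$ in the size bound. You get it from the factor $(1+|x-y|/\sqrt{u})^{-2}$ in the Dunkl heat kernel estimate, which you need anyway for your smoothness bound.
\item Lacunarity plays no role in Step 1, because the intervals $[a_j,a_{j+1}]$ are disjoint.
\item The $G$-invariance hypothesis concerns $w$, not $f$.
\end{itemize}
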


\begin{rem}
In the Dunkl setting, there are two distinct types of BMO spaces: one associated with the usual balls, denoted by $\mathrm{BMO}_{k}(\mathbb{R}^{d})$, and the other associated with the orbits of balls, denoted by $\mathrm{BMO}_{G,\,k}(\mathbb{R}^{d})$. Their definitions and the relationship between them are explained in Section \ref{subsec-BMO sp}. We have established the boundedness of the maximal operator from 
$L^{\infty}(\mathbb{R}^{d}, d\mu_k)
$ to $\mathrm{BMO}_{k}(\mathbb{R}^{d}),$
and from $\mathrm{BMO}_{G,\,k}(\mathbb{R}^{d})$ to $\mathrm{BMO}_{k}(\mathbb{R}^{d})$. The boundedness in the other direction  and from 
$L^{\infty}(\mathbb{R}^{d}, d\mu_k)
$ to $\mathrm{BMO}_{G,\,k}(\mathbb{R}^{d}),$ remain unknown.
\end{rem}

\par Our results may be viewed as fractional analogues of those obtained in \cite[Theorem 4.2 and 4.3]{DasBODTFDHS} for the Dunkl heat semigroup. The main novelty of the present work stems from the fact that several pointwise estimates for the Dunkl heat kernel are already available and play a crucial role in \cite{DasBODTFDHS}, whereas no comparable estimates are known for the fractional Dunkl heat kernel. To overcome this difficulty, we first investigate certain properties of the Bochner subordination function (see Propositions \ref{Subordi func esti} and \ref{esti of deriv of subordinate func}) and, based on these results, establish several fundamental estimates for the fractional Dunkl heat kernel (see Proposition \ref{esti for fractional Dunkl heat kernel}). These estimates form the key ingredients in the proofs of our main results. In several instances, our arguments also yield stronger conclusions than those obtained in \cite{DasBODTFDHS}; see, for example, the pointwise convergence results in Corollary \ref{main thm point wise conv}. Furthermore, we show that the kernel estimates for the partial sum operator can be derived directly (see Proposition \ref{kernel of TN}), without passing through multiplier operator techniques as in \cite{DasBODTFDHS}. In addition, for the local growth estimates of the associated maximal operator, our results significantly improve the corresponding bounds in \cite[Theorem 5.2]{DasBODTFDHS}; see Theorem \ref{main thm local growth}.

As a corollary of the boundedness of the maximal operator, we obtain the following pointwise convergence result for the partial sum operators.

\begin{cor}\label{main thm point wise conv}
  Let $d\geq 2,\ell>1$, and $\{a_j\}_{j\in \mathbb{Z}}$ be an $\ell$-lacunary sequence. Then the following are true.
\begin{enumerate}[label=(\roman*)]
    \item If $1<p<\infty$ and $w$ is a $G$-invariant weight such that $w\in A^k_p$, then for any $f\in L^p(\mathbb{R}^d, w\,d\mu_k)$, $\mathcal{T}_{\N} f$ converges almost everywhere and in $L^p(\mathbb{R}^d, w\,d\mu_k)$ as $\N \to (-\infty, \infty)$.
    \item If  $w$ is a $G$-invariant weight such that $w\in A^k_1$, then for any $f\in L^1(\mathbb{R}^d, w\,d\mu_k)$, $\mathcal{T}_{\N} f$ converges almost everywhere and in $w\,d\mu_k$-measure as $\N \to (-\infty, \infty)$.
    \end{enumerate}
  \end{cor}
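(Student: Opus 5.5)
The proof follows the standard Banach principle scheme. First, a.e. convergence holds on a dense class. Second, the maximal operator $\mathcal{T}^*$ controls the oscillation. For the dense class I take the Dunkl--Schwartz space $\mathcal{S}(\mathbb{R}^d)$, or $C_c^\infty(\mathbb{R}^d)$, which is dense in $L^p(\mathbb{R}^d,w\,d\mu_k)$ for $w\in A^k_p$, $1\le p<\infty$, since $w\,d\mu_k$ is a Radon measure.

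For $\varphi\in\mathcal{S}(\mathbb{R}^d)$ I split $\mathcal{T}_{\N}\varphi=\sum_{j=N_1}^{0}+\sum_{j=1}^{N_2}$ and show each tail is absolutely convergent pointwise. For $a_j$ small ($j\to-\infty$), lacunarity gives $a_j\le \ell^{j}a_0$ for $j\le 0$. Then
$$
|e^{-a_{j+1}(-\Delta_k)^s}\varphi(x)-e^{-a_j(-\Delta_k)^s}\varphi(x)|\le \int_{a_j}^{a_{j+1}}|(-\Delta_k)^s e^{-t(-\Delta_k)^s}\varphi(x)|\,dt\le C(a_{j+1}-a_j),
$$
where I would bound $|(-\Delta_k)^s e^{-t(-\Delta_k)^s}\varphi(x)|\le \int|\xi|^{2s}|\mathcal{F}_k\varphi(\xi)|\,d\mu_k(\xi)$ via Dunkl inversion and $|E_k(ix,\xi)|\le1$. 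The sum over $j\le0$ then telescopes to at most $Ca_1$. For $a_j$ large, the difference is bounded by $\int_{a_j}^{a_{j+1}}\int |\xi|^{2s}e^{-t|\xi|^{2s}}|\mathcal{F}_k\varphi|\,d\mu_k\,dt$. Summing over $j\ge1$ gives $\int_{a_1}^\infty(\cdots)\,dt=\int e^{-a_1|\xi|^{2s}}|\mathcal{F}_k\varphi(\xi)|\,d\mu_k(\xi)<\infty$. The same computation also shows the small-$t$ sum is finite directly via $\int_0^{a_1}$. Since $\sup_j|v_j|<\infty$, $\mathcal{T}_{\N}\varphi(x)$ converges for every $x$ as $\N\to(-\infty,\infty)$.

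For general $f$, define the oscillation
$$
\Lambda f(x)=\limsup_{\N,\N'\to(-\infty,\infty)}|\mathcal{T}_{\N}f(x)-\mathcal{T}_{\N'}f(x)|\le 2\mathcal{T}^*f(x).
$$
Since $\Lambda(f)\le\Lambda(f-\varphi)+\Lambda(\varphi)=\Lambda(f-\varphi)$, Theorem \ref{main thm for maximal op}\ref{T star p} (resp.\ \ref{T star 1}) gives
$$
w\,d\mu_k(\{\Lambda f>\lambda\})\le w\,d\mu_k(\{2\mathcal{T}^*(f-\varphi)>\lambda\})\lesssim\bigl(\|f-\varphi\|/\lambda\bigr)^p,
$$
with $p=1$ in the weak case. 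Letting $\varphi\to f$ gives $\Lambda f=0$ a.e., so $\mathcal{T}_{\N}f$ converges a.e. to some limit $\mathcal{T}f$.

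For norm convergence when $1<p<\infty$, I note that $|\mathcal{T}_{\N}f-\mathcal{T}f|\le2\mathcal{T}^*f\in L^p(w\,d\mu_k)$, and dominated convergence gives convergence in $L^p(\mathbb{R}^d,w\,d\mu_k)$. For $p=1$, convergence in measure needs care because $w\,d\mu_k$ may be infinite. I would use the weak bound with a density argument: $\mathcal{T}_{\N}\varphi$ converges in measure on a finite-measure piece, and for the full space I would use $|\mathcal{T}_{\N}\varphi-\mathcal{T}\varphi|\le\sum_{\text{tails}}$. Those tails tend to $0$ uniformly in $x$ by the absolute bounds above, so convergence is uniform for Schwartz $\varphi$. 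The error term is then $\le2\mathcal{T}^*(f-\varphi)$ plus the analogous bound for $\mathcal{T}(f-\varphi)$, and both are controlled by the weak type inequality.

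The main obstacle is the dense-class step: I must justify the pointwise formula $e^{-t(-\Delta_k)^s}\varphi=\mathcal{F}_k^{-1}(e^{-t|\xi|^{2s}}\mathcal{F}_k\varphi)$ and its $t$-derivative for Schwartz $\varphi$. This is standard, since $\mathcal{F}_k$ preserves $\mathcal{S}$.
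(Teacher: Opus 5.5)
Your proof is correct. Its skeleton matches the paper's: convergence on a dense class, then the oscillation argument driven by the weighted bounds for $\mathcal{T}^*$ in Theorem \ref{main thm for maximal op}, then dominated convergence with majorant $2\mathcal{T}^*f$ for norm convergence.

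The main difference is the dense-class step. The paper shows that the tails $A_{L,M}$ and $B_{L,M}$ are Cauchy using kernel-side tools:
\begin{itemize}
\item the size bound of Proposition \ref{esti for fractional Dunkl heat kernel}\ref{frac heat i} and the lower volume bound $\mu_k(B(x,r))\gtrsim r^{d_k}$;
\item the normalization $\int h_{t,s}\,d\mu_k=1$, combined with a Lipschitz bound for $\tau^k_{-y}\varphi(x)-\varphi(x)$;
\item polar coordinates, and lacunarity to sum geometric series.
\end{itemize}
You instead stay on the Dunkl transform side. The bound $|E_k(ix,\xi)|\le 1$ together with $\sum_j|e^{-a_{j+1}|\xi|^{2s}}-e^{-a_j|\xi|^{2s}}|\le\int_0^\infty|\xi|^{2s}e^{-t|\xi|^{2s}}\,dt=1$ gives
\[
\sum_{j\in\mathbb{Z}}\sup_x\bigl|(e^{-a_{j+1}(-\Delta_k)^s}-e^{-a_j(-\Delta_k)^s})\varphi(x)\bigr|\le\|\mathcal{F}_k\varphi\|_{L^1(d\mu_k)}.
\]
This is the same computation as in Proposition \ref{L2 bddness of TN}. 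Compared with the paper's route it is shorter and needs no kernel estimates. It also needs no lacunarity for this step; monotonicity of $\{a_j\}$ suffices. Moreover, it gives convergence uniform in $x$, which you later exploit.

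You also use density of $C_c^\infty$ directly in $L^p(w\,d\mu_k)$, since $w\,d\mu_k$ is Radon. The paper instead passes through $L^p(d\mu_k)\cap L^p(w\,d\mu_k)$.

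Your treatment of $p=1$ is more complete than the paper's. The paper deduces convergence in measure from $L^p$-norm convergence, which is only available for $p>1$. Your argument combines uniform convergence on the dense class with the weak type bound for $2\mathcal{T}^*(f-\varphi)$. This genuinely handles $f\in L^1(w\,d\mu_k)$ on a space of infinite measure, and fills that gap. The aside about convergence ``on a finite-measure piece'' is not needed.
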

We next consider a classical example from harmonic analysis, obtained by taking
$f=\chi_{(0,1)}$ and letting $\mathcal{H}$ denote the Hilbert transform. In this case, one has
$$\frac{1}{r}\int_{-r}^{0} \mathcal{H}(f)(x),dx \sim \log \frac{e}{r},
\qquad r\to 0^{+}.$$
This logarithmic divergence describes the typical local growth near the origin for singular integral operators acting on bounded functions. The next theorem establishes that, for every bounded function $f$, the operator $\mathcal{T}^*f$ exhibits the same asymptotic order at the origin as that of a singular integral transform. We also mention that several related investigations concerning the local behavior of variation operators appear in \cite{BetancorTPVOTHSITHS}. Results in the one-dimensional setting for variation operators associated with convolution families were obtained in \cite{MaWVIFDOASI}. Furthermore, similar one-dimensional estimates for differential transforms corresponding to sequences of one-sided fractional Poisson-type operators were proved in \cite{ChaoBODTFOSFPTOS}.

\par However, in the Dunkl setting for the Dunkl heat semigroup, the authors in \cite{DasBODTFDHS} were not able to obtain the growth order $\big(\log \frac{2}{r}\big)^{1/p'}$; they obtained only the weaker order $r^{-d_k/p'}$. Here, for the fractional Dunkl heat semigroup, we improve their result and recover exactly the order $\big(\log \frac{2}{r}\big)^{1/p'}$. This is the main novelty of the result. Moreover, we mention that the same improvement can also be achieved for the Dunkl heat semigroup considered in their work.

\begin{thm}\label{main thm local growth}
  Let $d\geq 2,\ell>1, \{v_j\}_{j\in \mathbb{Z}}\in \ell^p(\mathbb{Z})$, $1\leq p \leq \infty$, and let $\{a_j\}_{j\in \mathbb{Z}}$ be an $\ell$-lacunary sequence. Then, for any function $f \in L^\infty(d\mu_k)$ supported in the unit ball $B(0,1)$ and for any $r$ such that $0<2r<1$, we have
  $$\frac{1}{\mu_k(B(0,r))}\int_{B(0,r)} \mathcal{T}^*f(x) \, d\mu_k(x) \leq C\, \Big(\log \frac{2}{r}\Big)^{1/p'}\|f\|_{L^\infty(d\mu_k)},$$
  where $C=C(d, k, G, s, \|\{v_j\}\|_{\ell^p(\mathbb{Z})}, \ell)$.
\end{thm}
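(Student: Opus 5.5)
The plan is to bound $\mathcal{T}^*f(x)$ pointwise for $x\in B(0,r)$ by a sum over $j$ of $|v_j|$ times single-scale differences, and then apply Hölder's inequality in $j$. For every $\N$,
$$|\mathcal{T}_\N f(x)|\le \sum_{j\in\mathbb{Z}}|v_j|\,\big|e^{-a_{j+1}(-\Delta_k)^s}f(x)-e^{-a_j(-\Delta_k)^s}f(x)\big| =: \sum_{j}|v_j|\,D_jf(x).$$
Hence
$$\mathcal{T}^*f(x)\le \|\{v_j\}\|_{\ell^p}\Big(\sum_j (D_jf(x))^{p'}\Big)^{1/p'}.$$
The claim then reduces to showing that $\sum_j (D_jf(x))^{p'}\lesssim \log(2/r)\,\|f\|_\infty^{p'}$ uniformly for $x\in B(0,r)$ (for $p=1$, $p'=\infty$, it suffices that $\sup_j D_jf(x)\lesssim\|f\|_\infty$). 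Averaging over $B(0,r)$ then gives the result. Since $D_jf\le 2\|f\|_\infty$ trivially (the semigroup is Markovian: the fractional kernel is positive with integral one, by subordination), and $D_j^{p'}\le (2\|f\|_\infty)^{p'-1}D_j$, it is enough to prove
$$\sum_j D_jf(x)\lesssim \log(2/r)\,\|f\|_\infty,\qquad x\in B(0,r).$$

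Write $D_jf(x)=\int_{B(0,1)}\big(p_{a_{j+1}}(x,y)-p_{a_j}(x,y)\big)f(y)\,d\mu_k(y)$, where $p_t$ is the fractional Dunkl heat kernel, and split the indices into three ranges. For \emph{large times} $a_j\ge 1$, use the time-derivative estimate from Proposition \ref{esti for fractional Dunkl heat kernel}, $|\partial_t p_t(x,y)|\lesssim t^{-1}\,\mu_k(B(x,t^{1/2s}))^{-1}$ (up to the decaying factor). Integrating in $t$ over $[a_j,a_{j+1}]$ and over $y\in B(0,1)$, where $\mu_k(B(0,1))\lesssim \mu_k(B(x,t^{1/2s}))$ for $t\ge1$ and $|x|<1$, gives $D_jf(x)\lesssim\|f\|_\infty \int_{a_j}^{a_{j+1}}t^{-1}\,\mu_k(B(0,1))/\mu_k(B(0,t^{1/2s}))\,dt$. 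Since $\mu_k(B(0,R))\sim R^{d_k}$, these terms sum to a convergent integral $\int_1^\infty t^{-1-d_k/2s}\,dt$. For \emph{small times} $a_{j+1}\le r^{2s}$, again write $D_j=\int_{a_j}^{a_{j+1}}\partial_t$ and use the same derivative bound, now with the factor $(1+d(x,y)/t^{1/2s})^{-d-2s}$-type decay ($d$ the orbit distance). Summing over these $j$ gives at most $\int_0^{r^{2s}}t^{-1}\int |t\,\partial_t p_t(x,y)|\,d\mu_k(y)\,dt$. This must be handled with care, since $\int|t\partial_tp_t|\,d\mu_k$ is only $O(1)$ per dyadic scale. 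In the small range I would instead bound the $y$-integral by $\|f\|_\infty\int|\partial_tp_t(x,y)|\,d\mu_k(y)\lesssim t^{-1}$, which is not summable. So the small-time range must also be absorbed into the logarithm: lacunarity gives that the number of $j$ with $a_j\in[t_0,1]$ is $\lesssim \log(1/t_0)/\log\ell$. For \emph{intermediate times} $r^{2s}<a_j<1$, simply use $D_j\le 2\|f\|_\infty$ and count: there are $O(\log(2/r))$ such $j$, by lacunarity.

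The main obstacle is the small-time range $a_j\le r^{2s}$, where the trivial count fails because it contains infinitely many indices. Here I would first try to bound the full tail $\sum_{a_j\le r^{2s}} D_jf(x)$ by $\int_0^{r^{2s}}\int|\partial_tp_t(x,y)|\,|f(y)-f(x)|\,d\mu_k\,dt$, using $\int\partial_tp_t\,d\mu_k=0$. This fails for merely bounded $f$, and indeed the tail is $O(1)$ uniformly only in an $\ell^{p'}$ sense, not in $\ell^1$. So the reduction to $\ell^1$ must be abandoned in this range. Instead I would keep the $\ell^{p'}$ sum and average in $x$ over $B(0,r)$ first, using the $L^{p'}$-boundedness of the square/variation-type operator $\big(\sum_j|D_jf|^{p'}\big)^{1/p'}$ for $p'\ge2$. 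For $p'<2$, i.e. $p>2$, I would instead interpolate between the trivial bound and the $L^2$ square function bound that follows from Theorem \ref{main thm for maximal op}\ref{T star p} with Rademacher coefficients. Combined with Hölder on $B(0,r)$ against $f\chi_{B(0,1)}$, this yields an $O(1)$ contribution, because $\mu_k(B(0,1))/\mu_k(B(0,r))$ is offset by localizing $f$ to $B(0,2r)$ for these small scales, with the far part of $f$ handled by kernel decay. Adding the three ranges gives $(\log(2/r))^{1/p'}$.
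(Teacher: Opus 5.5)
Your argument works for $1\le p\le 2$, but it has a genuine gap for $2<p\le\infty$, which includes the basic case of bounded coefficients. The problem is your very first step. Bounding $\mathcal{T}^*f(x)$ by $\|\{v_j\}\|_{\ell^p(\mathbb{Z})}\,\|\{D_jf(x)\}_j\|_{\ell^{p'}}$ discards the cancellation that a \emph{fixed} coefficient sequence provides. When $p'<2$, what remains is not controlled by $\|f\|_{L^\infty(d\mu_k)}$, even after averaging over $B(0,r)$. It is a $\rho$-variation along a lacunary sequence with $\rho=p'<2$, exactly the kind of object the introduction recalls can diverge for bounded functions. Already for $k\equiv 0$, consider $f=\sin\bigl(N^{-1/2}\sum_{i\le N}\cos(\lambda_i x_1)\bigr)$, with the $\lambda_i\gg 1/r$ at widely separated scales and localized to $B(0,2r)$. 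One checks that roughly $N$ of the differences $D_jf$ have size $\approx N^{-1/2}$ on a large part of $B(0,r)$. Hence the $\ell^{p'}$-sum is of order $N^{1/p'-1/2}\to\infty$, while $\|f\|_\infty\le 1$. In particular, the $L^q$ bound for $g\mapsto\|\{D_jg\}_j\|_{\ell^{p'}}$ that your small-time step needs is false for $p'<2$. Your proposed interpolation cannot produce it either: interpolating the trivial $\ell^\infty$-valued bound with the $\ell^2$ square-function bound only gives $\ell^q$-valued bounds with $q\ge 2$.

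The fix, which is what the paper does, is to split $f$ in space before touching the sum in $j$, and to apply H\"older in $j$ only to the part of $f$ away from $B(0,r)$. Set $f_1=f\chi_{B(0,2r)}$ and $f_2=f\chi_{B(0,1)\setminus B(0,2r)}$. For the near part $f_1$, use the $L^2(d\mu_k)$ boundedness of $\mathcal{T}^*$ itself, for the given fixed coefficients (Theorem~\ref{main thm for maximal op}\ref{T star p} with $w\equiv 1$, applicable since $\ell^p\subset\ell^\infty$). Combine it with Cauchy--Schwarz on $B(0,r)$ and $\mu_k(B(0,2r))\sim\mu_k(B(0,r))$; this retains exactly the cancellation your absolute values destroy. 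For $f_2$, your reduction is harmless and can be applied to all $j$ at once. By Proposition~\ref{esti for fractional Dunkl heat kernel}\ref{frac heat ii} and $\mu_k(B(x,R))\gtrsim R^{d_k}$, for $|x|<r$ and $|y|>2r$ one has $\sum_j|\tau^k_xh_{a_{j+1},s}(-y)-\tau^k_xh_{a_j,s}(-y)|\le\int_0^\infty|\partial_t\tau^k_xh_{t,s}(-y)|\,dt\lesssim d_G(x,y)^{-d_k}\lesssim|y|^{-d_k}$. The polar decomposition then turns $\int_{2r<|y|<1}|y|^{-d_k}\,d\mu_k(y)$ into $\log\frac{1}{2r}$. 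So the logarithm comes from spatial annuli rather than from counting lacunary times, and no splitting in time is needed. For $p\le 2$, your time-splitting (with the square function on the near small-scale part) is a valid but longer alternative.
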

We organize the paper as follows. In Section \ref{Sec-prel}, we present the necessary definitions and preliminary results related to the Dunkl setting: orbit distances and orbits of balls, Dunkl operators, the Dunkl transform, Dunkl translations, and Dunkl convolution. Section \ref{sec- Dunkl-Calderon-Zygmund theory} is devoted to presenting known results related to Muckenhoupt weights in the Dunkl setting, weighted Dunkl Calder\'on--Zygmund theory, and Dunkl BMO spaces. In Section \ref{Sec-frac Dunkl heat ker}, we first present known results related to the Dunkl heat kernel and subsequently prove some estimates for the Bochner subordinate function associated with the fractional Dunkl heat semigroup. In this section, we also prove some pointwise estimates for the fractional Dunkl heat semigroup. In the next section, we prove boundedness results for the partial sum operator $\mathcal{T}_{\N}$. In the final section, we first prove a Cotlar-type inequality and then establish the main results, namely Theorem \ref{main thm for maximal op}, Corollary \ref{main thm point wise conv}, and Theorem \ref{main thm local growth}. Here, $A\lesssim B$ means that there exists a constant $C>0$ such that 
$A \leq C B,$ and $A\sim B$ means that $
A\lesssim B \quad \text{and} \quad B\lesssim A.$ Throughout the article, $C$ denotes a universal constant depending only on the parameters involved, which may change from line to line. We always assume that $d\geq 2$.

\section{Preliminaries and notations
}\label{Sec-prel}
Dunkl theory has become standard among specialists. Nevertheless, for the convenience of readers who may not be acquainted with this framework, we briefly present the essential definitions and notation that will be used throughout the paper.

\subsection{The Dunkl setup}

\par Let $\langle \cdot, \cdot \rangle$ denote the standard inner product on $\mathbb{R}^d$, and define the corresponding norm by $|\cdot| = \sqrt{\langle \cdot, \cdot \rangle}$. For any $\lambda \in \mathbb{R}^d$ satisfying $|\lambda|=\sqrt{2}$, consider the mapping $\sigma_\lambda: \mathbb{R}^d \to \mathbb{R}^d$ defined by  
$$\sigma_\lambda (x) = x - \langle x, \lambda \rangle \lambda.$$
This transformation $\sigma_\lambda$ is referred to as the \emph{reflection} corresponding to the vector $\lambda$. In what follows, $R$ denotes a \emph{normalized root system}, meaning that $R \subseteq \mathbb{R}^d \setminus \{0\}$ satisfies the conditions:
\begin{enumerate}[label=(\roman*)]
    \item The set $R$ is finite.
    \item For each $\lambda \in R$, one has $R \cap \mathbb{R}\lambda=\{\lambda, -\lambda\}$.
    \item The relation $\sigma_\lambda(R)=R$ holds for all $\lambda \in R$.
    \item Every $\lambda \in R$ satisfies $|\lambda|^2=2$.
\end{enumerate}

The family of reflections $\{\sigma_\lambda : \lambda \in R\}$ generates a finite subgroup $G \subseteq O(d, \mathbb{R})$, commonly called the \emph{reflection group} associated with $R$.

\par Next, consider a function $k : R\to [0, \infty)$ that is invariant under the action of $G$, i.e., $k(\sigma(\lambda))= k(\lambda)$ for all $\sigma \in G$ and $\lambda \in R$. Such a function is referred to as the \emph{multiplicity function}.

\par Define the $G$-invariant weight function ${\bf v}_k$ by  
${\bf v}_k(x) = \prod _ {\lambda \in R} | \langle x , \lambda \rangle|^{k (\lambda)}$.  
This function is homogeneous of degree $\gamma_k := \sum_{\lambda \in R} k(\lambda)$. Let $d\mu_k(x)$ denote the measure associated with ${\bf v}_k$, defined as $d\mu_k(x)=c_k \, {\bf v}_k(x) \, dx$, where
$$c_k^{-1} = \int_{\mathbb{R}^d} e^{-\frac{|x|^2}{2}} \, {\bf v}_k(x) \, dx.$$
From the homogeneity of both ${\bf v}_k$ and the Lebesgue measure, it follows that $d\mu_k$ is homogeneous of degree $d_k := d + \gamma_k$. A notable complication when dealing with this measure is that the volume of a ball depends not only on its radius but also on its center. More precisely,
\begin{equation*}\label{Volofball}
    \mu_k(B(x,r))\sim \ r^d\prod\limits_{\lambda \in R}\left(|\langle x,\lambda \rangle|+r\right)^{k(\lambda)}.
\end{equation*}
If $0<r_1<r_2$, then the preceding estimates immediately imply that
\begin{eqnarray}\label{VOLRADREL}
   C\left(\frac{r_1}{r_2}\right)^{d_k}\leq  \frac{\mu_k(B(x,r_1))}{\mu_k(B(x,r_2))}\leq C^{-1} \left(\frac{r_1}{r_2}\right)^{d}.
\end{eqnarray}
Also, from the homogeneity of the measure, we have a polar decomposition similar to the classical case, given by
\begin{equation}\label{polar dec}
   \int_{\mathbb{R}^d} f(x)\, d\mu_k(x)= \int_0^\infty \Big(\int_{\mathbb{S}^{d-1}} f(ty)\, {\bf v}_k(y) \, d\sigma(y)\Big)\, t^{d_k-1}\,dt,
\end{equation}
where $d\sigma(y)$ is the usual surface measure on $\mathbb{S}^{d-1}$.
\subsection{Orbit distances and orbit of balls}
The action of the group $G$ allows one to define a modified notion of distance on $\mathbb{R}^d$, namely  
$d_G(x, y) := \min |\sigma(x) - y|$, for any $x, y \in \mathbb{R}^d$.  
Note that $d_G(x, y) = 0$ does not necessarily force $x = y$. Despite this, the function $d_G$ retains the standard metric properties such as non-negativity, symmetry, and the triangle inequality.
\par For any $r>0$, set
$$
V_G(x,y,r)=\max\left\{\mu_k(B(x,r)),\,\mu_k(B(y,r))\right\}.
$$
As a direct consequence of (\ref{Volofball}), it follows that
\begin{equation}\label{V(x,y,d(x,y) comparison}
V_G(x,y,d_G(x,y))\sim \mu_k(B(x,d_G(x,y))) \sim \mu_k(B(y,d_G(x,y))).
\end{equation}

Let $\mathcal{O}(B)$ denote the orbit of a ball $B$, that is,
$$
\mathcal{O}(B)=\Big\{y\in \mathbb{R}^d : d_G(c_B,y)\leq r(B)\Big\}
=\bigcup_{\sigma \in G}\sigma(B),
$$
where $c_B$ and $r(B)$ denote the centre and radius of $B$, respectively. Since for any $\sigma \in G$, $\mu_k(\sigma(B))=\mu_k(B)$, we have
\begin{equation}\label{orbit volume compa}
\mu_k(B)\leq \mu_k(\mathcal{O}(B))\leq |G|\,\mu_k(B).
\end{equation}

The function $d_G$ is referred to as the \emph{Dunkl metric}, although it does not define a metric on $\mathbb{R}^d$. Clearly, for all $x,y\in \mathbb{R}^{d}$, one has $d_G(x,y)\leq |x-y|$.
\subsection{Dunkl operators, Dunkl kernel and Dunkl transform}

In 1989, Charles F. Dunkl \cite{Dunkl} introduced the family of \emph{Dunkl operators} $\{T_j: 1\leq j\leq d\}$ defined by  
\begin{eqnarray*}
T_j f(x)= \partial_j f(x)+\sum\limits_{\lambda \in R} \frac{k(\lambda )}{2} \lambda _j \frac{f(x) - f(\sigma _\lambda  x)}{\langle \lambda  , x \rangle}, \quad 1\leq j \leq d;
\end{eqnarray*}
where $\lambda =(\lambda_1,\lambda_2,...,\lambda_d)$ and $f\in C^1(\mathbb{R}^d)$.  
These operators extend the classical partial derivatives and reduce to them in the special case when the multiplicity function $k$ vanishes. The \emph{Dunkl gradient} is given by $\nabla_k=(T_1, T_2,\cdots, T_d)$ and the \emph{Dunkl Laplacian} is given by 
$$\Delta_k f(x)= \sum\limits_{j=1}^d T_j^2 f(x)= \Delta f(x)
+ \sum_{\lambda\in R} k(\lambda)
\left(
\frac{\langle\nabla f(x),\lambda\rangle}{\langle\lambda,x\rangle}
-\frac{|\lambda |^2}{2}
\frac{f(x)-f(\sigma_\lambda x)}{\langle\lambda,x\rangle^2}
\right),$$ 
\par For any fixed $y \in \mathbb{R}^d$, it is known (see \cite{DunklIKWRGI}) that there exists a unique real-analytic function $f(x) = E_k(x, y)$ solving the system
\begin{align*}
    \begin{cases}
       T_j f  &=  y_j f, \hspace{.5cm}1\leq j\leq d;\\
       \hfill f(0) &= 1.
    \end{cases}
\end{align*}
The function $E_k$ is called the \emph{Dunkl kernel}, and it plays a role analogous to the exponential function in this setting. In particular, it satisfies several properties similar to those of the exponential, namely:
\begin{enumerate}[label=(\roman*)]
\item $E_k$ admits a unique holomorphic extension to $\mathbb{C}^d \times \mathbb{C}^d$.
\item $E_k(x,y) = E_k(y,x)$ for any $x, y \in \mathbb{C}^d$.
\item $E_k(x,0) = 1$ for any $x \in \mathbb{C}^d$.
\item $E_k(tx,y) = E_k(x,ty)$ for any $x, y \in \mathbb{C}^d$ and for any $t \in \mathbb{C}$. 
\item $|\partial^\alpha E_k(ix,\cdot)(y)| \leq |x|^{|\alpha|},$ for any $x, y\in \mathbb{R}^d$ and any multi-index $\alpha \in \mathbb{N}_0^d$.
\end{enumerate}
These statements are classical, and detailed proofs are available in \cite{DunklIKWRGI, RoslerDOTA}.

\par For $1 \leq p < \infty$, we write $L^p(d\mu_k)$ for the space of $p$-integrable functions on $\mathbb{R}^d$ with respect to the measure $d\mu_k$, while $L^\infty(d\mu_k)$ denotes the corresponding space of essentially bounded functions on $\mathbb{R}^d$.
In analogy with the Fourier transform, the \emph{Dunkl transform} associated with a function $f$ is defined by
\begin{eqnarray*}
\mathcal{F}_kf(\xi)=c_k\,\int_{\mathbb{R}^d}f(x)E_k(-i\xi,x)\,h_k(x)\,dx =\int_{\mathbb{R}^d}f(x)E_k(-i\xi,x)\,d\mu_k(x).
\end{eqnarray*}
Owing to the properties of the Dunkl kernel, this expression is well-defined for every $f \in L^1(d\mu_k)$. In the special situation where $k \equiv 0$, the operator $\mathcal{F}_k$ coincides with the classical Fourier transform, which shows that the Dunkl transform extends the usual Fourier framework. We record below several fundamental properties of $\mathcal{F}_k$ that parallel those of the Fourier transform (see \cite{deJeuTDT, RoslerDOTA, XuBook} for further details).
\begin{enumerate}[label=(\roman*)]
    \item The inversion identity
    $$f(x)=\mathcal{F}_k^{-1}(\mathcal{F}_kf)(x)=\int_{\mathbb{R}^d}\mathcal{F}_kf(\xi)E_k(i\xi,x)\,d\mu_k(\xi),$$
    is valid whenever $f, \, \mathcal{F}_kf \in L^1(d\mu_k)$.
    
    \item The transform $\mathcal{F}_k$ preserves the $L^2$-norm, i.e.,
    $||\mathcal{F}_kf||_{L^2(d\mu_k)}=||f||_{L^2(d\mu_k)}$.
    
    \item The scaling relation $\mathcal{F}_k (f_t)(\xi)= \mathcal{F}_k\,f (t\xi)$ holds, where $f_t(x) := t^{-d_k} f(x/t)$, $t>0$.
    
    \item Radial symmetry is preserved: if $f$ is radial, then so is $\mathcal{F}_k f$.
\end{enumerate}

\subsection{Dunkl translation and Dunkl convolution}

Since the measure $d\mu_k$ fails to be invariant under the standard translation on $\mathbb{R}^d$, an alternative notion of translation is required in this setting. This leads to the introduction of the \emph{Dunkl translation} operator, which plays a central role in harmonic analysis in the Dunkl setting. More precisely, the operator $\tau^k_x: L^2(d\mu_k) \to L^2(d\mu_k)$ is defined through the relation
$$\mathcal{F}_k(\tau^k_x f)(y) = E_k(ix,y) \mathcal{F}_k f(y).$$
We recall below several basic properties of $\tau^k_x$ that will be useful later; see \cite{ThangaveluCOMF} for proofs and further discussion.

\begin{enumerate}[label=(\roman*)]
\item $\tau^k_0=I$, that is, $\tau^k_0$ recovers the identity operator.

\item \label{pointwise formula for Dunkl trans} 
For functions $f\in L^2(d\mu_k)$ satisfying $f, \, \mathcal{F}_kf \in L^1(d\mu_k)$, the pointwise representation
$$
\tau^k_yf(x)=\tau^k_x f(y) = \int_{\mathbb{R}^d} E_k(ix, \xi) E_k(iy, \xi) \mathcal{F}_k f(\xi) \, d\mu_k(\xi)
$$
holds.

\item \label{positivity of Dunkl trans} 
If $f$ is a suitable radial function with $f \geq 0$ a.e., then one also has $\tau^k_x f \geq 0$ a.e.

\item \label{bddness of Dunkl trans on radial Lp}
For radial functions, the operator $\tau^k_x$ acts boundedly on $L^p(d\mu_k)$ for every $1 \leq p \leq \infty$ (see \cite{GorbachevPLBDTGTOAIA, ThangaveluCOMF}). In contrast, boundedness for general (non-radial) $L^p$-functions remains unresolved.
\end{enumerate}
Moreover, R\"osler established the following explicit formula for the Dunkl translation of radial Schwartz functions of the form $f(x)=\widetilde{f}(|x|)$ (see \cite{RoslerAPRF}):
\begin{equation*}
\tau^k_x f(-y)=\int\limits_{\mathbb{R}^d}(\widetilde{f}\circ \mathcal{A})(x,y,\eta),d\mu_x(\eta),
\end{equation*}
where $\mathcal{A}(x,y,\eta)=\sqrt{|x|^2+|y|^2-2\langle y, \eta \rangle}$, and $\mu_x$ is a probability measure supported on the convex hull of the set $\{ \sigma(x) : \sigma \in G \}$. This representation implies that $\tau^k_x f \geq 0$ whenever $f$ is radial and nonnegative.

\par Let $f,g \in L^2(d\mu_k)$. Their \emph{Dunkl convolution} is defined as
$$f\ast_kg(x)=\int_{\mathbb{R}^d}f(y)\tau^k_xg(-y)\,d\mu_k(y).$$
This convolution operation satisfies several basic properties, which we record below:
\begin{enumerate}[label=(\roman*)]
\item The convolution is symmetric, that is,
$f\ast_kg(x)=g\ast_kf(x)$ for all $f, g \in L^2(d\mu_k)$.

\item\label{dunkl conv is commutative} The Dunkl transform converts convolution into pointwise multiplication, that is, 
$\mathcal{F}_k(f\ast_kg)(\xi)=\mathcal{F}_kf(\xi) \mathcal{F}_kg(\xi)$ for all $f, g \in L^2(d\mu_k)$.
\end{enumerate}

\section{Weighted Dunkl--Calder\'on--Zygmund theory and Dunkl BMO spaces}\label{sec- Dunkl-Calderon-Zygmund theory}
A Muckenhoupt weight is a positive function that controls how averages behave under weighted integration. These weights arise naturally in the study of singular integrals, maximal operators, and partial differential equations. We begin this section with an important class of weight functions in the Dunkl setting.
\begin{defn}
Let $1\leq p<\infty$ and $w$ be a non negative locally integrable function on $\mathbb{R}^d$. Then $w$ is said to belong to the class $A^k_p$, if it satisfies
$$\sup_{B\subset \mathbb{R}^d }\Big(  \frac{1}{ \mu_k(B)} \int_{B} w(y) d\mu_k(y) \Big)\Big(  \frac{1}{ \mu_k(B)} \int\limits_{B}w(y)^{1-p'}d\mu_k(y) \Big)^{p-1} < \infty,$$
when $p=1$, $\Big(  \frac{1}{ \mu_k(B)} \int_{B}w(y)^{1-p'}d\mu_k(y) \Big)^{p-1}$ is understood as $\left(\inf\limits_{B}w\right)^{-1}.$
\end{defn}

\subsection{Weighted Dunkl--Calder\'on--Zygmund theory}
Weighted Calder\'on--Zygmund theory studies how singular integral operators act on weighted Lebesgue spaces. We state here the weighted Calder\'on--Zygmund theory in the Dunkl setting from \cite{SumanWBMTIDSVSI}.
\begin{defn}\label{defn of bilinear Dunkl cal zyg ope}
 A Dunkl--Calder\'on--Zygmund operator is a function $\mathcal{T} :\mathcal{S}(\mathbb{R}^d)\to\mathcal{S}'(\mathbb{R}^d)$ such that for $f\in C_c^{\infty}(\mathbb{R}^d)$ with $\sigma (x)\notin  {\rm supp}\,f $ for all  $\sigma\in G$, $\mathcal{T}$ can be represented as 
  $$\mathcal{T}f(x)=\int_{\mathbb{R}^{d}}K(x,y) f(y)\,d\mu_k(y),$$
  where $K$ is a function defined away from the set $\mathcal{O}(\bigtriangleup _{2})$
  $$:=\big\{(x,y)\in \mathbb{R}^{2d}:x= \sigma (y)\text{, for some }\sigma  \in G\big\}$$
  and for any $x, x', y, y'\in \mathbb{R}^d$, $K$ satisfies the following regularity conditions for some $0<\varepsilon \leq 1$:
 \begin{eqnarray*}
|K(x,y)|
&\leq & \frac{C}{\mu_k\big(B(x, d_G(x,y))\big)}\left[\frac{d_G(x,y)}{|x-y|}\right]^\varepsilon
\end{eqnarray*}
 for $d_G(x,y)>0;$
\begin{eqnarray*}
 |K(x,y)-K(x,y')|
&\leq & \frac{C}{\mu_k\big(B(x, d_G(x,y))\big)}\left[\frac{|y-y'|}{|x-y|}\right]^\varepsilon
\end{eqnarray*}
for $|y-y'|<d_G(x,y)/2$; and 
\begin{eqnarray*}
 |K(x,y)-K(x',y)|
&\leq & \frac{C}{\mu_k\big(B(x, d_G(x,y))\big)}\left[\frac{|x-x'|}{|x-y|}\right]^\varepsilon
\end{eqnarray*}
for $|x-x'|<d_G(x,y)/2$.
\end{defn}

The boundedness of $m$-linear Dunkl–Calder\'on-–Zygmund type operators with respect to $m$-fold $G$-invariant Muckenhoupt weights was established in \cite{SumanWBMTIDSVSI} (see also \cite{TanCOTLBIDS} for the linear unweighted case). In the present article, however, we restrict our attention to the linear case. Since these results will be used later, we state the corresponding boundedness theorem below for completeness.
\begin{thm}\label{weighted calderon Zygmund bddness thm}\cite[Theorem 5.3]{SumanWBMTIDSVSI}
    Let $\mathcal{T}$ maps from $L^{q}(\mathbb{R}^d,\,d\mu_k)$ to $L^{q,\,\infty}(\mathbb{R}^d,\, d\mu_k)$ for some $q,$ satisfying $1\leq q<\infty$. Then we have the following results.
    \begin{enumerate}[label=(\roman*)]
    \item \label{CZ bound p} If $1<p<\infty$ and $w$ is a $G$-invariant weight such that $w\in A^k_p$, then for any $f\in L^p(\mathbb{R}^d, w\,d\mu_k)$ we have the strong type inequality
    $$\Big(\int_{\mathbb{R}^d}|\mathcal{T}f(x)|^p w(x)\, d\mu_k(x)  \Big)^{1/p}\leq C\, \Big(\int_{\mathbb{R}^d}|f(x)|^p w(x)\, d\mu_k(x)  \Big)^{1/p}.$$

    \item \label{CZ bound 1} If  $w$ is a $G$-invariant weight such that $w\in A^k_1$, then for any $f\in L^1(\mathbb{R}^d, w\,d\mu_k)$ we have the weak type inequality
    $$\sup\limits_{t>0}\Big(\int\limits_{\{|\mathcal{T}f(x)|>t\}} w(x)\, d\mu_k(x)  \Big)^{1/p}\leq C\, \int_{\mathbb{R}^d}|f(x)| w(x)\, d\mu_k(x).$$ 
    \end{enumerate}
Here, $C$ depends on the constant arising from the initial $L^{q}(\mathbb{R}^d,\,d\mu_k)$ to $L^{q,\infty}(\mathbb{R}^d,\,d\mu_k)$ boundedness, and on the constants appearing in the regularity condition of $K$.
\end{thm}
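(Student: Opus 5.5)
The plan is to follow the classical Coifman--Fefferman route. The Dunkl structure enters in only two places: the kernel bounds are expressed in terms of $d_G$, and $d_G$-neighbourhoods of points are orbits $\mathcal{O}(B)$ of balls rather than balls. The hypothesis that $w$ is $G$-invariant is exactly what lets orbit averages be controlled by ball averages. Indeed, if $w$ is $G$-invariant then $w(\sigma(B))=w(B)$ for all $\sigma\in G$, so by \eqref{orbit volume compa}
$$w(B)\le w(\mathcal{O}(B))\le |G|\,w(B),\qquad \frac{1}{\mu_k(\mathcal{O}(B))}\int_{\mathcal{O}(B)}w\,d\mu_k\le \frac{|G|}{\mu_k(B)}\int_B w\,d\mu_k .$$
I will use throughout that $(\mathbb{R}^d,|\cdot|,\mu_k)$ is a space of homogeneous type by \eqref{VOLRADREL}.

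\emph{Step 1 (unweighted theory).} From the assumed $L^q\to L^{q,\infty}$ bound I will prove weak type $(1,1)$ by a Calder\'on--Zygmund decomposition $f=g+\sum_i b_i$ with respect to $\mu_k$. Each $b_i$ is supported in a ball $B_i$ and has mean zero, and the exceptional set is taken to be $\bigcup_i\mathcal{O}(2B_i)$, whose measure is at most $|G|\sum_i\mu_k(2B_i)\lesssim \|f\|_1/t$.

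Off this set I use the Hörmander-type estimate
$$\int_{d_G(x,c_{B})>2r(B)}|K(x,y)-K(x,c_B)|\,d\mu_k(x)\le C, \qquad y\in B .$$
To prove it, note that $|y-c_B|<r(B)\le d_G(x,c_B)/2$ and $|x-y|\ge d_G(x,y)\sim d_G(x,c_B)$. Hence the smoothness condition gives a bound of
$$\frac{(r(B)/d_G(x,c_B))^{\varepsilon}}{\mu_k(B(x,d_G(x,c_B)))}.$$
I will sum this over the dyadic $d_G$-annuli, which are differences of orbits $\mathcal{O}(2^jB)$, using \eqref{V(x,y,d(x,y) comparison} to replace $\mu_k(B(x,d_G))$ by $\mu_k(B(c_B,2^jr))$. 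Marcinkiewicz interpolation and duality then give $L^p$ boundedness for $1<p<\infty$. This uses the smoothness in both variables, and the size condition only for local integrability.

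\emph{Step 2 (strong type with weights).} Let $M^G$ denote the maximal operator over orbits of balls. I will prove the sharp function estimate
$$M^{\#}_\delta(\mathcal{T}f)(x)\lesssim M^G f(x),\qquad 0<\delta<1,$$
by splitting $f=f\chi_{\mathcal{O}(2B)}+f\chi_{\mathcal{O}(2B)^c}$ for a ball $B\ni x$. The local part is handled by Kolmogorov's inequality together with Step 1. For the far part, $|\mathcal{T}f(z)-\mathcal{T}f(c_B)|$ is estimated through the $x$-smoothness of $K$ summed over orbit annuli, exactly as in Step 1.

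Since $w$ is $G$-invariant, the displayed orbit comparison gives $M^Gf\lesssim \sum_{\sigma\in G}M_k(f\circ\sigma)\circ\sigma^{-1}$ pointwise, where $M_k$ is the maximal operator over balls. Invariance of $w$ gives $\|f\circ\sigma\|_{L^p(w)}=\|f\|_{L^p(w)}$, so $M^G$ is bounded on $L^p(w)$ for $w\in A^k_p$ by Muckenhoupt's theorem on spaces of homogeneous type. Since $A^k_p\subset A^k_\infty$, the Fefferman--Stein inequality $\|M_\delta u\|_{L^p(w)}\lesssim\|M_\delta^{\#}u\|_{L^p(w)}$ then yields \ref{CZ bound p}. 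This is first done for $f$ in a dense class where the left side is finite, and then extended by density.

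\emph{Step 3 (weighted weak type $(1,1)$) --- the main obstacle.} I redo the decomposition of Step 1 at height $t$. For the good part, $|g|\lesssim t$ and $A^k_1\subset A^k_2$, so $\|g\|_{L^2(w)}^2\lesssim t\|f\|_{L^1(w)}$ by Step 2. For the exceptional set I use invariance and doubling of $w$:
$$w\Big(\bigcup_i\mathcal{O}(2B_i)\Big)\le |G|\sum_i w(2B_i)\lesssim\sum_i w(B_i)\lesssim \frac1t\sum_i\Big(\inf_{B_i}w\Big)\int_{B_i}|f|\,d\mu_k\le\frac1t\|f\|_{L^1(w)},$$
where the middle steps use the $A^k_1$ condition in the form $w(B_i)/\mu_k(B_i)\lesssim\inf_{B_i}w$.

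The delicate point is the bad part off the exceptional set, which needs the weighted Hörmander bound
$$\int_{d_G(x,c_B)>2r}|K(x,y)-K(x,c_B)|\,w(x)\,d\mu_k(x)\lesssim \inf_B w .$$
Summing over annuli $\mathcal{O}(2^{j+1}B)\setminus\mathcal{O}(2^jB)$ bounds the left side by
$$\sum_j 2^{-j\varepsilon}\,\frac{w(\mathcal{O}(2^{j+1}B))}{\mu_k(B(c_B,2^jr))}.$$
By $G$-invariance and \eqref{orbit volume compa}, each term is $\lesssim 2^{-j\varepsilon}\,\frac{w(2^{j+1}B)}{\mu_k(2^{j+1}B)}\lesssim 2^{-j\varepsilon}\inf_{2^{j+1}B}w\le 2^{-j\varepsilon}\inf_B w$, and the geometric series converges. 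Without invariance of $w$ this step fails, because the orbit pieces $\sigma(2^{j}B)$ may carry much more $w$-mass than $B$. This is why I expect Step 3, and specifically this weighted Hörmander estimate, to be the heart of the argument. Combining the three parts gives \ref{CZ bound 1}.
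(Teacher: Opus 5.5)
The paper gives no proof of this theorem. It is quoted from \cite[Theorem 5.3]{SumanWBMTIDSVSI}, a multilinear result specialised to one function, so there is no in-paper argument to compare yours with.

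Judged on its own, your proposal is a correct proof along the classical Coifman--Fefferman lines. The Dunkl-specific changes are placed where they are needed. The exceptional sets and local pieces are orbit-enlarged ($\mathcal{O}(2B_i)$, $f\chi_{\mathcal{O}(2B)}$), so the sharp-function bound comes out in terms of the orbit maximal function $M^G$. The $G$-invariance of $w$ enters only to pass from orbits to balls: boundedness of $M^G$ on $L^p(w)$, the bound $w(\mathcal{O}(2B_i))\le |G|\,w(2B_i)$, and the weighted H\"ormander estimate.

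For the endpoint (ii) you run a weighted Calder\'on--Zygmund decomposition. The other standard route, common in the multilinear theory, reuses Step 2. It combines the weak-type Fefferman--Stein inequality $\|M_\delta u\|_{L^{1,\infty}(w)}\lesssim\|M^\#_\delta u\|_{L^{1,\infty}(w)}$ for $w\in A^k_\infty$ with the weak $(1,1)$ bound for $M^G$ on $L^1(w)$, which again follows from invariance. That route is shorter once Step 2 is in place. Yours is more elementary and isolates the one estimate where invariance is indispensable, namely the weighted H\"ormander bound.

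Three points need repair, none of them structural.

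\emph{The pointwise bound for $M^G$.} Since $M_k(f\circ\sigma)=(M_kf)\circ\sigma$, your expression $\sum_{\sigma}M_k(f\circ\sigma)\circ\sigma^{-1}$ equals $|G|\,M_kf$. But $M^Gf\lesssim M_kf$ is false: take $f$ concentrated in a small ball around $\sigma(x_0)$, far from $x_0$. The correct statement is $M^Gf(x)\le\sum_{\sigma\in G}M_kf(\sigma(x))$, and with it your norm argument goes through unchanged.

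\emph{The case $q=1$ in Step 1.} Here ``Marcinkiewicz interpolation and duality'' has no second endpoint, so the unweighted $L^p$ bounds cannot come from Step 1. They come instead from Step 2 with $w\equiv 1$, which uses only weak type $(1,1)$ via Kolmogorov. You must run the good-$\lambda$ argument under the a priori condition $\int_0^N\lambda^{p-1}\mu_k(\{M_\delta(\mathcal{T}f)>\lambda\})\,d\lambda<\infty$. This holds because $M_\delta(\mathcal{T}f)\in L^{1,\infty}(d\mu_k)$.

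\emph{The dense class.} The ``dense class where the left side is finite'' should be exhibited. Take $f\in L^\infty_c$. The size condition gives $|\mathcal{T}f(x)|\lesssim\|f\|_{L^1(d\mu_k)}/\mu_k(B(0,|x|))$ for large $|x|$, and $M_\delta(\mathcal{T}f)$ decays similarly. This is $p$-integrable against $w\,d\mu_k$ at infinity because $w\in A^k_{p-\epsilon}$ for some $\epsilon>0$. Local finiteness follows from the unweighted $L^r$ bounds together with the reverse H\"older inequality for $w$.
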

\subsection{Dunkl BMO spaces}\label{subsec-BMO sp}
BMO spaces also arise naturally in the study of singular integral operators. In the Dunkl setting, there are two different types of BMO spaces, which we describe below.
\begin{equation*}
\mathrm{BMO}_k(\mathbb{R}^{d})
:=
\big\{
f \in L_{\text{loc}}^{1}(\mathbb{R}^{d}, d\mu_k) :
\|f\|_{\mathrm{BMO}_k} 
:=
\sup_{B\, \subseteq \, \mathbb{R}^{d}}
\frac{1}{\mu_k(B)}
\int_{B}
|f(x)-f_B|\, d\mu_k(x)
< \infty
\big\},
\end{equation*}
where
$$f_{B}
:=
\frac{1}{\mu_k(B)}
\int_{B} f(x)\, d\mu_k(x).$$

Also, using the Dunkl metric $d_G(x,y)$, we define the following BMO space:
\begin{equation*}
\mathrm{BMO}_{G,\,k}(\mathbb{R}^{d})
:=
\big\{
f \in L_{\text{loc}}^{1}(\mathbb{R}^{d}, d\mu_k) :
\|f\|_{\mathrm{BMO}_{G,\,k}}
< \infty
\big\},
\end{equation*}
where
$$\|f\|_{\mathrm{BMO}_{G,\,k}} 
:=
\sup_{B\, \subseteq \, \mathbb{R}^{d}}
\frac{1}{\mu_k(\mathcal{O}(B))}
\int_{B}
|f(x)-f_{\mathcal{O}(B)}|\, d\mu_k(x)$$ and 
$$f_{\mathcal{O}(B)}
:=
\frac{1}{\mu_k(\mathcal{O}(B))}
\int_{\mathcal{O}(B)} f(x)\, d\mu_k(x).$$
As usual, $f \in \mathrm{BMO}_{k}(\mathbb{R}^{d})$ denotes an element of the quotient space $\mathrm{BMO}_{k}(\mathbb{R}^{d})$ modulo constants. Similarly, $f \in \mathrm{BMO}_{G,\,k}(\mathbb{R}^{d})$ denotes an element of the quotient space $\mathrm{BMO}_{G,\,k}(\mathbb{R}^{d})$ modulo constants. We also have the equivalence of the norms as follows.
$$\|f\|_{\mathrm{BMO}_k} 
\sim
\sup_{B\, \subseteq \, \mathbb{R}^{d}} \inf_{c \in \mathbb{C}}
\frac{1}{\mu_k(B)}
\int_{B}
|f(x)-c\,|\, d\mu_k(x)$$
and 
$$\|f\|_{\mathrm{BMO}_{G,\,k}} 
\sim
\sup_{B\, \subseteq \, \mathbb{R}^{d}} \inf_{c \in \mathbb{C}}
\frac{1}{\mu_k(\mathcal{O}(B))}
\int_{B}
|f(x)-c\,|\, d\mu_k(x)$$
It is not too hard to see that
$$
L^{\infty}(\mathbb{R}^{d}, d\mu_k)
\subset
\mathrm{BMO}_{G,\,k}(\mathbb{R}^{d})
\subset
\mathrm{BMO}_{k}(\mathbb{R}^{d}),
$$
that is 
$$\|\cdot\|_{\mathrm{BMO}_{k}} \lesssim \|\cdot\|_{\mathrm{BMO}_{G,\,k}}  \lesssim \|\cdot\|_{L^\infty(d\mu_k)},$$
and the inclusions are strict. Moreover, if $f \in \mathrm{BMO}_{k}(\mathbb{R}^{d})$ is $G$-invariant, then
$f \in \mathrm{BMO}_{G,\,k}$ $(\mathbb{R}^{d})$.
For details, we refer to \cite[Proposition 7.4]{JiuTDOTHSAWTDO}.


\section{Estimates for the fractional Dunkl heat kernel}\label{Sec-frac Dunkl heat ker} 
\subsection{Dunkl heat kernel}  

We begin by recalling several facts concerning the Dunkl heat kernel and the associated heat semigroup, following the work of R\"osler \cite{RoslerGHPATHEFDO}. The \emph{Dunkl heat semigroup} $\{e^{t\,\Delta_k}\}_{t\geq 0}$ is expressed as
$$e^{t\,\Delta_k}f(x)= \mathcal{F}_k^{-1}(e^{-t|\cdot|^2} \mathcal{F}_kf)(x).$$
As discussed earlier in Section \ref{Sec-intro}, the function $u(x,t)=e^{t\,\Delta_k} f(x)$ solves the Dunkl heat equation
\begin{align*}\label{Dunkl heat eq}
\begin{cases}
\partial_t u(x,t) -\Delta_k \,u(x,t) &= 0,\, \quad (x,t)\in \mathbb{R}^d \times (0,\infty),\\
\hfill u(x,0) &= f(x).
\end{cases}
\end{align*}

Applying the Dunkl transform shows that this semigroup admits a convolution representation:
$$e^{t\,\Delta_k}f(x)= f \ast_k h_t(x)= \int_{\mathbb{R}^d}\tau^k_xh_t(-y)f(y) \, d\mu_k(y),$$
where $\tau^k_xh_t(-y)$ is called the \emph{Dunkl heat kernel}. The function $h_t$ itself is given by
$$h_t(x)=\mathcal{F}_k^{-1}(e^{-t|\cdot|^2})(x)= (2t)^{-d_k/2}e^{-|x|^2/(4t)}.$$

In particular, $h_t$ is a non-negative radial Schwartz function. It also satisfies the normalization condition
\begin{equation}\label{int of heat ker is 1}
\int_{\mathbb{R}^d}h_t(x)\, d\mu_k(x)=1 .
\end{equation}

We will make use of the following estimates for the Dunkl heat kernel.

\begin{prop}\label{Dunkl heat ker esti} \cite[Theorem 3.1]{HejnaROADFTHSHITRDS}
Let $t>0$ and $x, y\in \mathbb{R}^d$. Then
\begin{enumerate}[label=(\roman*)]
\item \label{heat i} \leavevmode\vspace*{-\dimexpr\abovedisplayskip + \baselineskip}
$$\tau^k_x h_t(-y) \lesssim \frac{1}{\mu_k(B(x, \sqrt{t}))}\,\Big(1+\frac{|x-y|}{\sqrt{t}}\Big)^{-2} \exp{(-c\,{d_G(x,\, y)^2}/{t})}.$$

\item \label{heat ii} 
$$|\tau^k_x h_t(-y)-\tau^k_x h_t(-y')| \lesssim \frac{|y-y'|}{\sqrt{t}}\frac{1}{\mu_k(B(x, \sqrt{t}))}\, \Big(1+\frac{|x-y|}{\sqrt{t}}\Big)^{-2}\exp{(-c\,{d_G(x,\, y)^2}/{t})},$$
whenever $|y-y'|\leq \sqrt{t}$.
\end{enumerate}
\end{prop}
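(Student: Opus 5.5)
The statement is quoted from \cite[Theorem 3.1]{HejnaROADFTHSHITRDS}. If I had to reprove it, I would start from R\"osler's formula for the translation of radial functions. Since $h_t(x)=\widetilde{h}_t(|x|)$ with $\widetilde{h}_t(r)=(2t)^{-d_k/2}e^{-r^2/(4t)}$, it gives
$$\tau^k_xh_t(-y)=(2t)^{-d_k/2}\int e^{-\mathcal{A}(x,y,\eta)^2/(4t)}\,d\mu_x(\eta).$$
The key geometric fact is that $\mathcal{A}(x,y,\eta)\geq d_G(x,y)$ for every $\eta$ in the convex hull of $\{\sigma(x):\sigma\in G\}$. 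This follows by minimizing the linear function $\eta\mapsto -\langle y,\eta\rangle$ over that hull, where the minimum is attained at a vertex $\sigma(x)$.

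\textbf{Step 1: Gaussian bound with $d_G$.} Split $e^{-\mathcal{A}^2/4t}\le e^{-d_G^2/8t}\,e^{-\mathcal{A}^2/8t}$. The second factor reconstructs $(2t)^{d_k/2}\tau^k_xh_{2t}(-y)$, up to a harmless constant.

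\textbf{Step 2: volume normalization.} I would show $\tau^k_xh_{2t}(-y)\lesssim \mu_k(B(x,\sqrt t))^{-1}$ by combining three facts: the semigroup identity $\tau^k_xh_{2t}(-y)=\int \tau^k_xh_t(-z)\,\tau^k_yh_t(-z)\,d\mu_k(z)$, Cauchy--Schwarz, and the diagonal value $\int|\tau^k_xh_t(-z)|^2d\mu_k(z)=\tau^k_xh_{2t}(-x)$. The diagonal can be computed through the Dunkl transform as $\int e^{-2t|\xi|^2}|E_k(ix,\xi)|^2d\mu_k(\xi)$. Using the known growth $E_k(x,x)\sim$ the weight ratio, together with \eqref{Volofball}, this is comparable to $\mu_k(B(x,\sqrt t))^{-1}$. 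This already gives (i) without the polynomial factor.

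\textbf{Step 3: the factor $(1+|x-y|/\sqrt t)^{-2}$.} This is the main obstacle, since it records that, even for $y$ near a reflected copy $\sigma(x)\neq x$, the kernel is smaller than at the diagonal. I would first try the heat equation in the $y$-variable. Write $\Delta_k=\Delta+\sum_\lambda k(\lambda)(\dots)$ and isolate the reflection part $f(y)-f(\sigma_\lambda y)$. The reflected contributions $\tau^k_xh_t(-\sigma_\lambda y)$ then appear with the coefficient $\langle\lambda,y\rangle^{-2}$, which brings in a factor $t/|x-y|^2$ when $|x-y|\gg\sqrt t$. Concretely, I would integrate in time using $\partial_t\tau^k_xh_t=\Delta_k\tau^k_xh_t$ and Duhamel over reflection chains, iterating over the finite group $G$. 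For each chain one keeps the Gaussian in $d_G$ from Step 1 and gains $t/|x-y|^2$ from the first reflection that separates $y$ from $x$. Equivalently, one can estimate the mass that $\mu_x$ assigns near the vertices $\sigma(x)$, $\sigma\neq \mathrm{id}$, and show it is $O\big((\sqrt t/|x-y|)^2\big)$ at the relevant scale. I expect this decay estimate for $\mu_x$, or its PDE surrogate, to be the delicate part.

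\textbf{Step 4: regularity (ii).} Differentiate in $y$ under R\"osler's integral:
$$\nabla_y\,\widetilde{h}_t(\mathcal{A})=-\frac{y-\eta}{2t}\,\widetilde{h}_t(\mathcal{A}).$$
Since $|y-\eta|\le\mathcal{A}$ and $(\mathcal{A}/\sqrt t)\,e^{-\mathcal{A}^2/8t}\lesssim1$, the derivative is $\lesssim t^{-1/2}\,\tau^k_xh_{2t}$-type expressions. The mean value theorem on the segment $[y,y']$ then gives (ii), provided that along this segment, where $|y-y'|\le\sqrt t$, the quantities $|x-z|$ and $d_G(x,z)$ remain comparable to $|x-y|$ and $d_G(x,y)$ up to $+\sqrt t$. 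That comparability keeps the factors $(1+|x-y|/\sqrt t)^{-2}$ and $\exp(-c\,d_G^2/t)$ in the form obtained in Step 3, with a smaller constant $c$.
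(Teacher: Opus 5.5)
The paper gives no proof of this proposition; it is quoted from the cited Theorem 3.1, so your sketch has to stand on its own.

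\textbf{Steps 1, 2 and 4.} Steps 1 and 4 are sound: the inequality $\mathcal{A}\ge d_G(x,y)$ holds, as does $|y-\eta|\le\mathcal{A}$ (from $|\eta|\le|x|$), and the segment argument in Step 4 works once (i) is available. Step 2 contains one slip. Cauchy--Schwarz gives $\tau^k_xh_{2t}(-y)\lesssim\mu_k(B(x,\sqrt t))^{-1/2}\mu_k(B(y,\sqrt t))^{-1/2}$, not $\mu_k(B(x,\sqrt t))^{-1}$. You reach the volume at $x$ only after multiplying by the factor $e^{-d_G(x,y)^2/8t}$ from Step 1 and using $\mu_k(B(y,r))=\mu_k(B(\sigma y,r))$ together with doubling. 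This costs a smaller $c$, which is harmless.

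\textbf{Step 3 is a genuine gap.} This step carries the whole content of the cited theorem, and the mechanism you describe does not work. The coefficient $\langle\lambda,y\rangle^{-2}$ is large when $y$ is close to $\lambda^{\perp}$, no matter how large $|x-y|$ is. So it cannot produce a factor $t/|x-y|^2$. The drift term $\langle\nabla f,\lambda\rangle/\langle\lambda,y\rangle$ is equally singular. Hence a Duhamel expansion of $\Delta_k$ around $\Delta$ has no small parameter, and the ``reflection chains'' procedure is never specified. The alternative, bounding the $\mu_x$-mass near the vertices $\sigma(x)$, merely restates the claim: no quantitative information of that kind on R\"osler's measure is available. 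So (i) is not proved, and (ii), which you derive from (i), inherits the gap.

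\textbf{The missing idea is an exact identity.} From R\"osler's formula $\tau^k_xh_t(-y)=c\,t^{-d_k/2}e^{-(|x|^2+|y|^2)/(4t)}E_k\big(x,\tfrac{y}{2t}\big)$, the Leibniz rule $T_j(fg)=(\partial_jf)g+fT_jg$ for $G$-invariant $f$, and $T_{j,x}E_k(x,w)=w_jE_k(x,w)$, one gets
$T_{j,x}\tau^k_xh_t(-y)=\frac{y_j-x_j}{2t}\tau^k_xh_t(-y)$.
Now apply $T_{j,x}$ once more, using $T_j(x_jg)-x_jT_jg=g+\sum_{\lambda\in R}\frac{k(\lambda)}{2}\lambda_j^2\,g\circ\sigma_\lambda$. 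Summing over $j$ and using $\partial_t=\Delta_{k,x}$ on the kernel gives
\[
\frac{|x-y|^2}{4t^2}\,\tau^k_xh_t(-y)=\partial_t\tau^k_xh_t(-y)+\frac{d}{2t}\,\tau^k_xh_t(-y)+\frac{1}{2t}\sum_{\lambda\in R}k(\lambda)\,\tau^k_{\sigma_\lambda x}h_t(-y).
\]
The reflected terms have bounded nonnegative coefficients, and $\sigma_\lambda x$ has the same $d_G(\cdot,y)$ and the same $\mu_k(B(\cdot,\sqrt t))$ as $x$. Differentiating R\"osler's integral in $t$, as you do in $y$ in Step 4, gives $|\partial_t\tau^k_xh_t(-y)|\lesssim t^{-1}\tau^k_xh_{2t}(-y)$. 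Your Steps 1--2 then bound the whole right-hand side by $t^{-1}\mu_k(B(x,\sqrt t))^{-1}e^{-c\,d_G(x,y)^2/t}$. This yields the factor $t/|x-y|^2$ directly, with no analysis of $\mu_x$ and no reflection chains. With this in place, the rest of your outline, including Step 4, goes through.
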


\subsection{ Fractional Dunkl heat kernel}\label{subsec-fractional Dunkl heat}

We now turn to semigroups generated by fractional powers of $(-\Delta_k)$. For $0<s\leq 1$, define
$$e^{-t\,(-\Delta_k)^{s}}f(x):= \mathcal{F}_k^{-1}(e^{-t|\cdot|^{2s}} \mathcal{F}_kf)(x).$$
In fact, $e^{-t\,(-\Delta_k)^{s}}f(x)$ solves the fractional Dunkl heat equation
\begin{align*}
    \begin{cases}
       \partial_t u(x,t) + (-\Delta_k)^{s} \,u(x,t) &= 0,\, \quad (x,t)\in \mathbb{R}^d \times (0,\infty),\\
       \hfill u(x,0) &= f(x).
    \end{cases}
    \end{align*}
When $s=1$, this construction reduces to the Dunkl heat semigroup $\{e^{t\,\Delta_k}\}_{t\geq 0}$.

Since the Dunkl heat semigroup is known to be a strongly continuous contraction semigroup on $L^p(d\mu_k)$ (see \cite{RoslerDOTA}), Bochner’s subordination formula \cite{BochnerDEASP} yields the representation
\begin{equation}\label{Bochner sub ord formula}
e^{-t\,(-\Delta_k)^{s}}f(x)= \int_0^\infty e^{\,u\,\Delta_k}f(x)\, \eta_{t,s}(u) \, du.
\end{equation}
Here, $\eta_{t,s}$ is a probability density on $(0,\infty)$. Proceeding as before, one obtains a convolution formula
$$e^{-t\,(-\Delta_k)^{s}}f(x)= f \ast_k h_{t,s}(x)= \int_{\mathbb{R}^d}\tau^k_xh_{t,s}(-y)f(y) \, d\mu_k(y),$$
where $\tau^k_x h_{t, s}(-y)$ is referred to as the \emph{fractional Dunkl heat kernel}, and
$$h_{t, s}(x) = \mathcal{F}_k^{-1}(e^{-t|\cdot|^{2s}})(x) .$$

Although no explicit formula for $h_{t,s}$ is available, it admits the integral representation
$$h_{t, s}(x)= \int_0^\infty h_u(x)\, \eta_{t,s}(u) \, du .$$
So, $h_{t, s}$ is also a non-negative radial function. In particular, when $s=1/2$, the subordinate function is explicitly known, and in this case the kernel is known as the \emph{Dunkl Poisson kernel} (see \cite[Section 5]{HejnaHFCHF} for details).
\par In the next proposition, we collect a few important properties of the subordinate function that will be needed later.
\begin{prop}\label{Subordi func esti}
The function $\eta_{t,s}$ satisfies the following properties.
\begin{enumerate}[label=(\roman*)]
\item \label{eta 1} For all $t, u>0$,
$$\eta_{t,s}(u) = t^{-1/s}\,\eta_{1,s}(u/t^{1/s}).$$

\item \label{eta 4} For all $t>0, u>t^{1/s}$,
$$\eta_{t,s}(u) \sim {t}/{u^{1+ s}}.$$

\item \label{eta 5} There exists $c_s>0$ such that for $u \leq t^{1/s}$,
$$\eta_{t,s}(u) \sim t^{\frac{1}{2(1-s)}}u^{-\frac{2-s}{2(1-s)}}\, \exp({-c_s\,t^{\frac{1}{1-s}}u^{-\frac{s}{1-s}} }).$$
\end{enumerate}
\end{prop}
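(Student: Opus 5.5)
The plan is to work entirely with the Laplace transform characterization of the subordinator density. By Bochner's formula \eqref{Bochner sub ord formula} applied on the Fourier side, $\eta_{t,s}$ is the unique probability density on $(0,\infty)$ with
$$\int_0^\infty e^{-\lambda u}\,\eta_{t,s}(u)\,du=e^{-t\lambda^{s}},\qquad \lambda>0.$$
Take $\lambda=|\xi|^2$ and use uniqueness of the Dunkl transform.

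For \ref{eta 1}, substitute $u=t^{1/s}v$ in the Laplace transform of $t^{-1/s}\eta_{1,s}(u/t^{1/s})$. This gives $\int_0^\infty e^{-\lambda t^{1/s}v}\eta_{1,s}(v)\,dv=e^{-(\lambda t^{1/s})^s}=e^{-t\lambda^s}$, and uniqueness of the Laplace transform yields the scaling identity. With \ref{eta 1} in hand, both \ref{eta 4} and \ref{eta 5} reduce to statements about $\eta:=\eta_{1,s}$. Indeed, $u>t^{1/s}$ corresponds to $v=u/t^{1/s}>1$, and $t^{-1/s}v^{-1-s}=t/u^{1+s}$. Similarly, for $v\le 1$ the expression $v^{-\frac{2-s}{2(1-s)}}\exp(-c_s v^{-\frac{s}{1-s}})$ rescales exactly to the stated form, because $t^{-1/s}\,t^{\frac{2-s}{2s(1-s)}}=t^{\frac{1}{2(1-s)}}$ and $(t^{1/s})^{\frac{s}{1-s}}=t^{\frac1{1-s}}$. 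So it suffices to prove $\eta(v)\sim v^{-1-s}$ for $v\ge1$, and $\eta(v)\sim v^{-\frac{2-s}{2(1-s)}}e^{-c_sv^{-s/(1-s)}}$ for $v\le 1$.

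For the large-$v$ regime, use the series representation obtained by inverting the Laplace transform along a contour deformed around the negative axis (Pollard's formula):
$$\eta(v)=\frac1\pi\int_0^\infty e^{-vr}e^{-r^s\cos\pi s}\sin(r^s\sin\pi s)\,dr.$$
Expanding $e^{-r^se^{-i\pi s}}$ in a power series gives $\eta(v)=\frac1\pi\sum_{n\ge1}\frac{(-1)^{n+1}}{n!}\Gamma(ns+1)\sin(n\pi s)\,v^{-ns-1}$. The leading term is $\frac{\Gamma(1+s)\sin\pi s}{\pi}v^{-1-s}>0$. Hence $\eta(v)\sim v^{-1-s}$ for $v\ge V_0$. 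On the compact interval $[1,V_0]$, $\eta$ is continuous and strictly positive, since it is analytic and a positive stable density, so the comparison holds there trivially.

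For the small-$v$ regime, apply the saddle point method to the Bromwich integral $\eta(v)=\frac1{2\pi i}\int e^{v z-z^s}\,dz$. The phase $\phi(z)=vz-z^s$ has a real critical point $z_0=(s/v)^{1/(1-s)}$, and there $\phi(z_0)=-(1-s)s^{s/(1-s)}v^{-s/(1-s)}$. This identifies $c_s=(1-s)s^{s/(1-s)}$. The second derivative is $\phi''(z_0)=s(1-s)z_0^{s-2}$, and the Gaussian approximation produces a prefactor $(2\pi\phi''(z_0))^{-1/2}\sim v^{\frac{s-2}{2(1-s)}\cdot(-1)\cdot(-1)}$. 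A short computation shows this prefactor is $\asymp v^{-\frac{2-s}{2(1-s)}}$. The result is the genuine asymptotic $\eta(v)\sim v^{-\frac{2-s}{2(1-s)}}e^{-c_sv^{-s/(1-s)}}$ as $v\to0^+$. On $[v_0,1]$, positivity and continuity again give two-sided bounds, with any fixed constant absorbed.

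The main obstacle is this last step: the Laplace/saddle-point analysis near $v\to0$. It requires choosing a steepest-descent contour through $z_0$, for instance the curve on which $\operatorname{Im}\phi$ is constant. One must then check that the contributions away from $z_0$ are exponentially smaller, using $\operatorname{Re}(z^s)\ge \cos(\pi s/2)|z|^s$ on the contour after rescaling $z=z_0w$. If this becomes too delicate, I would instead cite the classical asymptotics of one-sided stable densities (Linnik; Zolotarev's book). I would also note that only two-sided comparability, not exact asymptotics, is needed, which allows some slack in the constants.
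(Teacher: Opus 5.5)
Your argument is correct, but it takes a genuinely different route from the paper. The paper does not prove this proposition: it quotes \ref{eta 1} and \ref{eta 4} from Grigor'yan (see also Rejeb) and \ref{eta 5} from Graczyk et al.

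You instead derive everything from the Laplace characterization $\int_0^\infty e^{-\lambda u}\eta_{t,s}(u)\,du=e^{-t\lambda^{s}}$:
\begin{itemize}
\item scaling by uniqueness of the Laplace transform, which is the standard argument;
\item reduction of \ref{eta 4} and \ref{eta 5} to $t=1$, where your exponent bookkeeping is right;
\item the tail $v^{-1-s}$ from Pollard's formula and its convergent series, whose leading coefficient $\Gamma(1+s)\sin(\pi s)/\pi$ is positive;
\item Linnik's small-$v$ asymptotics from the saddle point $z_0=(s/v)^{1/(1-s)}$, which correctly gives $c_s=(1-s)s^{s/(1-s)}$ and the prefactor $v^{-\frac{2-s}{2(1-s)}}$.
\end{itemize}
Your route buys a self-contained proof, an explicit $c_s$, and genuine asymptotics at $0$ and $\infty$, which is more than the two-sided comparability used later. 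The price is length; the paper's citation buys brevity.

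The step you flag as the main obstacle is easier than you fear. On the vertical line $z=z_0(1+i\tau)$, put $\theta=\arctan\tau$. Then
$\operatorname{Re}\phi(z)-\phi(z_0)=z_0^{s}\bigl(1-\cos(s\theta)/\cos^{s}\theta\bigr)$,
which is strictly negative for $\theta\neq0$ by strict concavity of $\log\cos$ on $(-\pi/2,\pi/2)$. So this line is already a descent path with a strict maximum at $z_0$. Laplace's method then applies with large parameter $z_0^{s}\asymp v^{-s/(1-s)}$, and no exact steepest-descent curve is needed.

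One point needs tightening. On the middle ranges $[v_0,1]$ and $[1,V_0]$ you need $\eta_{1,s}>0$, and this does not follow from analyticity plus nonnegativity, since analytic nonnegative functions can have isolated zeros. Strict positivity is a classical fact, manifest for instance from Kanter's integral representation of the one-sided stable density, whose integrand is positive. You should cite it as such.
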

The proofs of \ref{eta 1} and \ref{eta 4} can be found in \cite[p. 20]{GrigoryanHKAFTOMMS}; see also \cite[p. 4]{RejebSRRTTFDL}. A proof of \ref{eta 5} appears in \cite[p. 89]{GraczykTDEFSPOSS}.

\par We conclude this subsection by deriving a new estimate involving the integral of the time derivative of the subordinate function $\eta_{t,s}$.
\begin{prop}\label{esti of deriv of subordinate func} For any $u>0$, we have the estimate
   $$\int_0^\infty \big|\frac{\partial}{\partial t}\eta_{t,s}(u)\big|\, dt \lesssim \frac{1}{u}.$$
\end{prop}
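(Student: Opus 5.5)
The plan is to reduce everything to the single function $\eta_{1,s}$ through the scaling identity in Proposition \ref{Subordi func esti}\ref{eta 1}. Write $\eta_{t,s}(u)=t^{-1/s}\eta_{1,s}(ut^{-1/s})$ and differentiate in $t$:
$$\frac{\partial}{\partial t}\eta_{t,s}(u)= -\frac{1}{s}\,t^{-1/s-1}\Big(\eta_{1,s}(v)+v\,\eta_{1,s}'(v)\Big)\Big|_{v=ut^{-1/s}}.$$
Next substitute $v=ut^{-1/s}$, so that $t=(u/v)^s$ and $dt=s\,u^s v^{-s-1}\,dv$ up to sign. A direct computation gives $t^{-1/s-1}\,dt = s\,u^{-1}\,dv$. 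Hence
$$\int_0^\infty \Big|\frac{\partial}{\partial t}\eta_{t,s}(u)\Big|\,dt=\frac1u\int_0^\infty\big|\eta_{1,s}(v)+v\,\eta_{1,s}'(v)\big|\,dv = \frac1u\int_0^\infty \big|(v\,\eta_{1,s}(v))'\big|\,dv .$$
The problem is therefore reduced to showing that $\phi(v):=v\,\eta_{1,s}(v)$ has finite total variation on $(0,\infty)$. This constant depends only on $s$.

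To bound the total variation I would use the asymptotics in Proposition \ref{Subordi func esti}. Item \ref{eta 4} gives $\phi(v)\sim v^{-s}\to0$ as $v\to\infty$. Item \ref{eta 5} gives $\phi(v)\sim v^{1-\frac{2-s}{2(1-s)}}e^{-c_s v^{-s/(1-s)}}\to 0$ as $v\to 0$. So $\phi$ is continuous, nonnegative, bounded, and vanishes at both ends. If $\phi'$ changes sign only finitely many times (say $m$ times), then the total variation is at most $2m\sup\phi<\infty$, and we are done.

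The main obstacle is controlling the monotonicity of $\phi$, since two-sided "$\sim$" bounds alone do not control derivatives. I would try the following options in order.

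First, invoke the known unimodality of positive stable densities (Yamazato's theorem). This shows that $\eta_{1,s}$ is unimodal, and in fact that $\log\eta_{1,s}(e^x)$ is concave, since stable densities are known to be log-concave in $\log v$ (Simon). Then $\log\phi(e^x)=x+\log\eta_{1,s}(e^x)$ is concave, so $\phi$ is unimodal and its total variation equals $2\max\phi$.

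Alternatively, work directly from the Laplace-transform representation
$$\eta_{1,s}(v)=\frac1\pi\int_0^\infty e^{-vr}e^{-r^s\cos\pi s}\sin(r^s\sin\pi s)\,dr .$$
Differentiating under the integral gives $v\eta_{1,s}'(v)$ as an absolutely convergent integral. One then bounds $\int_0^\infty|v\eta'_{1,s}(v)|\,dv$ by splitting at $v=1$, using integration by parts in $r$ for large $v$ to get decay $v^{-1-s}$, and using the exponential smallness near $v=0$ from \ref{eta 5}. Either route gives $\int_0^\infty|\phi'|<\infty$, which proves the estimate.
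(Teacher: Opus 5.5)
Your substitution $v=ut^{-1/s}$ is correct. It gives the exact identity $\int_0^\infty|\partial_t\eta_{t,s}(u)|\,dt=u^{-1}\int_0^\infty|\phi'(v)|\,dv$ with $\phi(v)=v\eta_{1,s}(v)$. This is the paper's reduction: its split at $t=u^s$ is your split at $v=1$. The paper then asserts $|\phi'(v)|\lesssim\min\{v^{-1-s},1\}$ as a consequence of Proposition \ref{Subordi func esti}\ref{eta 4}--\ref{eta 5}, which, as you rightly note, two-sided size bounds cannot give. So everything rests on $\phi'\in L^1(0,\infty)$, and neither of your justifications delivers it as written.

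In your first route, the claim that $x\mapsto\log\eta_{1,s}(e^x)$ is concave is false for $s\in(1/2,1)$. The Simon result you have in mind (multiplicative strong unimodality of positive stable laws) holds only for $s\le 1/2$. You can see the failure from the convergent expansion $\eta_{1,s}(v)=\frac1\pi\sum_{n\ge1}\frac{(-1)^{n+1}}{n!}\Gamma(ns+1)\sin(n\pi s)\,v^{-ns-1}$. It gives $\log\eta_{1,s}(e^x)=C-(1+s)x+\beta e^{-sx}+O(e^{-2sx})$ with $\beta=-\Gamma(2s+1)\cos(\pi s)/\Gamma(s+1)>0$ for $s>1/2$, so the function is convex for large $x$. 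Unimodality of $\eta_{1,s}$ alone does not give your finitely-many-sign-changes hypothesis for $\phi'$ on the decreasing side.

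In your second route, the large-$v$ part is fine. Near $v=0$, however, item \ref{eta 5} bounds only $\eta_{1,s}$, not $\eta_{1,s}'$, which is exactly the size-to-derivative step you had ruled out. Nor can the integral representation you quote be estimated in absolute value there when $s\ge 1/2$: the factor $e^{-r^s\cos\pi s}$ does not decay, and the resulting bounds on $v\eta_{1,s}'(v)$ are not integrable at $0$.

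The repair is short and uses only the correct part of your first route: unimodality of $\eta_{1,s}$ (Yamazato) already suffices, without unimodality of $\phi$. Let $m$ be the mode, so $\eta_{1,s}'\ge0$ on $(0,m)$ and $\eta_{1,s}'\le0$ on $(m,\infty)$. By \ref{eta 4}--\ref{eta 5}, $v\eta_{1,s}(v)\to0$ at $0$ and at $\infty$. Write $|\phi'|\le\eta_{1,s}+v|\eta_{1,s}'|$ and integrate by parts on each side of $m$:
\[
\int_0^\infty|\phi'(v)|\,dv\le 1+\Big(m\eta_{1,s}(m)-\int_0^m\eta_{1,s}\Big)+\Big(m\eta_{1,s}(m)+\int_m^\infty\eta_{1,s}\Big)\le 2+2m\,\eta_{1,s}(m),
\]
a constant depending only on $s$.

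Your intended conclusion that $\phi$ is unimodal is nevertheless true, for a different reason. $\phi(e^x)$ is the density of $\log Z$ with $Z\sim\eta_{1,s}(v)\,dv$. From $\mathbb{E}[Z^{i\xi}]=\Gamma(1-i\xi/s)/\Gamma(1-i\xi)$, $\log Z$ is infinitely divisible with L\'evy density $y^{-1}\big((e^{sy}-1)^{-1}-(e^{y}-1)^{-1}\big)$ on $(0,\infty)$. The bracket is decreasing, so $\log Z$ is self-decomposable and Yamazato applies to it. With either patch your argument is complete, and it needs only $\phi'\in L^1$ rather than the pointwise bound the paper asserts.
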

\begin{proof}
 We note that
  $$\frac{\partial}{\partial t}\eta_{t,s}(u)= -\frac{1}{s}\, t^{-1-1/s}\Big(\eta_{1,s}(u/t^{1/s})+\frac{u}{t^{1/s}}\eta'_{1,s}(u/t^{1/s}) \Big).$$
  Also, from Proposition \ref{Subordi func esti} \ref{eta 5} and \ref{eta 4}, for any $u>0$ we have the estimate
  $$\Big|\eta_{1,s}(u)+u\,\eta'_{1,s}(u) \Big|\lesssim \min\{u^{-1-s}, 1\}.$$
  Hence
  \begin{eqnarray*}
   \int_0^\infty \big|\frac{\partial}{\partial t}\eta_{t,s}(u)\big|\, dt &=& \int_0^{u^s} \big|\frac{\partial}{\partial t}\eta_{t,s}(u)\big|\, dt +\int_{u^s}^\infty \big|\frac{\partial}{\partial t}\eta_{t,s}(u)\big|\, dt\\
   &\lesssim & \int_0^{u^s} u^{-1-s}\, dt +\int_{u^s}^\infty t^{-1-1/s}\, dt \lesssim \frac{1}{u}.
  \end{eqnarray*}
\end{proof}
\subsection{ Pointwise estimates for the fractional Dunkl heat kernel} 
In this section, we prove some pointwise bounds for the fractional Dunkl heat kernel and its time derivatives. In the special case where $s ={1}/{2}$, these recover the results for the Dunkl Poisson kernel in \cite[Proposition 5.1]{HejnaHFCHF}.
\begin{prop}\label{esti for fractional Dunkl heat kernel}
 For any $x, y\in \mathbb{R}^d$,
 \begin{enumerate}[label=(\roman*)]
 \item \label{frac heat i} the fractional Dunkl heat kernel satisfy 
 
 \begin{equation*}
    \tau^k_xh_{t,s}(-y) \lesssim \frac{1}{\mu_k \big(B(x, t^{\frac{1}{2s}}+d_G(x,y))\big)} \frac{t}{\big(t^{\frac{1}{2s}}+d_G(x,y)\big)^{2s}}.
 \end{equation*}
 \item \label{frac heat ii} the time derivative of the fractional Dunkl heat kernel satisfy 
 \begin{equation*}
   \big|\frac{\partial}{\partial t}\tau^k_xh_{t,s}(-y)\big| \lesssim \frac{1}{\mu_k \big(B(x, t^{\frac{1}{2s}}+d_G(x,y))\big)} \frac{1}{\big(t^{\frac{1}{2s}}+d_G(x,y)\big)^{2s}}.  
 \end{equation*}
 \end{enumerate}
\end{prop}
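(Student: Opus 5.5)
Both estimates follow by inserting the Dunkl heat kernel bound of Proposition \ref{Dunkl heat ker esti}\ref{heat i} into the subordination formula
$$\tau^k_x h_{t,s}(-y)=\int_0^\infty \tau^k_x h_u(-y)\,\eta_{t,s}(u)\,du,$$
which comes from \eqref{Bochner sub ord formula} and positivity. For the derivative we differentiate under the integral, $\partial_t\tau^k_x h_{t,s}(-y)=\int_0^\infty \tau^k_x h_u(-y)\,\partial_t\eta_{t,s}(u)\,du$. Write $\rho=d_G(x,y)$ and $R=t^{1/(2s)}+\rho$. Dropping the factor $(1+|x-y|/\sqrt{u})^{-2}\le 1$, we have
$$\tau^k_x h_u(-y)\lesssim \mu_k(B(x,\sqrt u))^{-1}e^{-c\rho^2/u}.$$

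\emph{Estimate (i).} We split the $u$-integral at $u=R^2$.

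For $u\ge R^2$, we use $\mu_k(B(x,\sqrt u))\ge \mu_k(B(x,R))$ and $u\ge t^{1/s}$. Proposition \ref{Subordi func esti}\ref{eta 4} gives $\eta_{t,s}(u)\sim t/u^{1+s}$. Hence this part is bounded by
$$\frac{1}{\mu_k(B(x,R))}\int_{R^2}^\infty \frac{t}{u^{1+s}}\,du\sim \frac{t}{\mu_k(B(x,R))R^{2s}}.$$

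For $u<R^2$, \eqref{VOLRADREL} gives $\mu_k(B(x,\sqrt u))^{-1}\lesssim \mu_k(B(x,R))^{-1}(R/\sqrt u)^{d_k}$. Two cases arise.

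\emph{Case $\rho\le t^{1/(2s)}$.} Then $R\sim t^{1/(2s)}$. On $t^{1/s}\le u<R^2$ we again have $\eta\sim t/u^{1+s}$ and $R/\sqrt u\lesssim 1$, so this piece is fine. On $u<t^{1/s}$, Proposition \ref{Subordi func esti}\ref{eta 5} and the scaling in \ref{eta 1} give
$$\eta_{t,s}(u)=t^{-1/s}\eta_{1,s}(u/t^{1/s}),$$
where $\eta_{1,s}(v)$ decays like $\exp(-c v^{-s/(1-s)})$ as $v\to0$. This superexponential decay absorbs the polynomial factor $(t^{1/s}/u)^{d_k/2}$. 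Substituting $u=t^{1/s}v$ yields $\lesssim \mu_k(B(x,R))^{-1}\sim \mu_k(B(x,R))^{-1}t/R^{2s}$.

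\emph{Case $\rho> t^{1/(2s)}$.} Then $R\sim\rho$, and we use the Gaussian factor: $(\rho/\sqrt u)^{d_k}e^{-c\rho^2/u}\lesssim_N (u/\rho^2)^{N}$. For $u\ge t^{1/s}$, with $\eta\sim t u^{-1-s}$,
$$\int_{t^{1/s}}^{\rho^2}(u/\rho^2)^N t u^{-1-s}\,du\lesssim t\rho^{-2s},$$
taking $N>s$. For $u<t^{1/s}$, we bound $(u/\rho^2)^N\le (t^{1/s}/\rho^2)^N$ and use $\int\eta\le 1$, giving $\lesssim (t^{1/s}/\rho^2)^{N}\le t/\rho^{2s}$ for $N\ge s$, since $t^{1/s}/\rho^2<1$.

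\emph{Estimate (ii).} We repeat the argument with $|\partial_t\eta_{t,s}(u)|$ in place of $\eta_{t,s}(u)$, using the formula from the proof of Proposition \ref{esti of deriv of subordinate func}:
$$|\partial_t\eta_{t,s}(u)|\lesssim t^{-1-1/s}\min\{(u/t^{1/s})^{-1-s},1\}.$$
This equals $u^{-1-s}$ for $u\ge t^{1/s}$, i.e. it is $\eta/t$ there. The large-$u$ and intermediate regions therefore give the bound of (i) divided by $t$, which is the claimed $\mu_k(B(x,R))^{-1}R^{-2s}$.

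For $u<t^{1/s}$, the crude bound $|\partial_t\eta|\lesssim t^{-1-1/s}$ loses the small-$u$ decay. Instead we invoke the sharper form from Proposition \ref{Subordi func esti}\ref{eta 5}: $|\eta_{1,s}+v\eta'_{1,s}|$ still carries the factor $\exp(-cv^{-s/(1-s)})$, up to powers of $v$. With this factor, the previous case analysis goes through and gives the bound $\mu_k(B(x,R))^{-1}t^{-1}$ in the case $R\sim t^{1/(2s)}$, and $\rho^{-2s}$ in the case $R\sim\rho$.

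\emph{Main obstacle.} The delicate point is this small-$u$ regime in (ii). We need the derivative $\eta'_{1,s}$ to inherit the exponential decay of $\eta_{1,s}$ near $0$, not merely boundedness, so that it can absorb the volume growth $(R/\sqrt u)^{d_k}$. I would justify this from the known asymptotic expansion of stable subordinators, or via analyticity/Cauchy estimates of $\eta_{1,s}$ in a sector. The differentiation under the integral sign also needs a dominated-convergence justification, for which the same bounds suffice.
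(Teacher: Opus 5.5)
Your argument is correct. For (i) it is essentially the paper's proof: subordination, the Gaussian bound of Proposition \ref{Dunkl heat ker esti}\ref{heat i}, the volume comparison \eqref{VOLRADREL}, and the two regimes of $\eta_{t,s}$. The paper splits case by case, at $u=t^{1/s}$ or at $u=d_G(x,y)^2$, and in the second case uses the global bound $\eta_{t,s}(u)\lesssim t\,u^{-1-s}$. You split once at $R^2$ and trade the Gaussian for $(u/\rho^2)^N$; the difference is only bookkeeping.

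For (ii) your route genuinely differs. The paper does not re-run the case analysis. With $g_s=\eta_{1,s}+v\,\eta_{1,s}'$, it proves $|g_s(v)|\lesssim\eta_{1,s}(av)$ for a fixed $a>1$; the extra power of $v$ near $0$ is absorbed by slightly lowering the exponential rate. Scaling then gives the pointwise domination
$$\Big|\frac{\partial}{\partial t}\tau^k_xh_{t,s}(-y)\Big|\lesssim\frac1t\,\tau^k_xh_{bt,s}(-y),\qquad b=a^{-s}<1,$$
so (ii) is just (i) at time $bt$. This is shorter and gives a reusable structural comparison. However, it needs $g_s$ to decay near $0$ at an exponential rate comparable to that of $\eta_{1,s}$.

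Your version asks for less. In the regime $\rho\le t^{1/(2s)}$ you only need $\int_0^1 v^{-d_k/2}|g_s(v)|\,dv<\infty$. When $\rho>t^{1/(2s)}$, the crude bound $|g_s|\lesssim1$ already suffices, because the Gaussian factor does the work.

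The obstacle you flag is real, and the paper shares it. Proposition \ref{Subordi func esti} gives only two-sided size asymptotics, which say nothing about $\eta_{1,s}'$. So every bound on $g_s$ used there needs an extra input: the tail bound $|g_s(v)|\lesssim v^{-1-s}$ as well as the decay at $0$. Suitable inputs are the Bromwich/saddle-point representation of the stable density or Cauchy estimates in a sector, as you suggest.
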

\begin{proof}[Proof of \ref{frac heat i}] {\bf Case I:} $t^{\frac{1}{2s}}\geq d_G(x,y)$.
    
\par   In this case $t^{\frac{1}{2s}}+d_G(x,y)\sim t^{\frac{1}{2s}}$.
Using the subordination formula, we break the integral in to two parts
\begin{eqnarray*}
  \tau^k_xh_{t, s}(-y)&=&\int_0^{t^{1/s}} \tau^k_xh_u(-y)\, \eta_{t,s}(u) \, du+ \int_{t^{1/s}}^\infty  \tau^k_xh_u(-y)\, \eta_{t,s}(u) \, du\\
  &=:& I + II.
\end{eqnarray*}

Using Proposition \ref{Dunkl heat ker esti} \ref{heat i}, \eqref{VOLRADREL},  and Proposition \ref{Subordi func esti} \ref{eta 5}, we dominate the first term by a simplified integral as follows
\begin{eqnarray*}
 I &\lesssim &  \int_0^{t^{1/s}} \frac{1}{\mu_k(B(x, \sqrt{u}))}\, \exp{(-c\,{d_G(x,\, y)^2}/{u})} \, \eta_{t,s}(u) \, du\\
 &\lesssim & \frac{1}{\mu_k(B(x, t^{\frac{1}{2s}}))}\, \int_0^{t^{1/s}} \Big( \frac{t^{\frac{1}{2s}}}{\sqrt{u}}\Big)^{d_k} \eta_{t,s}(u) \, du\\
 &\lesssim & \frac{1}{\mu_k(B(x, t^{\frac{1}{2s}}))}\, \int_0^{t^{1/s}} \Big( \frac{t^{\frac{1}{2s}}}{\sqrt{u}}\Big)^{d_k} t^{\frac{1}{2(1-s)}}u^{-\frac{2-s}{2(1-s)}}\, \exp({-c_s\,t^{\frac{1}{1-s}}u^{-\frac{s}{1-s}} }) \, du\\
 &= & \frac{t}{\mu_k(B(x, t^{\frac{1}{2s}}))}\,t^{A} \int_0^{t^{1/s}} u^{-B}\, \exp({-c_s\,t^{\frac{1}{1-s}}u^{-\frac{s}{1-s}} }) \, du,
 \end{eqnarray*}
where 
$$A=\frac{d_k}{2s}+ \frac{2s-1}{2-2s},$$
$$B=\frac{d_k}{2}+ \frac{2-s}{2(1-s)}>1.$$

Now making a change of variable $t^{\frac{1}{1-s}}u^{-\frac{s}{1-s}}$ going to $u$, we obtain 
\begin{eqnarray*}
    I &\lesssim &  \frac{t}{\mu_k(B(x, t^{\frac{1}{2s}}))}\,\frac{1}{t} \int_1 ^\infty u^{\frac{(B-1)(1-s)}{s}-1}e^{-c_s\, u} \,du\\
    &\lesssim &  \frac{t}{\mu_k(B(x, t^{\frac{1}{2s}}))}\,\frac{1}{t}\\
    &\lesssim & \frac{1}{\mu_k \big(B(x, t^{\frac{1}{2s}}+d_G(x,y))\big)} \frac{t}{\big(t^{\frac{1}{2s}}+d_G(x,y)\big)^{2s}}.
\end{eqnarray*}
For the second term, using Proposition \ref{Dunkl heat ker esti} \ref{heat i}, \eqref{VOLRADREL},  and Proposition \ref{Subordi func esti} \ref{eta 4}, we directly get
\begin{eqnarray*}
    II &\lesssim &  \int_{t^{1/s}}^\infty \frac{1}{\mu_k(B(x, \sqrt{u}))}\, \exp{(-c\,{d_G(x,\, y)^2}/{u})} \, \eta_{t,s}(u) \, du\\
 &\lesssim & \frac{1}{\mu_k(B(x, t^{\frac{1}{2s}}))}\, \int_{t^{1/s}}^\infty \Big( \frac{t^{\frac{1}{2s}}}{\sqrt{u}}\Big)^{d} \eta_{t,s}(u) \, du\\
 &\lesssim & \frac{1}{\mu_k(B(x, t^{\frac{1}{2s}}))}\, \int_{t^{1/s}}^\infty \Big( \frac{t^{\frac{1}{2s}}}{\sqrt{u}}\Big)^{d}\frac{t}{u^{1+s}}\, du\\
 &= &  \frac{1}{\mu_k(B(x, t^{\frac{1}{2s}}))}\\
    &\lesssim & \frac{1}{\mu_k \big(B(x, t^{\frac{1}{2s}}+d_G(x,y))\big)} \frac{t}{\big(t^{\frac{1}{2s}}+d_G(x,y)\big)^{2s}}.
\end{eqnarray*}
 {\bf Case II:} $t^{\frac{1}{2s}}< d_G(x,y)$.
 \par   In this case $t^{\frac{1}{2s}}+d_G(x,y)\sim d_G(x,y)$. Here also, we break the integral in to two parts
\begin{eqnarray*}
  \tau^k_xh_{t, s}(-y)&=&\int_0^{d_G(x,y)^2} \tau^k_xh_u(-y)\, \eta_{t,s}(u) \, du+ \int_{d_G(x,y)^2}^\infty  \tau^k_xh_u(-y)\, \eta_{t,s}(u) \, du\\
  &=:& I' + II'.
\end{eqnarray*}
Similarly, using Proposition \ref{Dunkl heat ker esti} \ref{heat i}, \eqref{VOLRADREL}, properties of the exponential function  and Proposition \ref{Subordi func esti} \ref{eta 4}, we compute 
\begin{eqnarray*}
    I' &\lesssim &  \int_0^{d_G(x,y)^2} \frac{1}{\mu_k(B(x, \sqrt{u}))}\, \exp{(-c\,{d_G(x,\, y)^2}/{u})} \, \eta_{t,s}(u) \, du\\
 &\lesssim & \frac{1}{\mu_k(B(x, d_G(x,y)))}\, \int_0^{d_G(x,y)^2} \Big( \frac{d_G(x,y)}{\sqrt{u}}\Big)^{d_k} \exp{(-c\,{d_G(x,\, y)^2}/{u})}\,\eta_{t,s}(u) \, du\\
 &\lesssim & \frac{1}{\mu_k(B(x, d_G(x,y)))}\, \int_0^{d_G(x,y)^2} \Big( \frac{d_G(x,y)}{\sqrt{u}}\Big)^{d_k}\Big(\frac{u}{d_G(x,y)^2}\Big)^{1+d_k/2}\frac{t}{u^{1+s}}\, du\\
 &= &  \frac{t}{\mu_k(B(x, d_G(x,y)))} \frac{1}{d_G(x,y)^2}\int_0^{d_G(x,y)^2} u^{-s}\, du\\
    &\lesssim & \frac{1}{\mu_k \big(B(x, t^{\frac{1}{2s}}+d_G(x,y))\big)} \frac{t}{\big(t^{\frac{1}{2s}}+d_G(x,y)\big)^{2s}}.
\end{eqnarray*}
The estimate for $II'$ can be done in the same way as $II$, and hence omitted. 
\end{proof}

\begin{proof}[Proof of \ref{frac heat ii}] By using Proposition \ref{Subordi func esti} \ref{eta 1}, we can control the time derivative as follows
\begin{eqnarray}\label{1st step domi of deri of frac heat}
   &&\big|\frac{\partial}{\partial t}\tau^k_xh_{t,s}(-y)\big|\\
   &\leq & \int_0^\infty \tau^k_x h_u(-y) \big|\frac{\partial}{\partial t}\eta_{t,s}(u)\big|\, du \nonumber\\
   &=& \int_0^\infty \tau^k_x h_u(-y)\, \big| -\frac{1}{s}\, t^{-1-1/s}\left(\eta_{1,s}(u/t^{1/s})+\frac{u}{t^{1/s}}\eta'_{1,s}(u/t^{1/s}) \right)\big| \, du.\nonumber
\end{eqnarray}
For any $u>0$,  let us define 
$$g_s(u)= \eta_{1,s}(u)+u\,\eta'_{1,s}(u).$$
Applying the bounds in Proposition \ref{Subordi func esti} \ref{eta 5} and \ref{eta 4} for the subordinate function, we arrive at the estimate
$$|g_s(u)| \lesssim
		\left\{
		\begin{array}{ll}
			u^{-\frac{2+s}{2(1-s)}}\exp{\big(-c_s\, u^{-\frac{s}{1-s}} \big)}  & \mbox{if } u\leq 1, \\
			u^{-1-s} & \mbox{if } u> 1.
		\end{array}
		\right.$$
A direct comparison between $g_s(u)$ and $\eta_{1,s}(u)$ leads to the appearance of negative powers of $u$ near the origin, which makes it difficult to obtain the required estimates. To overcome this issue, we employ the following trick. Let us fix $a>1$. Then it is clear from Proposition \ref{Subordi func esti} \ref{eta 4} that 
$$|g_s(u)| \lesssim \eta_{1,s}(au) \text{ for } u>1.$$
Also, for $0<u\leq 1$, using Proposition \ref{Subordi func esti} \ref{eta 5} we have that
\begin{eqnarray*}
    \frac{|g_s(u)|}{\eta_{1,s}(au)} &\lesssim & u^{-\frac{s}{1-s}} \exp{\big(-c_s\, u^{-\frac{s}{1-s}}(1-a^{-\frac{s}{1-s}}) \big)}\\
    & \lesssim & 1.
\end{eqnarray*}
Therefore, for any $u>0$, we get
\begin{equation}\label{the best so fat for eta}
   |g_s(u)| \lesssim \eta_{1,s}(au). 
\end{equation}
Let $b=a^{-s}$. Using \eqref{the best so fat for eta} in \eqref{1st step domi of deri of frac heat} and then applying Proposition \ref{Subordi func esti} \ref{eta 1}, we can write
\begin{eqnarray*}
    \big|\frac{\partial}{\partial t}\tau^k_xh_{t,s}(-y)\big|&\lesssim& \int_0^\infty \tau^k_x h_u(-y)\,  t^{-1-1/s}\eta_{1,s}(au/t^{1/s})\, du\\
 &\lesssim& \frac{1}{t}\int_0^\infty \tau^k_x h_u(-y)\,  \eta_{\,b\,t,s}(u)\, du\\
 &=& \frac{1}{t}\,\tau^k_xh_{\,b\,t, s}(-y).
\end{eqnarray*}
Since $b<1$, the bounds in part \ref{frac heat ii} follow from those in part \ref{frac heat i}.
\end{proof}


\section{Uniform boundedness of the partial sum operators}\label{Sec-boundeness of partial sum op} 
In this section, we study the uniform boundedness of the partial sum operator $\mathcal{T}_{\N}$. We prove results analogous to Theorem \ref{main thm for maximal op}, which will later be used in the proof of Theorem \ref{main thm for maximal op}. Throughout this section (only), we do not assume that the sequence $\{a_j\}_{j\in \mathbb{Z}}$ is lacunary. The condition that $\{a_j\}_{j\in \mathbb{Z}}$ is a lacunary sequence is not required for the results in this section. We begin with the following proposition.

\begin{prop}\label{L2 bddness of TN}
 There exists $C=C(d, k, G, s, \|\{v_j\}\|_{\ell^\infty(\mathbb{Z})})>0$ and independent of $\N$ such that 
 $$\|\mathcal{T}_{\N}f\|_{L^2(d\mu_k)} \leq C \|f\|_{L^2(d\mu_k)}.$$
\end{prop}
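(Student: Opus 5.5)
The proof goes through the Dunkl transform and Plancherel. Since each $e^{-a(-\Delta_k)^s}$ is the Dunkl multiplier with symbol $e^{-a|\xi|^{2s}}$, the partial sum operator $\mathcal{T}_{\N}$ is itself a Dunkl multiplier operator:
$$\mathcal{F}_k(\mathcal{T}_{\N}f)(\xi)=m_{\N}(\xi)\,\mathcal{F}_kf(\xi),\qquad m_{\N}(\xi)=\sum_{j=N_1}^{N_2} v_j\Big(e^{-a_{j+1}|\xi|^{2s}}-e^{-a_j|\xi|^{2s}}\Big).$$
This first holds for $f$ in a dense class, for instance Schwartz functions, where all transforms are defined pointwise. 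It then extends to $L^2(d\mu_k)$ once the multiplier is shown to be bounded.

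By the Plancherel identity for $\mathcal{F}_k$, it suffices to prove $\sup_{\N}\|m_{\N}\|_{L^\infty}\le C$ with $C$ independent of $\N$.

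For fixed $\xi\neq 0$, set $\lambda=|\xi|^{2s}>0$. Because the sequence $\{a_j\}$ is increasing, each difference $e^{-a_j\lambda}-e^{-a_{j+1}\lambda}$ is nonnegative. Hence
$$|m_{\N}(\xi)|\le \|\{v_j\}\|_{\ell^\infty(\mathbb{Z})}\sum_{j=N_1}^{N_2}\Big(e^{-a_j\lambda}-e^{-a_{j+1}\lambda}\Big).$$
The sum on the right telescopes to $e^{-a_{N_1}\lambda}-e^{-a_{N_2+1}\lambda}$. If all $a_j>0$, this is at most $1$. If some $a_j$ are negative, one has to assume, as the paper does, that the $a_j$ are positive so that the semigroup is defined. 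At $\xi=0$ the multiplier vanishes.

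Therefore $\|m_{\N}\|_{L^\infty}\le \|\{v_j\}\|_{\ell^\infty(\mathbb{Z})}$, and so
$$\|\mathcal{T}_{\N}f\|_{L^2(d\mu_k)}\le \|\{v_j\}\|_{\ell^\infty(\mathbb{Z})}\|f\|_{L^2(d\mu_k)}.$$
No lacunarity is used, in line with the remark at the start of the section.

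The only point requiring care is the identification of $\mathcal{T}_{\N}$ with the multiplier on all of $L^2$. Each semigroup operator is defined as $\mathcal{F}_k^{-1}(e^{-t|\cdot|^{2s}}\mathcal{F}_kf)$, which is well defined on $L^2$ by the unitarity of $\mathcal{F}_k$. A finite linear combination of these operators is then exactly the multiplier operator with symbol $m_{\N}$, so this step is routine and the argument has no real obstacle.
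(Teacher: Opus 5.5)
Your proof is correct and is essentially the paper's argument. Both use Plancherel for $\mathcal{F}_k$ to reduce the claim to a uniform bound on the multiplier $m_{\N}$. Your telescoping of the nonnegative differences $e^{-a_j|\xi|^{2s}}-e^{-a_{j+1}|\xi|^{2s}}$ is the same bound the paper gets by writing each difference as $\int_{a_j}^{a_{j+1}}\partial_t e^{-t|\xi|^{2s}}\,dt$ and dominating the sum by $\int_0^\infty|\xi|^{2s}e^{-t|\xi|^{2s}}\,dt=1$. Your version also yields the explicit constant $\|\{v_j\}\|_{\ell^\infty(\mathbb{Z})}$.
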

\begin{proof}
The proof follows in the same way as in the case $s=1$, by applying the Dunkl--Plancherel formula and Fubini's theorem:
\begin{eqnarray*}
\|\mathcal{T}_{\N}f\|^2_{L^2(d\mu_k)} &=& \int_{\mathbb{R}^d}  \Big|\mathcal{F}_k\Big(\sum\limits_{j=N_1}^{N_2} v_j \Big(e^{-a_{j+1}(-\Delta_k)^s}f - e^{-a_j(-\Delta_k)^s} f\Big)\Big)(\xi)\Big|^2\, d\mu_k(\xi) \\
&=& \int_{\mathbb{R}^d}  \Big|\sum\limits_{j=N_1}^{N_2} v_j \int_{a_j}^{a_{j+1}}\frac{\partial}{\partial t}\mathcal{F}_k\big(e^{-t(-\Delta_k)^s}f \big)(\xi)\, dt\Big|^2\, d\mu_k(\xi) \\
&\leq & \|\{v_j\}\|_{\ell^\infty(\mathbb{Z})}\int_{\mathbb{R}^d}  \Big( \int_{0}^{\infty}|\xi|^{2s}\, e^{-t\,|\xi|^{2s}}\,|\mathcal{F}_kf(\xi)|\, dt\Big)^2\, d\mu_k(\xi)\\
&\lesssim & \|f\|_{L^2(d\mu_k)}.
\end{eqnarray*}.      
\end{proof}

In the next proposition, we show that the kernel of the operator $\mathcal{T}_{\N}$ satisfies the kernel conditions for Dunkl--Calder\'on--Zygmund operators uniformly in $\N$. We remark that in \cite{DasBODTFDHS}, the authors established these estimates for the kernel of the partial sum operator associated with the Dunkl heat semigroup by first proving that the operator is a Dunkl multiplier operator and then invoking the known results for the kernels of Dunkl multiplier operators. In the fractional heat case, however, we directly derive these estimates for the kernel by suing subordination formula. We also remark that such direct and simpler computations are possible in the heat semigroup case as well.
\begin{prop}\label{kernel of TN}
 For any $f\in C_c^\infty (\mathbb{R}^d)$, $\mathcal{T}_{\N}$ can be written as 
 $$\mathcal{T}_{\N}f(x)= \int_{\mathbb{R}^d}K_{\N}(x,y) \, f(y)\, d\mu_k(y),$$
 where $$K_{\N}(x,y)=\sum\limits_{j=N_1}^{N_2} v_j \big(\tau^k_x h_{a_{j+1},s} (-y)- \tau^k_x h_{a_{j},s} (-y)\big).$$
 Also, there exists $C=C(d, k, G, s, \|\{v_j\}\|_{\ell^\infty(\mathbb{Z})})>0$ and independent of $\N$ such that
 \begin{eqnarray}\label{size estimate old}
|K_{\N}(x,y)|
&\leq & \frac{C}{\mu_k\big(B(x, d_G(x,y))\big)}\frac{d_G(x,y)}{|x-y|}
\end{eqnarray}
 for $d_G(x,y)>0;$
\begin{eqnarray}\label{smtness estimate last y changing}
 |K_{\N}(x,y)-K_{\N}(x,y')|
&\leq & \frac{C}{\mu_k\big(B(x, d_G(x,y))\big)}\frac{|y-y'|}{|x-y|}
\end{eqnarray}
for $|y-y'|<d_G(x,y)/2$; and 
\begin{eqnarray}\label{smtness estimate last x changing}
 |K_{\N}(x,y)-K_{\N}(x',y)|
&\leq & \frac{C}{\mu_k\big(B(x, d_G(x,y))\big)}\frac{|x-x'|}{|x-y|}
\end{eqnarray}
for $|x-x'|<d_G(x,y)/2$.
\end{prop}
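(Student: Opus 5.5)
The kernel representation follows at once from the convolution formula $e^{-t(-\Delta_k)^s}f=f\ast_k h_{t,s}$ by linearity. For $f\in C_c^\infty$ the sum is finite, so no convergence issue arises.

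For the estimates, the plan is to write each difference as a time integral,
$$\tau^k_x h_{a_{j+1},s}(-y)-\tau^k_x h_{a_j,s}(-y)=\int_{a_j}^{a_{j+1}}\frac{\partial}{\partial t}\tau^k_xh_{t,s}(-y)\,dt .$$
This gives
$$|K_{\N}(x,y)|\le \|\{v_j\}\|_{\ell^\infty}\int_0^\infty\Big|\frac{\partial}{\partial t}\tau^k_xh_{t,s}(-y)\Big|\,dt .$$
That quantity is independent of $\N$ and of monotonicity beyond $a_j$ increasing, which is why lacunarity is not needed. For the size estimate \eqref{size estimate old} I would not bound the time derivative directly. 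Instead I would use the subordination identity together with Proposition \ref{esti of deriv of subordinate func}:
$$\int_0^\infty\Big|\partial_t\tau^k_xh_{t,s}(-y)\Big|dt\le\int_0^\infty \tau^k_xh_u(-y)\int_0^\infty|\partial_t\eta_{t,s}(u)|\,dt\,du\lesssim\int_0^\infty \tau^k_xh_u(-y)\,\frac{du}{u}.$$
Then I insert Proposition \ref{Dunkl heat ker esti}\ref{heat i}, keeping the factor $(1+|x-y|/\sqrt u)^{-2}$. For $u\le d_G(x,y)^2$, the Gaussian $\exp(-c\,d_G^2/u)$ together with \eqref{VOLRADREL} gives the integrand $\lesssim \mu_k(B(x,d_G))^{-1}(u/d_G^2)^{N}u^{-1}\cdot u/|x-y|^2$. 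Integrating yields $\lesssim \mu_k(B(x,d_G))^{-1}d_G^2/|x-y|^2$, which is at most $\mu_k(B(x,d_G))^{-1}d_G/|x-y|$. For $u>d_G^2$ I bound $\mu_k(B(x,\sqrt u))^{-1}\le \mu_k(B(x,d_G))^{-1}(d_G/\sqrt u)^{d}$ and keep $\min(1,u/|x-y|^2)$. The integral $\int_{d_G^2}^\infty (d_G/\sqrt u)^{d}\min(1,u/|x-y|^2)\,du/u$ then splits at $u=|x-y|^2$. Since $d\ge2$, it is $\lesssim (d_G/|x-y|)^{2}$ up to a logarithm when $d=2$. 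Any power $\epsilon\in(0,1]$ is acceptable for Theorem \ref{weighted calderon Zygmund bddness thm}, and the claimed exponent $1$ is available since $d\ge 2$ leaves room. The case $d=2$ is handled by splitting with $\min(1,\sqrt u/|x-y|)$ instead.

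The smoothness estimates \eqref{smtness estimate last y changing} and \eqref{smtness estimate last x changing} follow the same route using Proposition \ref{Dunkl heat ker esti}\ref{heat ii}. The bound $\int_0^\infty|\tau^k_xh_u(-y)-\tau^k_xh_u(-y')|\,du/u$ is split at $u=|y-y'|^2$. For $u\le|y-y'|^2$, I use the size bounds at both $y$ and $y'$. Since $|y-y'|<d_G(x,y)/2$, we have $d_G(x,y')\sim d_G(x,y)$, and the Gaussian makes this piece tiny, namely $\lesssim (|y-y'|/d_G)^{N}$ times the main factor. For $u>|y-y'|^2$, the Lipschitz bound gives an extra factor $|y-y'|/\sqrt u$, and the same integration as above produces $\mu_k(B(x,d_G))^{-1}|y-y'|/|x-y|$. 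For the $x$-variable I use symmetry $\tau^k_xh_u(-y)=\tau^k_{-y}h_u(x)$ to transfer \ref{heat ii} to the first variable. I also need $\mu_k(B(x',d_G))\sim\mu_k(B(x,d_G))$ when $|x-x'|<d_G/2$, which holds by \eqref{Volofball}.

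The main obstacle I expect is making the $du/u$ integral produce the full ratio $d_G/|x-y|$ (respectively $|y-y'|/|x-y|$) rather than a logarithm. This requires careful use of both the Gaussian in $d_G$ and the polynomial decay in $|x-y|$, splitting the $u$-range at $d_G^2$ and $|x-y|^2$. If the exact exponent $1$ fails in low dimension, I would settle for some $\epsilon<1$, which suffices for later applications.
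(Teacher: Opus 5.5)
Your proposal is correct and follows essentially the paper's route: you reduce $|K_{\N}(x,y)|$ and its differences to $\int_0^\infty \tau^k_xh_u(-y)\,\frac{du}{u}$ via Proposition~\ref{esti of deriv of subordinate func}, insert Proposition~\ref{Dunkl heat ker esti}, split the $u$-integral at $d_G(x,y)^2$, and get the $x$-smoothness by symmetry. Your hedging about the exponent is unnecessary: the paper uses $(1+|x-y|/\sqrt u)^{-2}\le \sqrt u/|x-y|$ for all $u$ (your ``$d=2$'' variant), which gives exponent $1$ directly for every $d\ge 2$ (even your $d=2$ logarithm is harmless, since $t^2\log(e/t)\lesssim t$ on $(0,1]$), and for $\sqrt u<|y-y'|$ it simply multiplies the size bound by $|y-y'|/\sqrt u>1$ rather than using the Gaussian.
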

It suffices to establish \eqref{size estimate old} and \eqref{smtness estimate last y changing}. The estimate \eqref{smtness estimate last x changing} follows similarly from the argument for \eqref{smtness estimate last y changing} by symmetry.
 \begin{proof}[Proof of \eqref{size estimate old}] We note that, by applying Proposition \ref{esti of deriv of subordinate func}, we can write
\begin{eqnarray}\label{int 1 by u factor}
 | K_{\N}(x,y)|&\leq & \sum\limits_{j=N_1}^{N_2} \big|\,v_j \big(\tau^k_x h_{a_{j+1},s} (-y)- \tau^k_x h_{a_{j},s} (-y)\,\big)\big| \\
  &\lesssim & \sum\limits_{j=-\infty}^{\infty} \big|\int_{a_j}^{a_{j+1}}\frac{\partial}{\partial t}\tau^k_xh_{t,s}(-y)\, dt\,\big|\nonumber\\
  &\leq &  \int_0^\infty \tau^k_xh_u(-y)\, \Big(\int_0^\infty\big|\frac{\partial}{\partial t}\eta_{t,s}(u) \big| \,du \Big)\,dt\nonumber\\
  &\lesssim &  \int_0^\infty \tau^k_xh_u(-y)\,\frac{du}{u}.\nonumber
\end{eqnarray}

We apply Proposition \ref{Dunkl heat ker esti} \ref{heat i} in \eqref{int 1 by u factor},  and decompose $K_{\N}(x,y)$ into two parts as follows
\begin{eqnarray*}
   &&| K_{\N}(x,y)|\\
   &\lesssim & \int_0^\infty  \frac{1}{\mu_k(B(x, \sqrt{u}))}\,\Big(1+\frac{|x-y|}{\sqrt{u}}\Big)^{-2} \exp{(-c\,{d_G(x,\, y)^2}/{u})}\,\frac{du}{u}\\
   &\leq & \int_0^\infty  \frac{1}{\mu_k(B(x, \sqrt{u}))}\,\frac{\sqrt{u}}{|x-y|}\, \exp{(-c\,{d_G(x,\, y)^2}/{u})}\,\frac{du}{u}\\
   &\lesssim & \frac{1}{|x-y|}\int_0^\infty  \frac{\exp{(-c\,{d_G(x,\, y)^2}/{u})}}{\mu_k(B(x, \sqrt{u}))}\,\frac{du}{\sqrt{u}}\\
   &= & \frac{1}{|x-y|}\int_0^{d_G(x,y)^2}  \frac{\exp{(-c\,{d_G(x,\, y)^2}/{u})}}{\mu_k(B(x, \sqrt{u}))}\,\frac{du}{\sqrt{u}} \\
   &&+ \frac{1}{|x-y|}\int_{d_G(x,y)^2}^\infty  \frac{\exp{(-c\,{d_G(x,\, y)^2}/{u})}}{\mu_k(B(x, \sqrt{u}))}\,\frac{du}{\sqrt{u}}\\
   &=:& I +II.
\end{eqnarray*}
Applying the inequality \eqref{VOLRADREL} and properties of the exponential function, we obtain the required estimate for the first term as follows
\begin{eqnarray*}
   I &\lesssim & \frac{1}{|x-y|}\, \frac{1}{\mu_k\big(B(x, d_G(x,y))\big)}\int_0^{d_G(x,y)^2} \Big( \frac{d_G(x,y)}{\sqrt{u}}\Big)^{d_k} \exp{(-c\,{d_G(x,\, y)^2}/{u})}\,\frac{du}{\sqrt{u}}\\
   &\lesssim & \frac{1}{|x-y|}\, \frac{1}{\mu_k\big(B(x, d_G(x,y))\big)}\int_0^{d_G(x,y)^2} \Big( \frac{d_G(x,y)}{\sqrt{u}}\Big)^{d_k}\, \Big( \frac{u}{d_G(x,y)^2}\Big)^{\frac{d_k+1}{2}}\,\frac{du}{\sqrt{u}}\\
   &\lesssim & \frac{1}{\mu_k\big(B(x, d_G(x,y))\big)}\frac{d_G(x,y)}{|x-y|}.
\end{eqnarray*}
For the second term, the calculations are even simpler; in fact, by applying \eqref{VOLRADREL}, we obtain
\begin{eqnarray*}
  II &\lesssim & \frac{1}{|x-y|}\, \frac{1}{\mu_k\big(B(x, d_G(x,y))\big)}\int_{d_G(x,y)^2}^\infty \Big( \frac{d_G(x,y)}{\sqrt{u}}\Big)^{d} \exp{(-c\,{d_G(x,\, y)^2}/{u})}\,\frac{du}{\sqrt{u}}\\
   &\lesssim & \frac{1}{|x-y|}\, \frac{1}{\mu_k\big(B(x, d_G(x,y))\big)}\, d_G(x,y)^d\int_{d_G(x,y)^2}^\infty u^{-\frac{d+1}{2}}\, du\\
   &\lesssim & \frac{1}{\mu_k\big(B(x, d_G(x,y))\big)}\frac{d_G(x,y)}{|x-y|}, 
\end{eqnarray*}
 where in the last step we have used the fact that $d\geq 2$. 
\end{proof}
\begin{proof}[Proof of \eqref{smtness estimate last y changing}]
Using Proposition \ref{esti of deriv of subordinate func}, as in the proof of \eqref{size estimate old}, we similarly come across the integral,
\begin{eqnarray}\label{int by u factor for the diff}
 | K_{\N}(x,y)-K_{\N}(x,y')|
  &\lesssim &  \int_0^\infty |\tau^k_xh_u(-y)-\tau^k_xh_u(-y')|\,\frac{du}{u}.
\end{eqnarray}
Now if $|y-y'|\leq \sqrt{u}$, using Proposition \ref{Dunkl heat ker esti} \ref{heat ii}, we get an estimate
\begin{eqnarray*}
 && |\tau^k_xh_u(-y)-\tau^k_xh_u(-y')|\\ &\lesssim & \frac{|y-y'|}{\sqrt{u}} \frac{1}{\mu_k(B(x, \sqrt{u}))}\,\Big(1+\frac{|x-y|}{\sqrt{u}}\Big)^{-2} \exp{(-c\,{d_G(x,\, y)^2}/{u})}\\
  &\leq & \frac{|y-y'|}{|x-y|} \frac{1}{\mu_k(B(x, \sqrt{u}))}\, \exp{(-c\,{d_G(x,\, y)^2}/{u})}.
\end{eqnarray*}
Also, when $|y - y'| > \sqrt{u}$, using Proposition \ref{Dunkl heat ker esti} \ref{heat i}, together with the fact that $|y - y'| < d_G(x,y)/2$ implies $|x - y| \sim |x - y'|$ and $d_G(x,y) \sim d_G(x,y')$; we obtain the same estimate
\begin{eqnarray*}
 && |\tau^k_xh_u(-y)-\tau^k_xh_u(-y')|\\ 
  &\lesssim &  \frac{1}{\mu_k(B(x, \sqrt{u}))}\,\Big(1+\frac{|x-y|}{\sqrt{u}}\Big)^{-2} \exp{(-c\,{d_G(x,\, y)^2}/{u})}\\
  &&+   \frac{1}{\mu_k(B(x, \sqrt{u}))}\,\Big(1+\frac{|x-y'|}{\sqrt{u}}\Big)^{-2} \exp{(-c\,{d_G(x,\, y')^2}/{u})}\\
  &\lesssim &  \frac{1}{\mu_k(B(x, \sqrt{u}))}\,\Big(1+\frac{|x-y|}{\sqrt{u}}\Big)^{-2} \exp{(-c\,{d_G(x,\, y)^2}/{u})}\\
  &\leq & \frac{|y-y'|}{\sqrt{u}} \frac{1}{\mu_k(B(x, \sqrt{u}))}\,\Big(1+\frac{|x-y|}{\sqrt{u}}\Big)^{-2} \exp{(-c\,{d_G(x,\, y)^2}/{u})}\\
  &\leq & \frac{|y-y'|}{|x-y|} \frac{1}{\mu_k(B(x, \sqrt{u}))}\, \exp{(-c\,{d_G(x,\, y)^2}/{u})}.
\end{eqnarray*}
Thus, from the above estimates and \eqref{int by u factor for the diff}, we can write
\begin{eqnarray*}
&& | K_{\N}(x,y)-K_{\N}(x,y')|\\
  &\lesssim & \frac{|y-y'|}{|x-y|} \int_0^\infty \frac{1}{\mu_k(B(x, \sqrt{u}))}\, \exp{(-c\,{d_G(x,\, y)^2}/{u})}\,\frac{du}{u}\\
  & = & \frac{|y-y'|}{|x-y|} \Big[ \int_0^{d_G(x,y)^2} \frac{1}{\mu_k(B(x, \sqrt{u}))}\, \exp{(-c\,{d_G(x,\, y)^2}/{u})}\,\frac{du}{u}\\
  && + \int_{d_G(x,y)^2}^\infty \frac{1}{\mu_k(B(x, \sqrt{u}))}\, \exp{(-c\,{d_G(x,\, y)^2}/{u})}\,\frac{du}{u}\,\Big].
  \end{eqnarray*}
The rest of the proof can be carried out in the same way as the proof of \eqref{size estimate old}.
\end{proof}
The next theorem is the main result of this section, which establishes the uniform boundedness of $\mathcal{T}_{\N}$.
\begin{thm}\label{bddness thm of partial sum}
The following are true.
\begin{enumerate}[label=(\roman*)]
    \item \label{TN p} If $1<p<\infty$ and $w$ is a $G$-invariant weight such that $w\in A^k_p$, then for any $f\in L^p(\mathbb{R}^d, w\,d\mu_k)$ we have the strong type inequality
    $$\Big(\int_{\mathbb{R}^d}|\mathcal{T}_{\N}f(x)|^p w(x)\, d\mu_k(x)  \Big)^{1/p}\leq C\, \Big(\int_{\mathbb{R}^d}|f(x)|^p w(x)\, d\mu_k(x)  \Big)^{1/p},$$
    where $C=C(d, k, G, s, p, w, \|\{v_j\}\|_{\ell^\infty(\mathbb{Z})})$. 

    \item \label{TN 1} If  $w$ is a $G$-invariant weight such that $w\in A^k_1$, then for any $f\in L^1(\mathbb{R}^d, w\,d\mu_k)$ we have the weak type inequality
    $$\sup\limits_{t>0}\Big(\int\limits_{\{|\mathcal{T}_{\N}f(x)|>t\}} w(x)\, d\mu_k(x)  \Big)^{1/p}\leq C\, \int_{\mathbb{R}^d}|f(x)| w(x)\, d\mu_k(x)  ,$$
    where $C=C(d, k, G, s, w, \|\{v_j\}\|_{\ell^\infty(\mathbb{Z})})$. 

    \item \label{TN l infty} For any $f\in L^\infty(\mathbb{R}^d)$ we have the boundedness
    $$\|\mathcal{T}_{\N}f\|_{\text{BMO}_k} \leq C\, \|f\|_{L^\infty(d\mu_k)},$$
     where $C=C(d, k, G, s, \|\{v_j\}\|_{\ell^\infty(\mathbb{Z})})$. 

     \item \label{TN BMO} For any $f\in \text{BMO}_{G,k}(\mathbb{R}^d)$ we have the boundedness
    $$\|\mathcal{T}_{\N}f\|_{\text{BMO}_k} \leq C\, \|f\|_{\text{BMO}_{G,k}},$$
     where $C=C(d, k, G, s, \|\{v_j\}\|_{\ell^\infty(\mathbb{Z})})$. 
    \end{enumerate}
    It is to be noted that in each of the above cases, $C$ is independent of $\N$.
\end{thm}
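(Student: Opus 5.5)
Parts \ref{TN p} and \ref{TN 1} follow from Theorem \ref{weighted calderon Zygmund bddness thm} applied to $\mathcal{T}_{\N}$ with $q=2$. Proposition \ref{L2 bddness of TN} gives $L^2(d\mu_k)$-boundedness, hence $L^2\to L^{2,\infty}$ boundedness, with a constant independent of $\N$. Proposition \ref{kernel of TN} shows that $K_{\N}$ satisfies the three Dunkl--Calder\'on--Zygmund conditions of Definition \ref{defn of bilinear Dunkl cal zyg ope} with $\varepsilon=1$, again uniformly in $\N$; the representation of $\mathcal{T}_{\N}f$ through $K_{\N}$ holds in particular for $f\in C_c^\infty$ whose support avoids the orbit of $x$. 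Since the constants in Theorem \ref{weighted calderon Zygmund bddness thm} depend only on the initial weak-type constant and the kernel constants, the weighted strong and weak type bounds hold with $C$ independent of $\N$. One extends from $C_c^\infty$ to all of $L^p(w\,d\mu_k)$ by density.

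For \ref{TN BMO}, we follow the standard Calder\'on--Zygmund argument adapted to orbits. Fix a ball $B=B(x_0,r)$, set $\widetilde B=B(x_0,2r)$, and split
\[
f=(f-f_{\mathcal{O}(\widetilde B)})\chi_{\mathcal{O}(\widetilde B)}+(f-f_{\mathcal{O}(\widetilde B)})\chi_{\mathcal{O}(\widetilde B)^c}+f_{\mathcal{O}(\widetilde B)}=:f_1+f_2+c.
\]
The constant $c$ is harmless: by \eqref{int of heat ker is 1} each $e^{-t(-\Delta_k)^s}$ preserves constants (through subordination), so $\mathcal{T}_{\N}c=0$.

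For the local part $f_1$, we use Cauchy--Schwarz together with Proposition \ref{L2 bddness of TN}:
\[
\frac{1}{\mu_k(B)}\int_B|\mathcal{T}_{\N}f_1|\,d\mu_k\lesssim\Big(\frac{1}{\mu_k(B)}\int_{\mathcal{O}(\widetilde B)}|f-f_{\mathcal{O}(\widetilde B)}|^2\,d\mu_k\Big)^{1/2}.
\]
By \eqref{orbit volume compa}, the $G$-invariance of $\mu_k$ and doubling, this is controlled by $\|f\|_{\mathrm{BMO}_{G,k}}$. Here we use the John--Nirenberg inequality (the $L^2$ version of BMO on a space of homogeneous type, applied to $\mathcal{O}(\widetilde B)$ and the translates $\sigma(\widetilde B)$).

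For the far part, compare $\mathcal{T}_{\N}f_2(x)$ with $\mathcal{T}_{\N}f_2(x_0)$ for $x\in B$. For $y\notin\mathcal{O}(\widetilde B)$ we have $d_G(x_0,y)\ge 2r>2|x-x_0|$, so the smoothness estimate \eqref{smtness estimate last x changing} gives
\[
|K_{\N}(x,y)-K_{\N}(x_0,y)|\lesssim \frac{1}{\mu_k(B(x_0,d_G(x_0,y)))}\,\frac{r}{|x_0-y|}.
\]
The main obstacle is that the decay factor involves $|x_0-y|$ rather than $d_G(x_0,y)$, while the volume is measured in $d_G$. We handle it by bounding $r/|x_0-y|\le r/d_G(x_0,y)$. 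The resulting kernel $\frac{r}{d_G\,\mu_k(B(x_0,d_G))}$ is integrated over dyadic orbit annuli $\mathcal{O}(B(x_0,2^{m+1}r))\setminus\mathcal{O}(B(x_0,2^m r))$. On each annulus, the average of $|f-f_{\mathcal{O}(\widetilde B)}|$ is $\lesssim m\|f\|_{\mathrm{BMO}_{G,k}}$, by the usual telescoping of orbit averages, which is valid because the orbits are comparable to balls via \eqref{orbit volume compa}. Summing $\sum_m 2^{-m}m$ converges. For this step one should take care that $\mathcal{T}_{\N}f_2(x_0)$ is finite, or alternatively work with the differences directly.

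Part \ref{TN l infty} is either the same argument with the $L^\infty$ norm in place of the BMO norm, or an immediate consequence of \ref{TN BMO} together with the embedding $\|\cdot\|_{\mathrm{BMO}_{G,k}}\lesssim\|\cdot\|_{L^\infty}$. Finally, the finiteness of the individual terms is justified by the absolute convergence of the kernel integrals against the BMO growth $\int |f-c|\,(1+d_G)^{-1}\,d\mu_k/\mu_k(B(x_0,d_G))<\infty$.
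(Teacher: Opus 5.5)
Your proposal is correct and follows essentially the paper's proof: (i)--(ii) come from Proposition \ref{L2 bddness of TN}, Proposition \ref{kernel of TN} and Theorem \ref{weighted calderon Zygmund bddness thm}, and (iii) from (iv); (iv) uses the same splitting $f_1+f_2+c$, with Cauchy--Schwarz, $L^2$-boundedness and John--Nirenberg for $f_1$, and the $x$-smoothness \eqref{smtness estimate last x changing} summed over dyadic orbit annuli for $f_2$ (comparing with the centre $x_0$ instead of the paper's auxiliary point $x_1$ is immaterial). One correction: finiteness of $\mathcal{T}_{\N}f_2(x_0)$ does not follow from the uniform size bound of Proposition \ref{kernel of TN}, since that bound gives only $1/\mu_k(B(x_0,d_G))$, which is not integrable against BMO growth; it follows instead from Proposition \ref{esti for fractional Dunkl heat kernel} \ref{frac heat i} for fixed $\N$, which supplies an $\N$-dependent extra decay $d_G^{-2s}$ (not $(1+d_G)^{-1}$), exactly as in the paper's estimate for $f_2$.
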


\begin{proof}[Proof of \ref{TN p} and \ref{TN 1}] Follows directly from Proposition \ref{L2 bddness of TN}, Proposition \ref{kernel of TN}, and Theorem \ref{weighted calderon Zygmund bddness thm}. 
\end{proof}
\begin{proof}[Proof of \ref{TN l infty}] The proof will follow from \ref{TN BMO}, since we have
$$\|f\|_{\text{BMO}_{G,k}} \lesssim \|f\|_{L^\infty(d\mu_k)}.$$
\end{proof}
\begin{proof}[Proof of \ref{TN BMO}] 
We will closely follow the ideas developed in \cite{ChaoBODTFHSGBSO}. We first show that $\mathcal{T}_{\bf N}f(x)$ is finite almost everywhere for any 
$f \in \mathrm{BMO}_{G,k}(\mathbb{R}^d)$. Let $B=B(x_0, r)$ be a ball in $\mathbb{R}^d$ and $B^*=B(x_0, 2r)$. Then 
$$\mu_k(B^*) \sim \mu_k(B) \sim \mu_k(\mathcal{O}(B^*))\sim \mu_k(\mathcal{O}(B)).$$
Now
\begin{eqnarray*}
    f& = & \big(f-f_{\mathcal{O}(B)}\big)\chi_{\mathcal{O}(B^*)} + \big(f-f_{\mathcal{O}(B)}\big)\chi_{\mathcal{O}(B^*)^c} + f_{\mathcal{O}(B)}\\
    & = : & f_1+ f_2 + f_3.
\end{eqnarray*}
\vspace{.5cm}
{\bf Estimate for $f_1$:}
\vspace{.5cm}
\par We have 
\begin{eqnarray*}
    &&\int_{\mathbb{R}^d}|f_1(x)|\, d\mu_k(x)\\ &=& \int_{\mathcal{O}(B^*)}|\,f(x)-f_{\mathcal{O}(B)}|\, d\mu_k(x)\\
    &=& \int_{\mathcal{O}(B^*)}|\,f(x)-f_{\mathcal{O}(B^*)}|\, d\mu_k(x) + \mu_k(\mathcal{O}(B^*)) |\,f_{\mathcal{O}(B)}-f_{\mathcal{O}(B^*)}|\\
    &\lesssim & \mu_k(B^*) \big[\|f\|_{\text{BMO}_{G,k}} + \frac{1}{\mu_k(\mathcal{O}(B))} \int_{\mathcal{O}(B)}|\,f(y)-f_{\mathcal{O}(B^*)}|\, d\mu_k(y)\big]\\
    &\lesssim & \mu_k(B^*) \big[\|f\|_{\text{BMO}_{G,k}} + \frac{1}{\mu_k(\mathcal{O}(B^*))} \int_{\mathcal{O}(B^*)}|\,f(y)-f_{\mathcal{O}(B^*)}|\, d\mu_k(y)\big]\\
    &\lesssim & \mu_k(B^*)\|f\|_{\text{BMO}_{G,k}}.
\end{eqnarray*}
Then from Theorem \ref{bddness thm of partial sum} \ref{TN 1}, it follows that $\mathcal{T}_{\bf N}f_1$ is finite almost everywhere.

\vspace{.5cm}
{\bf Estimate for $f_2$:}
\vspace{.5cm}
\par From Proposition \ref{esti for fractional Dunkl heat kernel} \ref{frac heat i}, we can write 
\begin{eqnarray*}
 | \,e^{-t\,(-\Delta_k)^{s}}f_2(x)\,|&=& \big|\int_{\mathbb{R}^d}\tau^k_xh_{t,s}(-y)f_2(y) \, d\mu_k(y) \,\big| \\
 &=& \left|\int_{\mathcal{O}(B^*)^c}\tau^k_xh_{t,s}(-y)(f(y)-f_{\mathcal{O}(B)})\, d\mu_k(y) \right|\\
 &\leq & t\,\int_{d_G(x_0,y)>2r}\frac{|f(y)-f_{\mathcal{O}(B)}|}{\big(d_G(x,y)\big)^{d_k+2s}}\, d\mu_k(y)
\end{eqnarray*}
If we take $x\in B$, then we can write
\begin{eqnarray*}
 &&| \,e^{-t\,(-\Delta_k)^{s}}f_2(x)\,|\\
 &\leq & t\,\sum\limits_{j=1}^\infty\int\limits_{2^j r<d_G(x_0,y)\leq 2^{j+1}r}\frac{|f(y)-f_{\mathcal{O}(B)}|}{\big(d_G(x,y)\big)^{d_k+2s}}\, d\mu_k(y) \\  
 &\leq & t\,\sum\limits_{j=1}^\infty (2^{j-1}r)^{-d_k-2s}\int\limits_{d_G(x_0,y)\leq 2^{j+1}r}|f(y)-f_{\mathcal{O}(2^{j+1}B)}+f_{\mathcal{O}(2^{j+1}B)}-f_{\mathcal{O}(B)}|\, d\mu_k(y) \\ 
 &\lesssim & t\,\sum\limits_{j=1}^\infty (2^{j}r)^{-d_k-2s} \Big[\int_{\mathcal{O}(2^{j+1}B)}|f(y)-f_{\mathcal{O}(2^{j+1}B)}|\, d\mu_k(y)\\
 && +\mu_k(\mathcal{O}(2^{j+1}B))|f_{\mathcal{O}(B)}-f_{\mathcal{O}(2^{j+1}B)}|\Big] \\
 &\lesssim & t\,\sum\limits_{j=1}^\infty (2^{j}r)^{-d_k-2s} \mu_k(\mathcal{O}(2^{j+1}B))\Big[ \|f\|_{\text{BMO}_{G,k}} + |f_{\mathcal{O}(B)}-f_{\mathcal{O}(2^{j+1}B)}|\Big]\\
 &\lesssim & t\,\sum\limits_{j=1}^\infty (2^{j}r)^{-d_k-2s} \mu_k(2^{j}B)\Big[ \|f\|_{\text{BMO}_{G,k}} + \sum\limits_{n=1}^j|f_{\mathcal{O}(2^{n+1}B)}-f_{\mathcal{O}(2^{\,n}B)}|\Big]\\
 &\lesssim & t\,\sum\limits_{j=1}^\infty (2^{j}r)^{-d_k-2s} 2^{jd_k}\mu_k(B)\Big[ \|f\|_{\text{BMO}_{G,k}} + \sum\limits_{n=1}^j2\,\|f\|_{\text{BMO}_{G,k}}\Big]\\
 &\lesssim & t\,r^{-d_k-2s}\mu_k(B)\, \|f\|_{\text{BMO}_{G,k}}\sum\limits_{j=1}^\infty 2^{-2js}(1+2j)\\
  &\lesssim & t\,r^{-d_k-2s}\mu_k(B)\, \|f\|_{\text{BMO}_{G,k}}.
\end{eqnarray*}
Since $x_0$ and $r$ are arbitrary, and $\mathcal{T}_{\N}f_2(x)$ is a finite sum of terms of the form $e^{-t(-\Delta_k)^s}f_2(x)$, it follows from the above that $\mathcal{T}_{\N}f_2$ is finite almost everywhere.

\vspace{.5cm}
{\bf Estimate for $f_3$:}
\vspace{.5cm}

We note that for any $t>0$,
$$\int_{\mathbb{R}^d}\tau^k_xh_{t,s}(-y)\,d\mu_k(y)=\int_{\mathbb{R}^d}h_{t,s}(y)\,d\mu_k(y)=1,$$
and hence $\mathcal{T}_{\N}f_3(x)=0.$
\par Combining the estimates for $f_1$, $f_2$, and $f_3$, we can finally conclude that $\mathcal{T}_{\N}f$ is finite almost everywhere.

\par Now to prove the boundedness from $\text{BMO}_{G,k}(\mathbb{R}^d)$ to $\text{BMO}_{k}(\mathbb{R}^d)$, let us consider an arbitrary ball $B=B(x_0,r)\subseteq \mathbb{R}^d$, and choose $x_1\in B$ such that $|\mathcal{T}_{\N}f_2(x_1)|<\infty$, where the decomposition of $f$ into $f_1, f_2$, and $f_3$ is the same as earlier in the proof, but this time with respect to the new $B$. Also, let $B^*=B(x_0, 2r)$ and we chose $c_B=\mathcal{T}_{\N}f_2(x_1)$. Then
\begin{eqnarray*}
&& \frac{1}{\mu_k(B)}\int_B\big| \mathcal{T}_{\N}f(x)- c_B\big|\, d\mu_k(x)\\ 
&\leq & \frac{1}{\mu_k(B)}\int_B\big| \mathcal{T}_{\N}f_1(x)\big|\, d\mu_k(x)+ \frac{1}{\mu_k(B)}\int_B\big| \mathcal{T}_{\N}f_2(x)- \mathcal{T}_{\N}f_2(x_1)\big|\, d\mu_k(x) \\
&=:& I+II.
\end{eqnarray*}
The estimate of the first term follows from Proposition \ref{L2 bddness of TN} and \cite[p. 46]{JiuTDOTHSAWTDO} as follows.
\begin{eqnarray*}
  I &\leq & \frac{\big(\mu_k(B)\big)^{1/2}}{\mu_k(B)} \|\mathcal{T}_{\N}f_1\|_{L^2(d\mu_k)} \\
  &\leq & \big(\mu_k(B)\big)^{-1/2}\|f_1\|_{L^2(d\mu_k)} \\
  &=& \big(\mu_k(B)\big)^{-1/2}\Big( \int_{\mathcal{O}(B^*)}|f(x)- f_{\mathcal{O}(B)}|^2 \,d\mu_k(x)\Big)^{1/2} \\
  &\lesssim& \big(\mu_k(B)\big)^{-1/2}\Big( \int_{\mathcal{O}(B^*)}|f(x)- f_{\mathcal{O}(B^*)}|^2 \,d\mu_k(x) +\!\!\! \int_{\mathcal{O}(B^*)}|f_{\mathcal{O}(B)}- f_{\mathcal{O}(B^*)}|^2 \,d\mu_k(x)\Big)^{1/2} \\
  &\lesssim& \big(\mu_k(B)\big)^{-1/2}\Big( \|f\|_{\text{BMO}_{G,k}}^2 \,\mu_k(\mathcal{O}(B^*)) +|f_{\mathcal{O}(B)}- f_{\mathcal{O}(B^*)}|^2 \,\mu_k(\mathcal{O}(B^*))\Big)^{1/2} \\
  &\lesssim & \big(\mu_k(B)\big)^{-1/2}\Big( \|f\|_{\text{BMO}_{G,k}}^2 \,\mu_k(\mathcal{O}(B^*))+ \frac{1}{\mu_k(\mathcal{O}(B^*))}\int_{\mathcal{O}(B^*)}|f(y)- f_{\mathcal{O}(B^*)}|^2\, d\mu_k(y)\Big)^{1/2}\\
&\lesssim & \|f\|_{\text{BMO}_{G,k}}.
\end{eqnarray*}
On the other hand, for the second term, we use \eqref{smtness estimate last x changing} to obtain
\begin{eqnarray*}
  II &\leq &  \frac{1}{\mu_k(B)} \int_{B}\int_{\mathbb{R}^d}|K_{\N}(x,y)- K_{\N}(x_1, y)|\,|f_2(y)|\, d\mu_k(y)d\mu_k(x)\\
  &\leq &  \frac{\|\{v_j\}\|_{\ell^\infty(\mathbb{Z})}}{\mu_k(B)} \int_{B}\int_{\mathcal{O}(B^*)^c}\frac{1}{\mu_k\big(B(x, d_G(x,y))\big)}\,\frac{|x-x_1|}{|x-y|}\,|f(y)-f_{\mathcal{O}(B)}|\, d\mu_k(y)d\mu_k(x)\\
  &\lesssim &  \frac{r}{\mu_k(B)} \int_{B}\Big[\sum\limits_{j=1}^\infty\int\limits_{2^{j}r< d_G(x_0,y)\leq 2^{j+1}r}\frac{|f(y)-f_{\mathcal{O}(B)}|}{|x-y|\,\mu_k\big(B(x, d_G(x,y))\big)}\, d\mu_k(y)\Big]\, d\mu_k(x)\\
  &\lesssim &  \frac{r}{\mu_k(B)} \int_{B}\Big[\sum\limits_{j=1}^\infty  \big( 2^{j-1}r\big)^{-1}\frac{1}{\mu_k\big(B(x, 2^{j-1}r \big)}
  \int\limits_{ d_G(x_0,y)\leq 2^{j+1}r}\!\!\!|f(y)-f_{\mathcal{O}(B)}|\, d\mu_k(y)\Big]\, d\mu_k(x)\\
  &\lesssim &  \frac{r}{\mu_k(B)} \int_{B}\Big[\sum\limits_{j=1}^\infty  \big( 2^{j-1}r\big)^{-1}\frac{1}{\mu_k\big(B(x, 2^{j-1}r \big)}
  \int\limits_{ d_G(x,y)\leq 2^{j+2}r}|f(y)-f_{\mathcal{O}(B)}|\, d\mu_k(y)\Big]\, d\mu_k(x)\\
  &\lesssim &  \frac{1}{\mu_k(B)} \int_{B}\Big[\sum\limits_{j=1}^\infty 2^{-j}\frac{1}{\mu_k\big(B(x, 2^{j+2}r \big)}
  \int\limits_{\mathcal{O}( B(x, 2^{j+2}r )}\!\!\!|f(y)-f_{\mathcal{O}(B)}|\, d\mu_k(y)\Big]\, d\mu_k(x)\\
   &\lesssim &  \frac{1}{\mu_k(B)} \int_{B}\Big[\sum\limits_{j=1}^\infty 2^{-j}\frac{1}{\mu_k\big(\mathcal{O}( B(x, 2^{j+2}r ) \big)}
  \int\limits_{\mathcal{O}( B(x, 2^{j+2}r )}|f(y)-f_{\mathcal{O}(B)}|\, d\mu_k(y)\Big]\, d\mu_k(x)\\
&\lesssim & \frac{1}{\mu_k(B)}    \|f\|_{\text{BMO}_{G,k}}\int_{B}\Big[\sum\limits_{j=1}^\infty 2^{-j} (1+2j)\Big]\, d\mu_k(x)\\
&\lesssim &    \|f\|_{\text{BMO}_{G,k}}.
\end{eqnarray*}
This completes the proof of \ref{TN BMO}.
\end{proof}


\section{Proofs of the main results}\label{Sec-proof of main results} 
In the final section of the paper, we first study some properties of lacunary sequences and prove a Cotlar-type inequality, and then proceed with the proofs of all the main results stated in Section \ref{Sec-intro}.

\subsection{Properties of lacunary sequences}\label{subsec- prop lacu sequ}
We prove the following property of the $\ell$-lacunary sequence, which allows us to assume, without loss of generality, that
$$1<\ell\leq \frac{a_{j+1}}{a_j}\leq \ell^{\,2}, \qquad j\in \mathbb{Z}.$$
\begin{lem}
 Let $\ell>1$, $\{a_j\}_{j\in \mathbb{Z}}$ be an $\ell$-lacunary sequence, and $\{v_j\}_{j\in \mathbb{Z}}\in \ell^\infty (\mathbb{Z})$.  Then there exist an $\ell$-lacunary sequence $\{b_j\}_{j\in \mathbb{Z}}$ be  and $\{\omega_j\}_{j\in \mathbb{Z}}\in \ell^\infty (\mathbb{Z})$ such that
\begin{enumerate}[label=(\roman*)]
\item \leavevmode\vspace*{-\dimexpr\abovedisplayskip + \baselineskip} $$1<\ell\leq \frac{b_{j+1}}{b_j}\leq \ell^{\,2}, \quad j\in \mathbb{Z} \text{ and } \|\{\omega_j\}\|_{\ell^\infty(\mathbb{Z})}=\|\{v_j\}\|_{\ell^\infty(\mathbb{Z})}$$
\item For any $\N=(N_1, N_2)\in \mathbb{Z}^2$, there exits $\N'=(N'_1, N'_2)\in \mathbb{Z}^2$ such that $\mathcal{T}_{\N}=\widetilde{\mathcal{T}}_{\N'}$, where
$$\widetilde{\mathcal{T}}_{\N'}f(x)=\sum\limits_{j=N'_1}^{N'_2} \omega_j \Big(e^{-b_{j+1}(-\Delta_k)^s}f(x) - e^{-b_j(-\Delta_k)^s} f(x)\Big).$$
\end{enumerate}
\end{lem}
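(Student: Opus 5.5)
The idea is to refine the sequence $\{a_j\}$ by inserting intermediate points wherever a ratio $a_{j+1}/a_j$ exceeds $\ell^2$. On each inserted sub-interval we assign the same coefficient $v_j$. The telescoping structure of the differences then guarantees that every partial sum $\mathcal{T}_{\N}$ is reproduced exactly by a partial sum of the refined family. Concretely, fix $j\in\mathbb{Z}$ and let $m_j\geq 0$ be the largest integer with $\ell^{m_j+1}\le a_{j+1}/a_j$. The number $m_j$ is finite since $a_{j+1}/a_j<\infty$, and the hypothesis $a_{j+1}/a_j\ge \ell$ is exactly what makes $m_j\ge 0$ well defined. If $m_j=0$, then $\ell\le a_{j+1}/a_j<\ell^{2}$ and nothing is inserted. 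If $m_j\ge1$, we insert the points $a_j\ell^{i}$ for $i=1,\dots,m_j$. The consecutive ratios become $\ell,\dots,\ell$, and the last one is
$$
\frac{a_{j+1}}{a_j\ell^{m_j}}\in[\ell,\ell^{2}),
$$
by the maximality of $m_j$. So every consecutive ratio of the refined sequence lies in $[\ell,\ell^2]$.

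Next we relabel by integers. Define an increasing map $\phi:\mathbb{Z}\to\mathbb{Z}$ with $\phi(0)=0$ and $\phi(j+1)=\phi(j)+m_j+1$; for negative indices the same relation is used backwards. Set $b_{\phi(j)+i}=a_j\ell^{i}$ for $0\le i\le m_j$. This gives $b_{\phi(j)}=a_j$ and $b_{\phi(j+1)}=a_{j+1}$, and the indices $\phi(j)+i$ exhaust $\mathbb{Z}$ exactly once. Define $\omega_{\phi(j)+i}=v_j$ for $0\le i\le m_j$. Every $\omega_n$ is some $v_j$, and every $v_j$ appears as some $\omega_n$, so $\|\{\omega_j\}\|_{\ell^\infty(\mathbb{Z})}=\|\{v_j\}\|_{\ell^\infty(\mathbb{Z})}$. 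Since each ratio $b_{n+1}/b_n\ge\ell$, the sequence $\{b_j\}$ is again $\ell$-lacunary, which proves (i).

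For (ii), given $\N=(N_1,N_2)$, take $N_1'=\phi(N_1)$ and $N_2'=\phi(N_2+1)-1$. The sum over $n$ from $\phi(j)$ to $\phi(j+1)-1$ of $\omega_n\bigl(e^{-b_{n+1}(-\Delta_k)^s}f-e^{-b_n(-\Delta_k)^s}f\bigr)$ telescopes, because $\omega_n\equiv v_j$ is constant there. It equals $v_j\bigl(e^{-a_{j+1}(-\Delta_k)^s}f-e^{-a_j(-\Delta_k)^s}f\bigr)$. Summing over $j=N_1,\dots,N_2$ gives $\mathcal{T}_{\N}=\widetilde{\mathcal{T}}_{\N'}$.

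The only delicate point is the bookkeeping of the re-indexing over all of $\mathbb{Z}$, including negative indices. It must be checked that $\phi$ is a well-defined bijection onto the block starting points and that the blocks tile $\mathbb{Z}$. This holds because $\phi$ is strictly increasing with steps $m_j+1\ge1$, and $\phi(j)\to\pm\infty$ as $j\to\pm\infty$.
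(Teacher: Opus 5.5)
Your proof is correct and follows the same approach as the paper: insert the geometric points $a_j\ell^{i}$ into each gap whose ratio exceeds $\ell^{2}$, repeat the coefficient $v_j$ on the resulting sub-blocks, and telescope. Your explicit re-indexing map $\phi$ and the choice $N_1'=\phi(N_1)$, $N_2'=\phi(N_2+1)-1$ handle the index bookkeeping more cleanly than the paper's definition of $J(j)$ and $\N'$.
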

\begin{proof}
 The proof follows along the lines of \cite[Proposition 3.2]{BernardisDTIWS}. 
Let $b_0=a_0$. Now, we already have $a_1/a_0 \geq \ell$. If $\ell^{\, 2} \geq a_1/a_0$, define $b_1=a_1$, otherwise define $b_1=\ell a_0$. In the second case, clearly $b_1/b_0=\ell \leq \ell^{\, 2}$ and 
$$\frac{a_1}{b_1} \geq \frac{\ell^{\, 2} a_0}{\ell a_0}=\ell.$$
Now if $\ell^{\, 2} \geq a_1/b_1$, define $b_2=a_1$, otherwise define $b_2=\ell^{\, 2} a_0$.
 Since $\ell >1$, this process ends at some $j_0$ such that $b_{j_0}=a_1$. For the rest of the $j \in \mathbb{Z}$, this can be done in similar way.
 \par Now let $J(j)=\{n: a_{j-1}<b_j \leq a_j\}$, and define $\omega_n=v_n$ if $n\in J(j)$. Then, we are able to write
 \begin{eqnarray*}
&&v_j \Big(e^{-a_{j+1}(-\Delta_k)^s}f(x) - e^{-a_j(-\Delta_k)^s} f(x)\Big)\\
&=& \sum \limits_{n \in J(j)}\omega_n \Big(e^{-a_{n+1}(-\Delta_k)^s}f(x) - e^{-a_n(-\Delta_k)^s} f(x)\Big).
 \end{eqnarray*}

Let $\N'=(N'_1, N'_2)\in \mathbb{Z}^2$ be such that $b_{N'_2}=a_{N_2}$ and $b_{N'_1-1}=a_{N_1-1}$, so that
$$\mathcal{T}_{\N}f(x)=\sum\limits_{n=N'_1}^{N'_2} \omega_n \Big(e^{-b_{n+1}(-\Delta_k)^s}f(x) - e^{-b_n(-\Delta_k)^s} f(x)\Big)=\widetilde{\mathcal{T}}_{\N'}f(x).$$
\end{proof}

\subsection{Cotlar-type inequality and boundedness of the maximal operator}\label{subsec- cotlar ineq and bddness of max op}
The main result of this section is a Cotlar-type inequality for the maximal operator $\mathcal{T}^*$, which serves as the main ingredient in the proof of Theorem \ref{main thm for maximal op}. Before stating this inequality, we define two maximal functions that will be used in the inequality.
$$\mathcal{M}f(x):=\sup_{r>0} \frac{1}{\mu_k(B(x,r))}\int_{B(x,r)} |f(y)|\, d\mu_k(y)
$$ and for $1<q<\infty$, 
$$\mathcal{M}_qf(x):=\sup_{r>0} \Big(\frac{1}{\mu_k(B(x,r))}\int_{B(x,r)} |f(y)|^q\, d\mu_k(y)\Big)^{1/q},
$$ 
for a suitable function $f$ on $\mathbb{R}^d$. Clearly, by H\"older's inequality 
$\mathcal{M}f(x)\leq\mathcal{M}_qf(x)$.

\par The Cotlar-type inequality associated with the operator $\mathcal{T}^*$ is stated and proved in the following theorem.
\begin{thm}\label{cotlar ineq thm}
For any $1<q<\infty$, there exits  $C=C(d, k, G, s, \|\{v_j\}\|_{\ell^\infty(\mathbb{Z})}, \ell)>0$ such that for any $x\in \mathbb{R}^d$ and $M\in \mathbb{N}$, we have
$$\mathcal{T}_M^*f(x) \leq C\,\Big( \mathcal{M}(\mathcal{T}_{(-M,\,M)}f)(x) + \sum \limits_{\sigma \in G}\mathcal{M}_qf(\sigma(x))\Big),$$
where
$$\mathcal{T}_M^*f(x)=\sup\limits_{-M\leq N_1< N_2\leq M} \big|\mathcal{T}_{\N} f(x)\big|.$$
\end{thm}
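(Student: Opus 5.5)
We follow the classical Cotlar argument as adapted to differential transforms in \cite{ChaoBODTFHSGBSO, BernardisDTIWS}. By the preceding lemma we may assume $\ell\leq a_{j+1}/a_j\leq \ell^{2}$. Fix $x$, $M$ and $-M\leq N_1<N_2\leq M$. Since $\mathcal{T}_{(N_1,N_2)}=\mathcal{T}_{(N_1,M)}-\mathcal{T}_{(N_2+1,M)}$, it suffices to bound $|\mathcal{T}_{(m,M)}f(x)|$ uniformly in $-M\leq m\leq M$ by the right-hand side. Set $r=a_m^{1/(2s)}$, the natural spatial scale at time $a_m$. Let $B=B(x,r)$, and split $f=f_1+f_2$ with $f_1=f\chi_{\mathcal{O}(B(x,2r))}$ and $f_2=f-f_1$. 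For $z\in B$ we write
$$\mathcal{T}_{(m,M)}f(x)=\bigl[\mathcal{T}_{(m,M)}f_2(x)-\mathcal{T}_{(m,M)}f_2(z)\bigr]+\mathcal{T}_{(-M,M)}f(z)-\mathcal{T}_{(-M,M)}f_1(z)-\mathcal{T}_{(-M,m-1)}f_2(z).$$

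\textbf{Step 1: the smoothness term.} For $y\notin\mathcal{O}(B(x,2r))$ we have $|x-z|<r<d_G(x,y)/2$. Hence \eqref{smtness estimate last x changing}, applied to $K_{(m,M)}$ (uniform in $\N$), gives
$$\int |K_{(m,M)}(x,y)-K_{(m,M)}(z,y)|\,|f(y)|\,d\mu_k(y)\lesssim \sum_{j\geq1}2^{-j}\frac{1}{\mu_k(B(x,2^jr))}\int_{\mathcal{O}(B(x,2^{j+1}r))}|f|\,d\mu_k.$$
Since $\mathcal{O}(B(x,\rho))=\bigcup_\sigma \sigma(B(x,\rho))=\bigcup_\sigma B(\sigma x,\rho)$ and $\mu_k(B(\sigma x,\rho))=\mu_k(B(x,\rho))$, this is $\lesssim\sum_\sigma \mathcal{M}f(\sigma x)\leq \sum_\sigma \mathcal{M}_qf(\sigma x)$.

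\textbf{Step 2: the small-time tail $\mathcal{T}_{(-M,m-1)}f_2(z)$.} This telescopes over times $t\leq a_m$, so the kernel is $\int_{a_{-M}}^{a_m}$ of $\partial_t$ of the fractional heat kernel, weighted by the bounded $v_j$. By Proposition \ref{esti for fractional Dunkl heat kernel}\ref{frac heat ii}, with $d_G(z,y)\gtrsim d_G(x,y)\geq 2r$, its absolute value is bounded by
$$\int_0^{a_m}\frac{dt}{\mu_k(B(z,d_G(z,y)))\,d_G(z,y)^{2s}}\lesssim \frac{r^{2s}}{\mu_k(B(x,d_G(x,y)))\,d_G(x,y)^{2s}}.$$
A dyadic sum over annuli again gives $\lesssim\sum_\sigma\mathcal{M}f(\sigma x)$. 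This is exactly where the choice $r^{2s}=a_m$ matters.

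\textbf{Step 3: the local term and averaging.} The terms $\mathcal{T}_{(-M,M)}f_1(z)$ and $\mathcal{T}_{(-M,M)}f(z)$ are controlled only on average. Write $A=|\mathcal{T}_{(m,M)}f(x)|$ and $E=C\sum_\sigma\mathcal{M}_qf(\sigma x)$. The identity above gives $A\leq E+|\mathcal{T}_{(-M,M)}f(z)|+|\mathcal{T}_{(-M,M)}f_1(z)|$ for every $z\in B$. Raise this to the power $\delta\in(0,1)$ and average over $B$:
$$A^\delta\lesssim E^\delta+\mathcal{M}(\mathcal{T}_{(-M,M)}f)(x)^\delta+\frac{1}{\mu_k(B)}\int_B|\mathcal{T}_{(-M,M)}f_1|^\delta\,d\mu_k.$$
Kolmogorov's inequality, combined with the uniform weak $(1,1)$ bound of Theorem \ref{bddness thm of partial sum}\ref{TN 1} (with $w\equiv1$), bounds the last average by $\bigl(\mu_k(B)^{-1}\|f_1\|_{L^1}\bigr)^\delta\lesssim\bigl(\sum_\sigma\mathcal{M}f(\sigma x)\bigr)^\delta$. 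Alternatively, $L^q$-boundedness and Hölder with $\delta=1$ give the $\mathcal{M}_q$ bound directly, which is why $\mathcal{M}_q$ appears in the statement; with that route no $\delta$ is needed. Taking the supremum over $m$ then yields the inequality.

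\textbf{Main obstacle.} The delicate point is Step 2: we must make sure that the time-integrated bound from Proposition \ref{esti for fractional Dunkl heat kernel}\ref{frac heat ii} is integrable away from the orbit. It is, since the decay is $d_G^{-2s}$ times the volume factor. We also need the orbit geometry to convert $d_G$-annuli averages into $\mathcal{M}$ at the points $\sigma(x)$, using the comparability \eqref{V(x,y,d(x,y) comparison}. A further point to check is that $d_G(z,y)\sim d_G(x,y)$ when $z\in B(x,r)$ and $d_G(x,y)\geq2r$.
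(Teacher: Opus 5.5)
\textbf{There is a genuine gap in your starting identity.} Note that
$\mathcal{T}_{(-M,M)}f(z)-\mathcal{T}_{(-M,M)}f_1(z)=\mathcal{T}_{(-M,M)}f_2(z)=\mathcal{T}_{(-M,m-1)}f_2(z)+\mathcal{T}_{(m,M)}f_2(z)$.
Substituting this, the right-hand side of your identity collapses to $\mathcal{T}_{(m,M)}f_2(x)$, not $\mathcal{T}_{(m,M)}f(x)$.

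The term $\mathcal{T}_{(m,M)}f_1(x)$ is therefore missing, and none of your Steps 1--3 controls it. So the inequality $A\leq E+|\mathcal{T}_{(-M,M)}f(z)|+|\mathcal{T}_{(-M,M)}f_1(z)|$ at the start of Step 3 is unjustified.

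This is not a cosmetic slip. In the classical Cotlar argument the truncation is in space, so the truncated operator annihilates the local piece automatically. Here the truncation is in time: the kernel of $\mathcal{T}_{(m,M)}$ does not vanish near the orbit of the diagonal. The local piece is evaluated at the fixed point $x$, so averaging over $z$ cannot handle it, and it needs its own pointwise estimate.

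\textbf{The fix.} What is missing is the size bound for the large-time part of the kernel. For every $y$, use Proposition \ref{esti for fractional Dunkl heat kernel} \ref{frac heat ii} (discarding $d_G(x,y)$) and the lower bound in \eqref{VOLRADREL}:
\begin{equation*}
\Big|\sum_{j=m}^{M}v_j\big(\tau^k_xh_{a_{j+1},s}(-y)-\tau^k_xh_{a_j,s}(-y)\big)\Big|
\lesssim\int_{a_m}^{\infty}\Big|\frac{\partial}{\partial t}\tau^k_xh_{t,s}(-y)\Big|\,dt
\lesssim\int_{a_m}^{\infty}\frac{dt}{t\,\mu_k(B(x,t^{\frac{1}{2s}}))}
\lesssim\frac{1}{\mu_k(B(x,r))}.
\end{equation*}
Hence $|\mathcal{T}_{(m,M)}f_1(x)|\lesssim\mu_k(B(x,r))^{-1}\int_{\mathcal{O}(B(x,2r))}|f|\,d\mu_k\lesssim\sum_{\sigma\in G}\mathcal{M}f(\sigma(x))$. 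This is exactly the term $I$ in the paper's proof. There it is obtained from Proposition \ref{esti for fractional Dunkl heat kernel} \ref{frac heat i} term by term and summed using lacunarity.

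Once this term is added, the rest of your argument is sound and matches the paper's decomposition into $A_1$--$A_4$. Your Step 2, which integrates $|\partial_t\tau^k_zh_{t,s}|$ over $(0,a_m)$, is a slightly cleaner version of the paper's mean-value-theorem bound for $A_4$. Your Kolmogorov variant of Step 3 is a valid alternative to the paper's H\"older/$L^q$ bound for $A_2$.
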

\begin{proof}
For any $x\in \mathbb{R}^d$ and $\N=(N_1, N_2)$, we can write 
$$\mathcal{T}_{\N} f(x)=\mathcal{T}_{(N_1,\,M)}f)(x)- \mathcal{T}_{(N_2+1,\,M)}f)(x),$$
where $-M\leq N_1< N_2\leq M$. Therefore, it is enough to prove the theorem for $\mathcal{T}_{(m,\,M)}$, where $|m|<M$.
\par Let $B_n :=B(x, a^{\frac{1}{2s}}_n)$ for any $n \in \mathbb{N}$ and 
\begin{eqnarray*}
  f=f \chi_{\mathcal{O}(B_m)}  + f \chi_{\mathcal{O}(B_m)^c}=: f_1+ f_2.  
\end{eqnarray*}
Hence, we can split
\begin{eqnarray*}
  |\mathcal{T}_{(m,\,M)}f(x)|& \leq &  |\mathcal{T}_{(m,\,M)}f_1(x)| +  |\mathcal{T}_{(m,\,M)}f_2(x)| \\
  &=:& I +II.
\end{eqnarray*}
\vspace{.5cm}
{\bf Estimate for $I$:}
\vspace{.5cm}

To estimate $I$, we first express it as 
\begin{eqnarray}\label{just one time use 1}
I &=&\Big|\int_{\mathbb{R}^d}\sum\limits_{j=m}^{M} v_j \Big(\tau^k_xh_{a_{j+1},s}(-y) - \tau^k_xh_{a_j,s}(-y) \Big)f_1(y)\, d\mu_k(y) \Big| \\
&\leq & \|\{v_j\}\|_{\ell^\infty(\mathbb{Z})}\int_{\mathbb{R}^d}\sum\limits_{j=m}^{M} \big|\tau^k_xh_{a_{j+1},s}(-y) - \tau^k_xh_{a_j,s}(-y) \big|\,|f_1(y)|\, d\mu_k(y)  \nonumber.
\end{eqnarray}
Now using Proposition \ref{esti for fractional Dunkl heat kernel} \ref{frac heat i}, for any $j \in \mathbb{Z}$ we get an estimate
\begin{eqnarray*}
\big|\tau^k_xh_{a_{j+1},s}(-y) - \tau^k_xh_{a_j,s}(-y) \big|& \leq & \tau^k_xh_{a_{j+1},s}(-y) +\tau^k_xh_{a_j,s}(-y) \\
& \lesssim & \frac{a_{j+1}}{\mu_k\big( B(x, a_{j+1}^{\frac{1}{2s}}\big)} \times \frac{1}{a_{j+1}} + \frac{a_{j}}{\mu_k\big( B(x, a_{j}^{\frac{1}{2s}}\big)} \times \frac{1}{a_{j}}\\
& \lesssim & \frac{1}{\mu_k\big( B(x, a_{j}^{\frac{1}{2s}}\big)}.
\end{eqnarray*}
Substituting this estimate into \eqref{just one time use 1}, we obtain the desired estimate for $I$ as follows.
\begin{eqnarray*}
 I &\lesssim &  \int_{\mathbb{R}^d}\sum\limits_{j=m}^{M}  \frac{1}{\mu_k\big( B(x, a_{j}^{\frac{1}{2s}}\big)}|f_1(y)|\, d\mu_k(y)\\  
 &= &  \sum\limits_{j=m}^{M}  \frac{1}{\mu_k\big( B(x, a_{j}^{\frac{1}{2s}}\big)}\int_{\mathcal{O}(B_m)}|f(y)|\, d\mu_k(y)\\  
 &\lesssim &    \frac{1}{\mu_k\big( B(x, a_{m}^{\frac{1}{2s}}\big)}\int_{\mathcal{O}(B_m)}|f(y)|\, d\mu_k(y)\sum\limits_{j=m}^{M} \Big(\frac{a_m}{a_j}\Big)^{\frac{d_k}{2s}}\\  
 &\leq & \sum \limits_{\sigma \in G}
    \frac{1}{\mu_k\big( B(\sigma(x), a_{m}^{\frac{1}{2s}}\big)}\int_{B_m}|f(\sigma(y))|\, d\mu_k(y)\sum\limits_{j=m}^{M} \ell^{\frac{-d_k}{2s}(-m+j)}\\  
    &\leq & \sum \limits_{\sigma \in G}\mathcal{M}f(\sigma(x))\sum\limits_{j=m}^{\infty}\ell^{\frac{-d_k}{2s}(-m+j)}\\
    &\lesssim & \sum \limits_{\sigma \in G}\mathcal{M}f(\sigma(x)).
\end{eqnarray*}

\vspace{.5cm}
{\bf Estimate for $II$:}
\vspace{.5cm}

To obtain the required estimates for the term $II$, we decompose it into four parts as follows.
\begin{eqnarray*}
  II &=& |\mathcal{T}_{(m,\,M)}f_2(x)|\\
  &=& \frac{1}{\mu_k\big(B(x, \frac{1}{2}a_{m-1}^{\frac{1}{2s}})\big)} \int_{B(x, \frac{1}{2}a_{m-1}^{\frac{1}{2s}})}|\mathcal{T}_{(m,\,M)}f_2(x)| \, d\mu_k(z)\\
  &\leq & \frac{1}{\mu_k\big(B(x, \frac{1}{2}a_{m-1}^{\frac{1}{2s}})\big)} \int_{B(x, \frac{1}{2}a_{m-1}^{\frac{1}{2s}})}|\mathcal{T}_{(-M,\,M)}f(z)| \, d\mu_k(z) \\
  && + \frac{1}{\mu_k\big(B(x, \frac{1}{2}a_{m-1}^{\frac{1}{2s}})\big)} \int_{B(x, \frac{1}{2}a_{m-1}^{\frac{1}{2s}})}|\mathcal{T}_{(-M,\,M)}f_1(z)| \, d\mu_k(z) \\
  && + \frac{1}{\mu_k\big(B(x, \frac{1}{2}a_{m-1}^{\frac{1}{2s}})\big)} \int_{B(x, \frac{1}{2}a_{m-1}^{\frac{1}{2s}})}|\mathcal{T}_{(m,\,M)}f_2(z)-\mathcal{T}_{(m,\,M)}f_2(x)| \, d\mu_k(z) \\
  && + \frac{1}{\mu_k\big(B(x, \frac{1}{2}a_{m-1}^{\frac{1}{2s}})\big)} \int_{B(x, \frac{1}{2}a_{m-1}^{\frac{1}{2s}})}|\mathcal{T}_{(-M,\,m-1)}f_2(z)| \, d\mu_k(z) \\
  &=:& A_1 + A_2+ A_3+ A_4,
\end{eqnarray*}
where $A_4=0$ if $m+1=-M$.
\par We estimate each of the four terms separately. Clearly,
$$A_1 \leq \mathcal{M}(\mathcal{T}_{(-M,\,M)}f)(x).$$

For $A_2$, by H\"older's inequality, Theorem \ref{bddness thm of partial sum} \ref{TN p}, and the triangle inequality, gives 
\begin{eqnarray*}
    A_2 &\leq & \frac{1}{\mu_k\big(B(x, \frac{1}{2}a_{m-1}^{\frac{1}{2s}})\big)} \Big(\mu_k\big(B(x, \frac{1}{2}a_{m-1}^{\frac{1}{2s}})\big)\Big)^{1/q'}\Big(\int_{\mathbb{R}^d}|\mathcal{T}_{(-M,\,M)}f_1(z)|^q \, d\mu_k(z) \Big)^{1/q}\\
    &\lesssim & \frac{1}{\big(\mu_k\big(B(x, \frac{1}{2}a_{m-1}^{\frac{1}{2s}})\big)\big)^{1/q}}\Big(\int_{\mathbb{R}^d}|f_1(z)|^q \, d\mu_k(z) \Big)^{1/q}\\
    &\leq & \Big(\frac{1}{\mu_k\big(B(x, \frac{1}{2}a_{m-1}^{\frac{1}{2s}})\big)}\int_{\mathcal{O}(B_m)}|f(z)|^q \, d\mu_k(z) \Big)^{1/q}\\
    &\lesssim & \Big(\sum\limits_{\sigma \in G}\frac{1}{\mu_k\big(B(\sigma(x), a_{m-1}^{\frac{1}{2s}})\big)}\int_{B_m}|f(\sigma(z))|^q \, d\mu_k(z) \Big)^{1/q}\\
    &\lesssim & \sum\limits_{\sigma \in G}\mathcal{M}_qf(\sigma(x)).
\end{eqnarray*}
We note that for $z\in B(x, \frac{1}{2}a^{\frac{1}{2s}}_{m-1})$, \eqref{smtness estimate last x changing} yields
\begin{eqnarray}\label{one time use 2}
 && |\mathcal{T}_{(m,\,M)}f_2(z)-\mathcal{T}_{(m,\,M)}f_2(x)| \\ 
 &\leq&  \int_{\mathbb{R}^d} |K_{(m, \, M)}(z,y)-K_{(m, \, M)}(x,y)| \, |f_2(y)|\, d\mu_k(y) \nonumber\\
 & \lesssim &  \int_{\mathcal{O}(B_m)^c} \frac{1}{\mu_k\big( B(z, d_G(z,y))\big)} \frac{|z-x|}{|z-y|}\, |f(y)|\, d\mu_k(y) \nonumber\\
 &\lesssim &  \frac{1}{2}a^{\frac{1}{2s}}_{m-1}\int_{\mathcal{O}(B_m)^c} \frac{1}{\mu_k\big( B(z, d_G(z,y))\big)} \frac{1}{|z-y|}\, |f(y)|\, d\mu_k(y) \nonumber\\
 &= &  \frac{1}{2}a^{\frac{1}{2s}}_{m-1}\sum\limits_{j=m}^\infty \int\limits_{a^{\frac{1}{2s}}_{j}<d_G(x,y) \leq a^{\frac{1}{2s}}_{j+1}} \frac{1}{\mu_k\big( B(z, d_G(z,y))\big)} \frac{1}{|z-y|}\, |f(y)|\, d\mu_k(y) \nonumber\\
 &\lesssim &  a^{\frac{1}{2s}}_{m-1}\sum\limits_{j=m}^\infty \int\limits_{d_G(x,y) \leq a^{\frac{1}{2s}}_{j+1}} \frac{2}{a^{\frac{1}{2s}}_{j}\,\mu_k\big( B(z, \frac{1}{2}a^{\frac{1}{2s}}_{j})\big)}  |f(y)|\, d\mu_k(y) \nonumber\\
 &\lesssim &  a^{\frac{1}{2s}}_{m-1}\sum\limits_{j=m}^\infty \int\limits_{d_G(x,y) \leq a^{\frac{1}{2s}}_{j+1}} \frac{1}{a^{\frac{1}{2s}}_{j}\,\mu_k\big( B(z, a^{\frac{1}{2s}}_{j})\big)}  |f(y)|\, d\mu_k(y) \nonumber\\
 &\lesssim & a^{\frac{1}{2s}}_{m-1}\sum\limits_{j=m}^\infty  \frac{1}{a^{\frac{1}{2s}}_{j}\,\mu_k\big( B(x, \frac{1}{2}a^{\frac{1}{2s}}_{j})\big)} \int_{\mathcal{O}(B_{j+1})} |f(y)|\, d\mu_k(y) \nonumber\\
 &\lesssim & \sum\limits_{\sigma \in G}\mathcal{M}f(\sigma(x))\sum\limits_{j=m}^\infty \Big(\frac{a_{m-1}}{a_j}\Big)^{\frac{1}{2s}} \nonumber\\
 &\lesssim & \sum\limits_{\sigma \in G}\mathcal{M}f(\sigma(x))\sum\limits_{j=m}^\infty \ell^{\frac{m-j-1}{2s}}\nonumber\\
 &\lesssim & \sum\limits_{\sigma \in G}\mathcal{M}f(\sigma(x)),\nonumber
 \end{eqnarray}
 where the domination by the maximal function follows in the same manner as in the estimate for $A_2$.
\par Now, from \eqref{one time use 2}, the estimate for $A_3$ follows immediately:
\begin{eqnarray*}
    A_3 &=& \frac{1}{\mu_k\big(B(x, \frac{1}{2}a_{m-1}^{\frac{1}{2s}})\big)} \int_{B(x, \frac{1}{2}a_{m-1}^{\frac{1}{2s}})}|\mathcal{T}_{(m,\,M)}f_2(z)-\mathcal{T}_{(m,\,M)}f_2(x)| \, d\mu_k(z) \\&\lesssim & \sum\limits_{\sigma \in G}\mathcal{M}f(\sigma(x)).
\end{eqnarray*}
For the estimate of $A_4$, we first use mean value theorem to deduce that
\begin{eqnarray*}
&& |\mathcal{T}_{(-M,\,m-1)}f_2(z)|\\
 &\lesssim & \int_{\mathbb{R}^d}\sum\limits_{j=-M}^{m-1} \big|\tau^k_zh_{a_{j+1},s}(-y) - \tau^k_zh_{a_j,s}(-y) \big|\, |f_2(y)|\, d\mu_k(y)\\
 &\lesssim & \int_{\mathcal{O}(B_m)^c}\sum\limits_{j=-M}^{m-1} (a_{j+1}-a_j)\big|\frac{\partial}{\partial t}\tau^k_zh_{t,s}(-y) \big|_{t=b_j}\, |f(y)|\, d\mu_k(y)\\
 &= & \sum \limits_{n=m}^\infty\int\limits_{a_n^{\frac{1}{2s}}\leq d_G(x, y)<a_{n+1}^{\frac{1}{2s}}}\sum\limits_{j=-M}^{m-1} (a_{j+1}-a_j)\big|\frac{\partial}{\partial t}\tau^k_zh_{t,s}(-y) \big|_{t=b_j}\, |f(y)|\, d\mu_k(y),
\end{eqnarray*}
where $a_j\leq b_j \leq a_{j+1}$. 
Now, to complete the estimate of $A_4$, we apply Proposition \ref{esti for fractional Dunkl heat kernel} \ref{frac heat ii} and the fact that $z\in B(x, \frac{1}{2}a_{m-1}^{\frac{1}{2s}})$ to conclude that
\begin{eqnarray*}
    &&|\mathcal{T}_{(-M,\,m-1)}f_2(z)|\\
 &\lesssim & \sum \limits_{n=m}^\infty\int\limits_{a_n^{\frac{1}{2s}}\leq d_G(x, y)<a_{n+1}^{\frac{1}{2s}}}\sum\limits_{j=-M}^{m-1} (\ell^{\, 2}-1)a_j\,\Big[ \frac{1}{\mu_k \big(B(z, d_G(z,y))\big)} \frac{1}{d_G(z,y)^{2s}} \Big]_{t=b_j}\, |f(y)|\, d\mu_k(y)\\
  &\lesssim & \sum \limits_{n=m}^\infty\int\limits_{a_n^{\frac{1}{2s}}\leq d_G(x, y)<a_{n+1}^{\frac{1}{2s}}}\sum\limits_{j=-M}^{m-1} \frac{a_j}{a_n}\, \frac{1}{\mu_k \big(B(z, \frac{1}{2}a_n^{\frac{1}{2s}})\big)} \, |f(y)|\, d\mu_k(y)\\
   &\lesssim & \sum \limits_{n=m}^\infty\int\limits_{d_G(x, y)<a_{n+1}^{\frac{1}{2s}}} \frac{1}{\mu_k \big(B(z, a_n^{\frac{1}{2s}})\big)} \, |f(y)|\, d\mu_k(y)\,\sum\limits_{j=-M}^{m-1} \frac{a_j}{a_n}\\
   &\lesssim & \sum \limits_{n=m}^\infty \frac{1}{\mu_k \big(B(x, \frac{1}{2}a_n^{\frac{1}{2s}})\big)} \int\limits_{\mathcal{O}(B_n)}  |f(y)|\, d\mu_k(y)\,\sum\limits_{j=-M}^{m-1} \ell^{\, j-n}\\
   &\lesssim & \sum\limits_{\sigma \in G}\mathcal{M}f(\sigma(x)) \sum \limits_{n=m}^\infty \ell^{-(n-m+1)}\\
    &\lesssim & \sum\limits_{\sigma \in G}\mathcal{M}f(\sigma(x)).
\end{eqnarray*}
Hence, the required estimate for $A_4$ follows in a similar way to that of $A_3$.
\end{proof}

\subsection{Proof of the boundedness of the maximal operator}\label{subsec- proof of bddness of maximal op}
Having proved the Cotlar-type inequality and Theorem \ref{bddness thm of partial sum}, we are now ready to prove our main theorem concerning the boundedness of the maximal operator.

\begin{proof}[Proof of Theorem \ref{main thm for maximal op} \ref{T star p}]
 We already know that $\mathcal{M}$ is bounded on $L^p(\mathbb{R}^d, wd\mu_k)$ (see \cite[p.5]{StrombergTorchinskybook}). Moreover, one can choose $1<q<p$ such that $w\in A^k_{p/q}$ (see \cite[p.10]{StrombergTorchinskybook}). It is then well known that $\mathcal{M}_q$ is also bounded on $L^p(\mathbb{R}^d, wd\mu_k)$. Since we have the inequality in Theorem \ref{cotlar ineq thm}, the operators $T_{\N}$ are uniformly bounded on $L^p(\mathbb{R}^d, wd\mu_k)$, and since both the measure and the weights are $G$-invariant, it follows that
 $$\Big(\int_{\mathbb{R}^d}(\mathcal{T}_M^*f(x))^p w(x)\, d\mu_k(x)  \Big)^{1/p}\leq C\, \Big(\int_{\mathbb{R}^d}|f(x)|^p w(x)\, d\mu_k(x)  \Big)^{1/p}.$$
 As $C$ does not depend on $M$, letting $M\to \infty$ completes the proof.
\end{proof}
\begin{proof}[Proof of Theorem \ref{main thm for maximal op} \ref{T star 1}]
 Let $\mathcal{T}f(x)= \{\mathcal{T}_{\N}f(x)\}_{\N\in \mathbb{Z}^2}$. Then $\mathcal{T}$ is an $\ell^\infty(\mathbb{Z}^2)$-valued operator. Now,
$$\|\mathcal{T}f(x)\|_{\ell^\infty(\mathbb{Z}^2)}=\mathcal{T}^*f(x),$$
and hence the first part of this theorem implies that $\mathcal{T}$ maps $L^p(\mathbb{R}^d, w\,d\mu_k)$ boundedly into $L^p(\mathbb{R}^d,\ell^\infty(\mathbb{Z}^2), w\,d\mu_k)$ for $1<p<\infty$ and $G$-invariant weights $w\in A^k_p$. Also, the vector-valued kernel associated with $\mathcal{T}$ is given by
$$\mathcal{K}(x,y)=\{K_{\N}(x,y)\}_{\N\in \mathbb{Z}^2},$$
which obviously satisfies the estimates \eqref{size estimate old}--\eqref{smtness estimate last x changing}, with the usual norm replaced by the $\ell^\infty(\mathbb{Z}^2)$ norm. So, by a straightforward extension of the Dunkl--Calder\'on--Zygmund theory to the vector-valued case, we obtain the proof of \ref{T star 1}.
\end{proof}
\begin{proof}[Proof of Theorem \ref{main thm for maximal op} \ref{T star l infty}]
  The proof will follow from  Theorem \ref{main thm for maximal op} \ref{T star BMO}, since we have
$$\|f\|_{\text{BMO}_{G,k}} \lesssim \|f\|_{L^\infty(d\mu_k)}.$$  
\end{proof}
 \begin{proof}[Proof of Theorem \ref{main thm for maximal op} \ref{T star BMO}]
Let $f \in \mathrm{BMO}_{G,k}(\mathbb{R}^d)$ and $x_0\in\mathbb{R}^d$ be such that $\mathcal{T}^*f(x_0)<\infty$. We show that $\mathcal{T}^*f(x)$ is finite almost everywhere. Let $B=B(x_0, 4\,|x-x_0|)$ be a ball in $\mathbb{R}^d$ and $2^j\widetilde{B}=B(x_0, 2^j|x-x_0|)$, where $x\neq x_0$. 
Now
\begin{eqnarray*}
    f& = & \big(f-f_{\mathcal{O}(B)}\big)\chi_{\mathcal{O}(B^*)} + \big(f-f_{\mathcal{O}(B)}\big)\chi_{\mathcal{O}(B)^c} + f_{\mathcal{O}(B)}\\
    & =: & f_1+ f_2 + f_3.
\end{eqnarray*}
\vspace{.5cm}
{\bf Estimate for $f_1$:}
\vspace{.5cm}
\par By an argument similar to the proof of Theorem \ref{bddness thm of partial sum} \ref{TN BMO}, we have
\begin{eqnarray*}
    &&\int_{\mathbb{R}^d}|f_1(x)|\, d\mu_k(x)\lesssim  \mu_k(B)\|f\|_{\text{BMO}_{G,k}}.
\end{eqnarray*}
Then from Theorem \ref{main thm for maximal op} \ref{T star 1}, it follows that $\mathcal{T}^*f_1$ is finite almost everywhere.

\vspace{.5cm}
{\bf Estimate for $f_2$:}
\vspace{.5cm}
\par Using \eqref{smtness estimate last x changing}, we write
\begin{eqnarray*}
   && |\mathcal{T}_{\N}f_2(x)-\mathcal{T}_{\N}f_2(x_0)| \\ 
 &\leq&  \int_{\mathcal{O}(B)^c} \frac{1}{\mu_k\big(B(x, d_G(x,y)\big)} \,\frac{|x-x_0|}{|x-y|}\, |f_2(y)|\, d\mu_k(y) \\
 &\leq&  \sum\limits_{j=1}^\infty\int\limits_{2^{j+1}|x-x_0| <d_G(x_0,y)\leq 2^{j+2}|x-x_0|} \frac{1}{\mu_k\big(B(x, d_G(x,y)\big)} \,\frac{|x-x_0|}{|x-y|}\, |f(y)-f_{\mathcal{O}(B)}|\, d\mu_k(y) \\
 &\leq&  \sum\limits_{j=1}^\infty \frac{1}{2^j}\frac{1}{\mu_k\big(B(x, 2^j|x-x_0|)\big)}\int\limits_{d_G(x_0,y)\leq 2^{j+2}|x-x_0|}  \, |f(y)-f_{\mathcal{O}(B)}|\, d\mu_k(y) \\
 &\leq&  \sum\limits_{j=1}^\infty \frac{1}{2^j}\frac{1}{\mu_k\big(B(x, 2^j|x-x_0|)\big)}\int_{\mathcal{O}(B(x_0,2^{j}|x-x_0|)}  \, |f(y)-f_{\mathcal{O}(B)}|\, d\mu_k(y) \\
  &\leq&  \sum\limits_{j=1}^\infty \frac{1}{2^j}\frac{1}{\mu_k\big(B(x, 2^j|x-x_0|)\big)}\Big[\int_{\mathcal{O}(2^j\widetilde{B})}  \, |f(y)-f_{\mathcal{O}(2^j\widetilde{B})}|\, d\mu_k(y) \\
  &&+ \mu_k(2^j\widetilde{B}) \sum\limits_{n=0}^{j-1}  |f_{\mathcal{O}(2^{\,n}\widetilde{B})}- f_{\mathcal{O}(2^{n+1}\widetilde{B})}|\Big]\\
  &\leq&  \sum\limits_{j=1}^\infty \frac{1}{2^j}\frac{1}{\mu_k\big(B(x_0, 2^{j+1}|x-x_0|)\big)}\mu_k(2^j\widetilde{B})\Big[\|f\|_{\text{BMO}_{G,k}} + \sum\limits_{n=0}^{j-1}2\,\|f\|_{\text{BMO}_{G,k}}\Big]\\
  &\lesssim& \|f\|_{\text{BMO}_{G,k}}\sum\limits_{j=1}^\infty 2^{-j}(1+2j)\\
   &\lesssim& \|f\|_{\text{BMO}_{G,k}}.
\end{eqnarray*}
Consequently, there exists $C>0$ such that for every $\N\in \mathbb{Z}^2$, $|\mathcal{T}_{\N}f_2(x)| < C$ for almost every $x\in \mathbb{R}^d$. 

\vspace{.5cm}
{\bf Estimate for $f_3$:}
\vspace{.5cm}

Since $\mathcal{T}_{\N}f_3(x)=0$ for all $\N\in \mathbb{Z}^2$, we have $\mathcal{T}^*f_3(x)=\|\mathcal{T}_{\N}f_3(x)\|_{\ell^\infty(\mathbb{Z}^2)}=0$.

\par Combining the estimates for $f_1$, $f_2$, and $f_3$,
\begin{eqnarray*}
\mathcal{T}^*f(x)& = & \|\mathcal{T}_{\N}f(x)\|_{\ell^\infty(\mathbb{Z}^2)}   \\
&\leq & \|\mathcal{T}_{\N}f_1(x)\|_{\ell^\infty(\mathbb{Z}^2)} + \|\mathcal{T}_{\N}f_2(x)\|_{\ell^\infty(\mathbb{Z}^2)}\\
&<& \infty
\end{eqnarray*}
for almost every $x\in \mathbb{R}^d$.  
\par Again, for the boundedness from $ \mathrm{BMO}_{G,k}(\mathbb{R}^d)$ to $ \mathrm{BMO}_{k}(\mathbb{R}^d)$, we can proceed in a similar way as in Theorem \ref{bddness thm of partial sum} \ref{TN BMO}. Let us consider an arbitrary ball $B\subseteq \mathbb{R}^d$, and choose $x_1\in B$ such that $|\mathcal{T}^*f_2(x_1)|<\infty$, where $f=f_1+f_2+f_3$ is the same decomposition as in the earlier part of the proof, now taken with respect to the new $B$, and let $c_B=\mathcal{T}^*f_2(x_1)$. Then we can proceed as 
\begin{eqnarray*}
 && \frac{1}{\mu_k(B)}\int_B\big| \mathcal{T}^*f(x)- c_B\big|\, d\mu_k(x)\\ 
 &= & \frac{1}{\mu_k(B)}\int_B\big|\|\mathcal{T}_{\N}f(x)\|_{\ell^\infty(\mathbb{Z}^2)}-\|\mathcal{T}_{\N}f_2(x_1)\|_{\ell^\infty(\mathbb{Z}^2)}\big|\, d\mu_k(x)\\
 &\leq & \frac{1}{\mu_k(B)}\int_B\|\mathcal{T}_{\N}f(x)-\mathcal{T}_{\N}f_2(x_1)\|_{\ell^\infty(\mathbb{Z}^2)}\, d\mu_k(x)\\
&\leq & \frac{1}{\mu_k(B)}\int_B\|\mathcal{T}_{\N}f_1(x)\|_{\ell^\infty(\mathbb{Z}^2)}\, d\mu_k(x)+ \frac{1}{\mu_k(B)}\int_B\| \mathcal{T}_{\N}f_2(x)- \mathcal{T}_{\N}f_2(x_1)\|_{\ell^\infty(\mathbb{Z}^2)}\, d\mu_k(x) .  \end{eqnarray*}
Hence, the rest of the proof can be carried out in a similar manner as in the proof of Theorem \ref{bddness thm of partial sum} \ref{TN BMO} and is therefore omitted.
\end{proof}

\subsection{Proof of the pointwise convergence results}\label{subsec- proof of pointwise conv}
This section is devoted to proving the pointwise convergence results as an application of Theorem \ref{main thm for maximal op}. We provide the proof below.
\begin{proof}[Proof of Theorem \ref{main thm point wise conv}]
Let $\varphi\in C^\infty_c(\mathbb{R}^d)$. We will first show that $ \mathcal{T}_{\N}\varphi(x)$ converges for all $x \in \mathbb{R}^d$.  It is enough to prove that for $0<L<M$, the two sequences   
$$A_{L,\,M}=\sum\limits_{j=L}^{M} v_j \Big(e^{-a_{j+1}(-\Delta_k)^s}\varphi(x) - e^{-a_j(-\Delta_k)^s} \varphi(x)\Big)$$
$$\text{and } B_{L,\,M}=\sum\limits_{j=-M}^{-L} v_j \Big(e^{-a_{j+1}(-\Delta_k)^s}\varphi(x) - e^{-a_j(-\Delta_k)^s} \varphi(x)\Big)$$
converges to $0$ as $L, M\to \infty$.

\vspace{.5cm}
{\bf Convergence of $A_{L,\,M}$:}
\vspace{.5cm}

In this case, by using the bounds for the fractional heat kernel in Proposition \ref{esti for fractional Dunkl heat kernel} \ref{frac heat i}, we directly obtain
\begin{eqnarray*}
|A_{L,\,M}|&=&\Big|\sum\limits_{j=L}^{M} v_j \Big(e^{-a_{j+1}(-\Delta_k)^s}\varphi(x) - e^{-a_j(-\Delta_k)^s} \varphi(x)\Big) \Big|\\
&\leq & \|\{v_j\}\|_{\ell^\infty(\mathbb{Z})}\sum\limits_{j=L}^{M}  \int_{\mathbb{R}^d} \big|\tau^k_x h_{a_{j+1},s} (-y)- \tau^k_x h_{a_{j},s} (-y)\big|\,|\varphi(y)|\, d\mu_k(y) \\
&\lesssim & \sum\limits_{j=L}^{M}\Big(\frac{1}{\mu_k(B(x, a_{j+1}^{\frac{1}{2s}}))}+ \frac{1}{\mu_k(B(x, a_{j}^{\frac{1}{2s}}))}\Big)\int_{\mathbb{R}^d} |\varphi(y)|\, d\mu_k(y) \\
&\lesssim & \|\varphi\|_{L^1(d\mu_k)}\sum\limits_{j=L}^{M} a_j^{-\frac{d_k}{2s}}\\
&= & \|\varphi\|_{L^1(d\mu_k)}\, a_L^{-\frac{d_k}{2s}}\sum\limits_{j=L}^{M} \Big(\frac{a_L}{a_j}\Big)^{\frac{d_k}{2s}}\\
&\leq & \|\varphi\|_{L^1(d\mu_k)}\, a_L^{-\frac{d_k}{2s}}\sum\limits_{j=L}^{\infty} \ell^{-\frac{d_k}{2s}(-L+j)}\\
&\lesssim & \|\varphi\|_{L^1(d\mu_k)}\, a_L^{-\frac{d_k}{2s}},
\end{eqnarray*}
which goes to $0$ as $L, M\to \infty$.

\vspace{.5cm}
{\bf Convergence of $B_{L,\,M}$:}
\vspace{.5cm}

For  $B_{L,\,M}$, we use the fact that $$\int_{\mathbb{R}^d}h_{t,s} (y)\, d\mu_k(y)=1$$ 
for any $t>0$, and then split the expression into two parts in the following way.
\begin{eqnarray*}
|B_{L,\,M}|&=&\Big|\sum\limits_{j=_M}^{-L} v_j \int_{\mathbb{R}^d} \big( h_{a_{j+1},s} (y)-  h_{a_{j},s} (y)\big)\,\tau^k_x\varphi(y)\, d\mu_k(y) \Big| \\
&=&\Big|\sum\limits_{j=-M}^{-L} v_j \int_{\mathbb{R}^d} \big( h_{a_{j+1},s} (y)-  h_{a_{j},s} (y)\big)\,\Big(\tau^k_x\varphi(-y)-\varphi(x)\big)\, d\mu_k(y) \Big| \\
&\lesssim & \sum\limits_{j=-M}^{-L}  \int_{B(0,1)} \big| h_{a_{j+1},s} (y)-  h_{a_{j},s} (y)\big|\,\big|\tau^k_{-y}\varphi(x)-\varphi(x)\big|\, d\mu_k(y) \\
&& + \sum\limits_{j=-M}^{-L}  \int_{B(0,1)^c} \big| h_{a_{j+1},s} (y)-  h_{a_{j},s} (y)\big|\,\big|\tau^k_{-y}\varphi(x)-\varphi(x)\big|\, d\mu_k(y) \\
&=:&I +II.
\end{eqnarray*}

Using estimates for fractional Dunkl heat kernel in Proposition \ref{esti for fractional Dunkl heat kernel} \ref{frac heat i}, mean value theorem and the polar decomposition \eqref{polar dec}, we can write
\begin{eqnarray*}
    I &=&  \sum\limits_{j=-M}^{-L}  \int_{B(0,1)} \big| h_{a_{j+1},s} (y)-  h_{a_{j},s} (y)\big|\,\big|\tau^k_{-y}\varphi(x)-\varphi(x)\big|\, d\mu_k(y)\\
    &\leq & \sum\limits_{j=-M}^{-L}  \int_{B(0,1)} \Big\{ \frac{a_{j+1}}{(a_{j+1}^{\frac{1}{2s}}+|y|)^{d_k+2s}} + \frac{a_{j}}{(a_{j}^{\frac{1}{2s}}+|y|)^{d_k+2s}} \Big\} |y|\, \|\mathcal{F}_k(\nabla_k \varphi)\|_{L^1(d\mu_k)}\, d\mu_k(y)\\
     &\lesssim & \|\mathcal{F}_k(\nabla_k \varphi)\|_{L^1(d\mu_k)}\, \sum\limits_{j=-M}^{-L}  \int_{B(0,1)}  \frac{a_{j}\, |y|}{(a_{j}^{\frac{1}{2s}}+|y|)^{d_k+2s}} \, d\mu_k(y)\\
     &\lesssim & \|\mathcal{F}_k(\nabla_k \varphi)\|_{L^1(d\mu_k)}\, \sum\limits_{j=-M}^{-L}  a_j \int_0^1  \frac{t^{d_k}}{(a_{j}^{\frac{1}{2s}}+t)^{d_k+2s}} \, dt\\
     &\leq & \|\mathcal{F}_k(\nabla_k \varphi)\|_{L^1(d\mu_k)}\, \sum\limits_{j=-M}^{-L}  a^{\frac{1}{2s}}_j \int_0^{a^{-\frac{1}{2s}}_j}  \frac{1}{(1+t)^{2s}} \, dt\\
     &\leq & \|\mathcal{F}_k(\nabla_k \varphi)\|_{L^1(d\mu_k)}\, \sum\limits_{j=-M}^{-L}  a^{\frac{1}{2s}}_j \int_0^{a^{-\frac{1}{2s}}_j}  \frac{1}{(1+t)^{s}} \, dt\\
     &\leq & \|\mathcal{F}_k(\nabla_k \varphi)\|_{L^1(d\mu_k)}\, \sum\limits_{j=-M}^{-L}  a^{\frac{1}{2s}}_j \int_0^{a^{-\frac{1}{2s}}_j}  t^{-s} \, dt\\
      &\leq & \|\mathcal{F}_k(\nabla_k \varphi)\|_{L^1(d\mu_k)}\,a_{-L}^{1/2} \sum\limits_{j=-M}^{-L}  \Big(\frac{a_j}{a_{-L}}\Big)^{1/2} \\
      &\leq & \|\mathcal{F}_k(\nabla_k \varphi)\|_{L^1(d\mu_k)}\,a_{-L}^{1/2} \sum\limits_{j=-L}^{\infty}  \Big(\frac{a_j}{a_{-L}}\Big)^{1/2} \\
      &\lesssim & \|\mathcal{F}_k(\nabla_k \varphi)\|_{L^1(d\mu_k)}\,a_{-L}^{1/2},
\end{eqnarray*}
which goes to $0$ as $L, M\to \infty$.
\par Similarly, combining the estimates for the fractional Dunkl heat kernel in Proposition \ref{esti for fractional Dunkl heat kernel} \ref{frac heat i} with the polar decomposition \eqref{polar dec}, we arrive at
\begin{eqnarray*}
 II &=&  \sum\limits_{j=-M}^{-L}  \int_{B(0,1)^c} \big| h_{a_{j+1},s} (y)-  h_{a_{j},s} (y)\big|\,\big|\tau^k_{-y}\varphi(x)-\varphi(x)\big|\, d\mu_k(y)\\
    &\leq & \sum\limits_{j=-M}^{-L}  \int_{B(0,1)^c} \Big\{ \frac{a_{j+1}}{(a_{j+1}^{\frac{1}{2s}}+|y|)^{d_k+2s}} + \frac{a_{j}}{(a_{j}^{\frac{1}{2s}}+|y|)^{d_k+2s}} \Big\} |y|\, \|\mathcal{F}_k(\nabla_k \varphi)\|_{L^1(d\mu_k)}\, d\mu_k(y)\\ 
    &\lesssim & \|\mathcal{F}_k(\nabla_k \varphi)\|_{L^1(d\mu_k)}\, \sum\limits_{j=-M}^{-L}  a_j \int_1^\infty  \frac{t^{d_k-1}}{(a_{j}^{\frac{1}{2s}}+t)^{d_k+2s}} \, dt\\
    &\leq & \|\mathcal{F}_k(\nabla_k \varphi)\|_{L^1(d\mu_k)}\, \sum\limits_{j=-M}^{-L}   \int_1^\infty  \frac{1}{\big(1+{t}/{ a_{j}^{\frac{1}{2s}}}\big)^{2s}} \, \frac{dt}{t}\\
     &\leq & \|\mathcal{F}_k(\nabla_k \varphi)\|_{L^1(d\mu_k)}\, \sum\limits_{j=-M}^{-L}   \int_{a_{j}^{-\frac{1}{2s}}}^\infty  t^{-2s-1}\, dt\\
     &\leq & \|\mathcal{F}_k(\nabla_k \varphi)\|_{L^1(d\mu_k)}\,a_{-L} \sum\limits_{j=-M}^{-L}\frac{a_j}{a_{-L}}\\
     &\leq & \|\mathcal{F}_k(\nabla_k \varphi)\|_{L^1(d\mu_k)}\,a_{-L},
\end{eqnarray*}
which vanishes in the limit as $L, M\to \infty$.
\par Since the space of smooth functions with compact support is dense in $L^p(\mathbb{R}^d, d\mu_k)$ for $1\leq p<\infty$, Theorem \ref{main thm for maximal op} \ref{T star p} and \ref{T star 1} yield almost everywhere convergence for every function in $L^p(\mathbb{R}^d, d\mu_k)$. Likewise, because $L^p(\mathbb{R}^d, d\mu_k)\cap L^p(\mathbb{R}^d,w\,d\mu_k)$ is dense in $L^p(\mathbb{R}^d,w\,d\mu_k)$, the same almost everywhere convergence property remains valid for all functions in $L^p(\mathbb{R}^d,w\,d\mu_k)$ whenever $1\leq p<\infty$. Furthermore, for $1<p<\infty$, an application of the dominated convergence theorem yields convergence in the $L^p(\mathbb{R}^d,w\,d\mu_k)$ norm; and this also implies convergence in $w\,d\mu_k$-measure.
\end{proof}
\subsection{Proof of the local growth of the maximal operator}\label{subsec- proof of local growth}
We are now at the final part of the article, namely the proof of Theorem \ref{main thm local growth}, which is presented next.
\begin{proof}[Proof of Theorem \ref{main thm local growth}]
We first consider the case $1<p<\infty$. Let
$f(x)=f_1(x) + f_2(x)$, where we define
$$f_1=f_{\chi_{B(0,2r)}} \text{ and }f_2=f_{\chi_{B(0,1)-B(0,2r)}}.$$
Using H\"older's inequality and Theorem \ref{main thm for maximal op} \ref{T star p}, we obtain a bound
\begin{eqnarray}\label{one time use log}
 \frac{1}{\mu_k(B(0,r))}\int_{B(0,r)} \mathcal{T}^*f_1(x) \, d\mu_k(x) &\leq & \Big( \frac{1}{\mu_k(B(0,r))}\int_{B(0,r)} (\mathcal{T}^*f_1(x))^2  d\mu_k(x) \Big)^{1/2}  \\
 &\lesssim & \Big( \frac{1}{\mu_k(B(0,r))}\int_{B(0,r)} |f_1(x)|^2  d\mu_k(x) \Big)^{1/2} \nonumber \\
 &\lesssim & \|f_1\|_{L^\infty(d\mu_k)}\leq \|f\|_{L^\infty(d\mu_k)}.\nonumber
\end{eqnarray}

Again, applying H\"older's inequality first on $\ell^p(\mathbb{Z}^2)$ and then on $L^p(\mathbb{R}^d,d\mu_k)$, together with the bounds for the fractional Dunkl heat kernels Proposition \ref{esti for fractional Dunkl heat kernel} \ref{frac heat ii}, we obtain
\begin{eqnarray*}
 && \Big|\sum\limits_{j=N_1}^{N_2} v_j \Big(e^{-a_{j+1}(-\Delta_k)^s}f_2(x) - e^{-a_j(-\Delta_k)^s} f_2(x)\Big)  \Big|\\
 & \leq & \sum\limits_{j=N_1}^{N_2} \Big|v_j \int_{\mathbb{R}^d}\big(\tau^k_x h_{a_{j+1},s} (-y)- \tau^k_x h_{a_{j},s} (-y)\big)\, f_2(y)\, d\mu_k(y)\Big|\\
 & \leq &  \|\{v_j\}\|_{\ell^p(\mathbb{Z})} \Big[\sum\limits_{j=N_1}^{N_2}  \big(\int_{\mathbb{R}^d}\big|\tau^k_x h_{a_{j+1},s} (-y)- \tau^k_x h_{a_{j},s} (-y)\big|\, |f_2(y)|\, d\mu_k(y)\big)^{p'}
 \Big]^{1/p'}\\
  & \leq &  \|\{v_j\}\|_{\ell^p(\mathbb{Z})} \Big[\sum\limits_{j=N_1}^{N_2}  \Big(\int_{\mathbb{R}^d}\big|\tau^k_x h_{a_{j+1},s} (-y)- \tau^k_x h_{a_{j},s} (-y)\big|\, |f_2(y)|^{p'}\, d\mu_k(y)\Big)\\
  && \times \Big(\int_{\mathbb{R}^d}\big|\tau^k_x h_{a_{j+1},s} (-y)- \tau^k_x h_{a_{j},s} (-y)\big|\, d\mu_k(y)\Big)^{p'/p}
 \Big]^{1/p'}\\
 & \leq &  \|\{v_j\}\|_{\ell^p(\mathbb{Z})} \Big[\sum\limits_{j=N_1}^{N_2}  \Big(\int_{\mathbb{R}^d}\big|\tau^k_x h_{a_{j+1},s} (-y)- \tau^k_x h_{a_{j},s} (-y)\big|\, |f_2(y)|^{p'}\, d\mu_k(y)\Big)^{p'}\\
  && \times \Big(\int_{\mathbb{R}^d}\tau^k_x h_{a_{j+1},s} (-y)\, d\mu_k(y)+ \int_{\mathbb{R}^d}\tau^k_x h_{a_{j},s} (-y)\, d\mu_k(y)\Big)^{p'/p}
 \Big]^{1/p'}\\
  & \leq & 2^{1/p}\, \|\{v_j\}\|_{\ell^p(\mathbb{Z})} \Big[ \int_{\mathbb{R}^d}\sum\limits_{j=-\infty}^{\infty} \big|\tau^k_x h_{a_{j+1},s} (-y)- \tau^k_x h_{a_{j},s} (-y)\big|\, |f_2(y)|^{p'}\, d\mu_k(y) \Big]^{1/p'}\\
  & \lesssim & \|\{v_j\}\|_{\ell^p(\mathbb{Z})} \Big[ \int_{\mathbb{R}^d}\int_{0}^{\infty}\big|\frac{\partial}{\partial t}\tau^k_x h_{t,s} (-y)\big|\,dt\, |f_2(y)|^{p'}\, d\mu_k(y) \Big]^{1/p'}\\
   & \lesssim &  \|\{v_j\}\|_{\ell^p(\mathbb{Z})} \Big[ \int_{\mathbb{R}^d}\int_{0}^{\infty}\frac{dt}{\big(d_G(x,y) + t^{\frac{1}{2s}}\big)^{d_k+2s}}\, |f_2(y)|^{p'}\, d\mu_k(y) \Big]^{1/p'}\\
   & \lesssim &  \|\{v_j\}\|_{\ell^p(\mathbb{Z})} \Big[ \int_{\mathbb{R}^d} \frac{|f_2(y)|^{p'}}{\big( d_G(x,y)\big)^{d_k} }\, d\mu_k(y) \Big]^{1/p'}.
\end{eqnarray*}
Now, the last estimate and the polar decomposition \eqref{polar dec} yields
\begin{eqnarray}\label{one time use log2}
 && \frac{1}{\mu_k(B(0,r))}\int_{B(0,r)} \mathcal{T}^*f_2(x) \, d\mu_k(x)\\
 &\lesssim &   \frac{\|\{v_j\}\|_{\ell^p(\mathbb{Z})}}{\mu_k(B(0,r))} \int_{B(0,r)}\Big[ \int_{\mathbb{R}^d} \frac{|f_2(y)|^{p'}}{\big( d_G(x,y)\big)^{d_k} }\, d\mu_k(y) \Big]^{1/p'}d\mu_k(x) \nonumber\\
  &\leq &   \frac{\|\{v_j\}\|_{\ell^p(\mathbb{Z})}\|f_2\|_{L^\infty(d\mu_k)}}{\mu_k(B(0,r))} \int_{B(0,r)}\Big[ \int\limits_{2r<|y|<1} \frac{1}{\big( d_G(x,y)\big)^{d_k} }\, d\mu_k(y) \Big]^{1/p'}d\mu_k(x) \nonumber\\
  &\leq &   \frac{\|\{v_j\}\|_{\ell^p(\mathbb{Z})}\|f_2\|_{L^\infty(d\mu_k)}}{\mu_k(B(0,r))} \int_{B(0,r)}\Big[ \int\limits_{2r<|y|<1} \frac{2^{d_k}}{|y|^{d_k} }\, d\mu_k(y) \Big]^{1/p'}d\mu_k(x) \nonumber\\
  &\lesssim &   \frac{\|\{v_j\}\|_{\ell^p(\mathbb{Z})}\|f_2\|_{L^\infty(d\mu_k)}}{\mu_k(B(0,r))} \int_{B(0,r)}\Big[ \int\limits_{r<|y|<2} \frac{1}{|y|^{d_k} }\, d\mu_k(y) \Big]^{1/p'}d\mu_k(x) \nonumber\\
  &\lesssim &   \frac{\|\{v_j\}\|_{\ell^p(\mathbb{Z})}\|f_2\|_{L^\infty(d\mu_k)}}{\mu_k(B(0,r))} \int_{B(0,r)}\Big[ \int_r^2 \frac{dt}{t} \Big]^{1/p'}d\mu_k(x) \nonumber\\
  &\lesssim & \|\{v_j\}\|_{\ell^p(\mathbb{Z})}\Big(\log \frac{2}{r}\Big)^{1/p'}\|f\|_{L^\infty(d\mu_k)}.\nonumber
\end{eqnarray}
Since $\log \frac{2}{r}>1$, it follows from \eqref{one time use log} and \eqref{one time use log2} that 
\begin{eqnarray*}
    \frac{1}{\mu_k(B(0,r))}\int_{B(0,r)} \mathcal{T}^*f(x) & \leq &  \frac{1}{\mu_k(B(0,r))}\int_{B(0,r)} \mathcal{T}^*f_1(x) + \frac{1}{\mu_k(B(0,r))}\int_{B(0,r)} \mathcal{T}^*f_2(x)\\
    &\lesssim & \Big(\log \frac{2}{r}\Big)^{1/p'}\|f\|_{L^\infty(d\mu_k)}. 
\end{eqnarray*}
The proofs for the other two cases, namely $p=1$ and $p=\infty$, are similar, and in fact simpler than the present case; therefore, they are not included in this article.
\end{proof}

\subsection*{Acknowledgments} 
The author is supported by Institute Postdoctoral Fellowship from IIT Bombay.

\subsection*{Data availability}
Data availability is not applicable.

\subsection*{Competing interests}
The author declares that he has no competing interests.

\bibliographystyle{abbrv}

\bibliography{biblio}

\end{document}